\theoremstyle{plain}
\newtheorem{theorem}[subsection]{Theorem}
\newtheorem{proposition}[subsection]{Proposition}
\newtheorem{corollary}[subsection]{Corollary}	
\newtheorem{lemma}[subsection]{Lemma}
\theoremstyle{definition}
\newtheorem{definition}[subsection]{Definition}
\theoremstyle{remark}
\newtheorem{remark}[subsection]{Remark}
\newtheorem{example}[subsection]{Example}
\newtheorem{examples}[subsection]{Examples}
\newtheorem{problem}[subsection]{Open problem}
\numberwithin{equation}{section}
\newenvironment{tfae}
{
\begin{enumerate}}
{\end{enumerate}}
\newcommand{\defn}{\textbf}
\newcommand{\noproof}{\hfil\qed}
\newcommand{\bigjoin}{\bigvee}
\newcommand{\join}{\vee}
\newcommand{\meet}{\wedge}
\newcommand{\cosmash}{\diamond}
\newcommand{\mus}[2]{\left\lgroup #1 \; #2 \right\rgroup}
\newcommand{\normal}{\ensuremath{\lhd}}
\DeclareMathOperator*{\colim}{Colim}
\DeclareMathOperator{\cod}{Cod}
\DeclareMathOperator{\Ker}{Ker}
\DeclareMathOperator{\Aut}{Aut}
\newcommand{\B}{\ensuremath{\mathcal{B}}}
\newcommand{\C}{\ensuremath{\mathcal{C}}}
\newcommand{\D}{\ensuremath{\mathcal{D}}}
\newcommand{\Act}{\ensuremath{\mathsf{Act}}}
\newcommand{\Alg}{\ensuremath{\mathsf{Alg}}}
\newcommand{\Arr}{\ensuremath{\mathsf{Arr}}}
\newcommand{\Cat}{\ensuremath{\mathsf{Cat}}}
\newcommand{\Eq}{\ensuremath{\mathsf{Eq}}}
\newcommand{\Ext}{\ensuremath{\mathsf{Ext}}}
\newcommand{\CExt}{\ensuremath{\mathsf{CExt}}}
\newcommand{\Gp}{\ensuremath{\mathsf{Gp}}}
\newcommand{\Mon}{\ensuremath{\mathsf{Mon}}}
\newcommand{\PXMod}{\ensuremath{\mathsf{PXMod}}}
\newcommand{\RG}{\ensuremath{\mathsf{RG}}}
\newcommand{\SPt}{\ensuremath{\mathsf{SPt}}}
\newcommand{\XMod}{\ensuremath{\mathsf{XMod}}}
\newcommand{\LACC}{{\rm (LACC)}}
\newcommand{\K}{\mathbb{K}}
\newcommand{\N}{\mathbb{N}}
\newcommand{\V}{\mathcal{V}}
\newcommand{\Z}{\mathbb{Z}}
\newcommand{\Pf}{\mathcal{P}_{\text{Fin}}}
\renewcommand{\Im}{\mathrm{Im}}
\newcommand{\im}{\mathrm{im}}
\newcommand{\Sub}[2]{\textnormal{$\mathsf{Sub}_{#2}(#1)$}}
\newcommand{\act}[1]{\textnormal{$#1$-$\mathsf{Act}$}}
\newcommand{\PT}[1]{\textnormal{$\mathsf{Pt}(#1)$}}
\newcommand{\Pt}[2]{\textnormal{$\mathsf{Pt}_{#2}(#1)$}}
\newcommand{\SplExt}[2]{\textnormal{$\mathsf{SplExt}_{#2}(#1)$}}
\newcommand{\comp}{\circ}
\begin{document}

\title{Algebraic logoi}

\author{D.\ Bourn}
\author{A.\ S.\ Cigoli}
\author{J.\ R.\ A.\ Gray}
\author{T.~Van~der Linden}

\email{dominique.bourn@univ-littoral.fr}
\email{alan.cigoli@unito.it}
\email{jamesgray@sun.ac.za}
\email{tim.vanderlinden@uclouvain.be}

\address[Dominique Bourn]{Univ.\ Littoral C\^ote d'Opale, UR 2597, LMPA, Laboratoire de Math\'ematiques Pures et Appliqu\'ees Joseph Liouville, F--62100 Calais, France}
\address[Alan S.\ Cigoli]{Universit\`a degli Studi di Torino, Dipartimento di Matematica ``G.\ Peano'', via Carlo Alberto 10, I--10123 Torino, Italy}
\address[James R.\ A.\ Gray]{Mathematics Division, Department of Mathematical Sciences, Stellenbosch University, Private Bag X1, 7602 Matieland, South Africa}
\address[Tim Van~der Linden]{Institut de Recherche en Math\'ematique et Physique, Universit\'e catholique de Louvain, che\-min du cyclotron~2 bte~L7.01.02, B--1348 Louvain-la-Neuve, Belgium}

\thanks{The fourth author is a Senior Research Associate of the Fonds de la Recherche Scientifique--FNRS}

\begin{abstract}
	We introduce \emph{normal cores}, as well as the more general \emph{action cores}, in the context of a semi-abelian category, and further generalise those to \emph{split extension cores} in the context of a homological category. We prove that, if the category is moreover well-powered with (small) joins, then the existence of split extension cores is equivalent to the condition that the change-of-base functors in the fibration of points are \emph{geometric}. We call a finitely complete category that satisfies this condition an \emph{algebraic logos}. We give examples of such categories, compare them with algebraically coherent ones, and study equivalent conditions as well as stability under common categorical operations.
\end{abstract}

\subjclass[2020]{18E13, 18B25, 20F12, 17A99, 08C05}
\keywords{Geometric functor; homological, semi-abelian, algebraically coherent category; normal core, action core, split extension core.}

\maketitle

\section{Introduction}
The aim of this article is to introduce \emph{normal cores}, \emph{action cores} and \emph{split extension cores} in the context of homological and semi-abelian categories~\cite{Janelidze-Marki-Tholen,Borceux-Bourn}. For this to work well, we require the setting of what we call an \emph{algebraic logos}: a finitely complete category of which every change-of-base functor in the fibration of points is \emph{geometric}. Recall~\cite{Johnstone:Elephant} that such is a functor which preserves finite limits and jointly extremally epimorphic (small) families of arrows. This extends the definition of a \emph{logos} or \emph{geometric category} from topos theory~\cite{Alligators,Johnstone:Elephant} where the same is asked of the change-of-base functors in the basic fibration of a regular category.

We give examples and study the stability of the condition, which is a strengthening of the concept of an \emph{algebraically coherent} category~\cite{acc}. There, the change-of-base functors in the fibration of points are merely \emph{coherent}---jointly extremally epimorphic \emph{pairs} of arrows are preserved. Trivially, all categorical-algebraic consequences of the latter condition remain valid here: see Theorem~\ref{Theorem: semi-abelian consequences of existence of action cores} for an overview.

In any variety of universal algebras, the two concepts agree (Theorem~\ref{Theorem Coherent iff Geometric for Varieties}). Here we may show that \emph{normal cores} always exist (Theorem~\ref{Thm directed colimits}). In the abstract setting where our main interest lies---of a well-powered semi-abelian category with joins---they are generally different. It is here that we study the more general \emph{action cores} and \emph{split extension cores}, showing that their existence is equivalent to the category being an algebraic logos (Theorem~\ref{Main theorem}). We further prove (Theorem~\ref{Theorem (FWACC)}) that any such algebraic logos is a \emph{fibrewise algebraically cartesian closed} category; as a consequence, it has centralisers.

\subsection{Normal cores.}
The \defn{normal core} of a subgroup $S\leq X$ is the largest normal subgroup $N$ of~$X$ contained in $S$, which may be obtained as the join of all normal subgroups of $X$ smaller than $S$. This concept from group theory immediately extends to the general context of semi-abelian categories~\cite{Janelidze-Marki-Tholen}, and so does the construction of normal cores, as soon as the category in question is well-powered and joins of normal subobjects are normal. As we shall see in Section~\ref{Section Normal Cores}, both conditions are automatically satisfied in semi-abelian varieties of algebras; interestingly, in an abstract semi-abelian category, \emph{binary} joins of normal subobjects are always normal, whereas the condition on arbitrary collections happens to be equivalent to the existence of normal cores (Proposition~\ref{Proposition Normal cores vs joins}).

\subsection{Action cores.}
By definition, a normal subgroup $N$ of a group $X$ is a subgroup of $X$ which is closed under the conjugation action of $X$ on itself. A major goal of this article is to study what happens when we extend the definition of a normal core from the conjugation action to arbitrary actions: to define \emph{action cores}\footnote{This has nothing to do with the concept introduced in \cite{Actions} which carries the same name.} in semi-abelian categories, and to determine under which conditions on the category, action cores always exist. (See Section~\ref{Section Action Cores} where this is done in detail.) In particular, of course, any semi-abelian category with action cores has normal cores, but a major difference between the two is that a semi-abelian variety of algebras may fail to have action cores in general. Indeed, Proposition~\ref{Prop Action cores vs alg logos} tells us that a well-powered semi-abelian category with joins has action cores if and only if it is an algebraic logos.

\subsection{Split extension cores.}
The proof of Proposition~\ref{Prop Action cores vs alg logos} depends on the equivalence between actions and split extensions in the semi-abelian context, which allows us to reformulate the concept of an action core in terms of split extensions: thus we obtain the definition of a \emph{split extension core}, which makes sense in a wider setting and is more directly related to the concept of a geometric functor.

In a pointed category with finite limits $\C$, given a split extension $(\alpha,\beta,\kappa)$ as in the bottom of the diagram
\begin{equation*}
	\vcenter{
	\xymatrix{
	\bar X \ar@{-->}[r]^{\bar \kappa}\ar@{-->}[d]_{u} & \bar A \ar@{-->}@<0.5ex>[r]^{\bar \alpha}\ar@{-->}[dd]^{v} & B\ar@{-->}@<0.5ex>[l]^{\bar\beta}\ar@{=}[dd]\\
	S \ar@{{ >}->}[d]_{s}& & \\
	X \ar[r]_{\kappa} & A \ar@<0.5ex>[r]^{\alpha} & B\ar@<0.5ex>[l]^{\beta}
	}
	}
\end{equation*}
and a monomorphism $s\colon{S\to X}$, a \defn{split extension core} of $s$ relative to $(\alpha,\beta,\kappa)$ is a universal (terminal) lifting as displayed with dashed arrows above. When for each split extension $(\alpha, \beta, \kappa)$ and each $S\leq X$ a split extension core exists, we say that \defn{$\C$ has split extension cores}.

Alternatively, split extension cores may be viewed in terms of adjoint functors; this is an instance of a more general concept (of a \emph{subterminal partial right adjoint}) which is still under development: see Proposition~\ref{Proposition Action core via adjoint}.

\subsection{Structure of the text.}
We start in Section~\ref{Section Geom Functors} with a review of geometric functors and their properties. This leads to the definition of an algebraic logos in Section~\ref{Section Algebraic Logoi}, where we study the concept's main properties, equivalent formulations, etc. In Section~\ref{Section Split Extension Cores} we introduce split extension cores and relate them to algebraic logoi. Section~\ref{Section Action Cores} reformulates everything in terms of internal actions. In Section~\ref{Section Normal Cores} we specialise to normal cores, with the aim of proving that they exist in any semi-abelian variety of algebras. Section~\ref{Section Equivalent conditions} gives an overview of the relations between all of the concepts that appear in this article, Section~\ref{Section Examples} is devoted to examples and stability conditions (which may be used to construct further examples), and Section~\ref{Section Internal Groups} focuses on the case of internal groups in a geometric category.

\section{Geometric functors, geometric categories}\label{Section Geom Functors}
Recall that a (set-indexed) family of arrows $(u_i\colon U_i\to X)_{i\in I}$ is called \defn{jointly extremally epimorphic} when any monomorphism $u\colon {U\to X}$ through which the family $({u_i})_{i\in I}$ factors (as a collection of arrows $(\bar u_i\colon {U_i\to U})_{i\in I}$ so that $u\comp \bar u_i=u_i$) is an isomorphism.

\begin{definition}\label{Definition Geometric Functor}
	A functor between categories with finite limits is said to be \defn{geometric} when it preserves finite limits and jointly extremally epimorphic (small) families of arrows.
\end{definition}

The next proposition shows that for regular categories, this definition coincides with the one implicit in Section~A.1.4 of~\cite{Johnstone:Elephant}.

\begin{definition}
	Given an object $X$ and a small family of subobjects $(U_i\leq X)_{i\in I}$, a~subobject $U\leq X$ is called the \defn{join} of the $U_i$ in $X$ if and only if $U$ is the smallest subobject of $X$ such that $U_i\leq U$ for all $i\in I$. We write $U=\bigvee_{i\in I}U_i\leq X$.
\end{definition}

When $u_i\colon U_i\to X$ represents $U_i\leq X$ and $u\colon U\to X$ represents $U\leq X$, this means that
\begin{enumerate}
	\item each $u_i$ factors through $u$ via a monomorphism $\bar u_i\colon {U_i\to U}$ such that $u\comp \bar u_i=u_i$ and
	\item the family $(\bar u_i)_{i\in I}$ is jointly extremally epimorphic.
\end{enumerate}
In particular, a family of monomorphisms $(u_i\colon U_i\to X)_{i\in I}$ is jointly extremally epimorphic if and only if (considered as a family of subobjects $(U_i\leq X)_{i\in I}$) its join is $X$. 

\begin{definition}
	A finitely complete category \defn{has (small) joins} if for each small family of subobjects of a given object the join exists.
\end{definition}

Two well-known criteria for the existence of joins are relevant to us: any regular category with sums has small joins; so does any complete well-powered category. In the first case, we may consider a small family of subobjects $(U_i\leq X)_{i\in I}$ together with a chosen collection $(u_i\colon U_i\to X)_{i\in I}$ of representing monomorphisms. Then the image of the induced arrow $\lgroup u_i\rgroup_{i\in I}\colon \coprod_{i\in I} U_i \to X$ represents the join $\bigvee_{i\in I}U_i\leq X$. In the second case, any family of distinct subobjects $(U_i\leq X)_{i\in I}$ is necessarily small: $I$ is a set. Here we may use that a complete meet-semilattice is necessarily a complete lattice. We may now say that such a category has \emph{all} joins. (We no longer have to worry about the size of the indexing class~$I$, and may drop the word ``small''.)

Proposition~\ref{Proposition geom vs coh} expresses the relationship between geometric functors and the weaker notion of a coherent functor. Recall that a functor is \defn{coherent} when it preserves finite limits and jointly extremally epimorphic \emph{pairs} of arrows. It is easily seen---a detailed explanation is given in~\cite{acc}---that any coherent functor between regular categories is \defn{regular}, which means that it preserves finite limits and regular epimorphisms. In particular, it preserves image factorisations.

\begin{proposition}\label{Proposition joins}
	A regular functor between regular categories is geometric if and only if it preserves small joins.
\end{proposition}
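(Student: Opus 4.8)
The plan is to reduce everything to the interplay between jointly extremally epimorphic families and joins of images. Since a regular functor $F$ preserves finite limits and, as recalled above, image factorisations, the asserted equivalence reduces to showing that such an $F$ preserves jointly extremally epimorphic small families if and only if it preserves small joins. I would first record the elementary bridge lemma: in a regular category, a small family $(u_i\colon U_i\to X)_{i\in I}$ is jointly extremally epimorphic if and only if the family of its images $(\Im(u_i)\leq X)_{i\in I}$ is so, which by the remark following the definition of a join means exactly that $\bigvee_{i\in I}\Im(u_i)=X$. This lemma is immediate, since $u_i$ factors through a monomorphism $v\colon V\rightarrowtail X$ precisely when $\Im(u_i)\leq\Im(v)=V$; so a monomorphism is factored by all the $u_i$ if and only if it is factored by all the $\Im(u_i)$.

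For the direction ``geometric $\Rightarrow$ preserves small joins'', I would take a small family of subobjects $(U_i\leq X)_{i\in I}$ admitting a join $U=\bigvee_{i\in I}U_i$, represented by monomorphisms $u_i\colon U_i\rightarrowtail X$ and $u\colon U\rightarrowtail X$, so that $u_i=u\comp\bar u_i$ for monomorphisms $\bar u_i\colon U_i\rightarrowtail U$ with $(\bar u_i)_{i\in I}$ jointly extremally epimorphic. Applying $F$, which preserves monomorphisms (being finitely continuous) and jointly extremally epimorphic small families (being geometric), one gets monomorphisms $F(u),F(u_i),F(\bar u_i)$ with $F(u)\comp F(\bar u_i)=F(u_i)$ and $(F(\bar u_i))_{i\in I}$ jointly extremally epimorphic. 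These are precisely the two conditions for $F(u)\colon F(U)\rightarrowtail F(X)$ to represent $\bigvee_{i\in I}F(U_i)$, so $F$ preserves the join.

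For the converse, assume $F$ preserves small joins and let $(u_i\colon U_i\to X)_{i\in I}$ be jointly extremally epimorphic. I would take image factorisations $u_i=m_i\comp e_i$ with $e_i\colon U_i\twoheadrightarrow M_i$ a regular epimorphism and $m_i\colon M_i\rightarrowtail X$ representing $\Im(u_i)$. By the bridge lemma, $\bigvee_{i\in I}\Im(u_i)=X$. Since $F$ is regular it sends this to the image factorisation $F(u_i)=F(m_i)\comp F(e_i)$, whence $\Im(F(u_i))=F(\Im(u_i))$ as subobjects of $F(X)$. As $F$ preserves the join $\bigvee_{i\in I}\Im(u_i)=X$, we get $\bigvee_{i\in I}\Im(F(u_i))=\bigvee_{i\in I}F(\Im(u_i))=F(X)$; so the family $(F(m_i))_{i\in I}$ of monomorphisms is jointly extremally epimorphic, and therefore so is $(F(u_i))_{i\in I}$, by the bridge lemma applied in $\D$. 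Together with preservation of finite limits, this makes $F$ geometric.

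The only genuinely delicate points are bookkeeping ones: confirming that ``regular'' yields preservation of image factorisations in the strong sense that $\Im(F(u_i))=F(\Im(u_i))$ as subobjects of $F(X)$, and checking the elementary bridge lemma that a family and the family of its images are simultaneously jointly extremally epimorphic. Once these are in place, both implications are a mechanical translation through the dictionary ``jointly extremally epimorphic family $\leftrightarrow$ join of images is the top subobject'', and no smallness difficulty arises since the family of images is indexed by the same set $I$.
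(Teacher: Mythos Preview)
Your proof is correct and follows essentially the same approach as the paper's: both rely on the bridge lemma that a family is jointly extremally epimorphic if and only if the family of its images is, together with the fact that a regular functor preserves image factorisations. The paper's proof simply states these two facts and declares that the result follows, whereas you have spelled out both implications in full; your expansion is accurate and matches what the paper leaves implicit.
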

\begin{proof}
	A family of arrows $(u_i\colon U_i\to X)_{i\in I}$ in a regular category is jointly extremally epimorphic if and only if the family of images $(\im(u_i)\colon \Im(u_i)\to X)_{i\in I}$ is jointly extremally epimorphic. Furthermore, any geometric functor preserves (regular epi, mono)-factorisations. The result follows.
\end{proof}

Recall that a \defn{directed colimit} is a colimit of a \defn{directed diagram}: a preorder (i.e., a relation which is reflexive and transitive) in which every finite subset has an upper bound. We notice that the join of a directed family $(U_i\leq X)_{i\in I}$ of subobjects of an object~$X$ may be obtained through a directed colimit: it is represented by the image of the arrow $\colim_{i\in I}U_i\to X$ induced by any choice of representing monomorphisms. (When coproducts exist, the colimit $\colim_{i\in I}U_i$ is a quotient of the coproduct $\coprod_{i\in I}U_i$, so that we may proceed as explained above.) If now $\C$ and~$\D$ are regular categories with small joins and directed colimits, then it follows that joins of directed families of subobjects in $\C$ are preserved by any functor $F\colon {\C\to \D}$ which is regular and preserves directed colimits. We obtain the following result, which is probably known. 

\begin{proposition}\label{Proposition geom vs coh}
	Suppose $\C$ and $\D$ are regular categories with small joins and directed colimits. If $F\colon {\C\to \D}$ is a coherent functor which preserves directed colimits, then $F$ is a geometric functor.
\end{proposition}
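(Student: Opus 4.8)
The plan is to reduce the preservation of arbitrary jointly extremally epimorphic families to the two cases that we already control: finite families (handled by coherence, via the standard fact that a coherent functor between regular categories is regular, hence preserves image factorisations and binary joins) and directed families (handled by the hypothesis that $F$ preserves directed colimits). The bridge between these is the observation, recalled just before the statement, that in a regular category with small joins every join can be rewritten as a directed join of finite joins.

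First I would fix an object $X$ of $\C$ and a small family of subobjects $(U_i\leq X)_{i\in I}$; by Proposition~\ref{Proposition joins} it suffices to show that $F$ preserves the join $\bigvee_{i\in I}U_i\leq X$. Consider the poset $\Pf(I)$ of finite subsets of $I$, directed under inclusion, and for each $J\in\Pf(I)$ put $U_J:=\bigvee_{i\in J}U_i\leq X$. Since $F$ is coherent between regular categories it is regular, so it preserves binary joins of subobjects (images of the induced map from a binary coproduct, or from a pushout-style construction — in any case the $2$-ary instance of the image description of joins); by an evident induction $F$ preserves each finite join, i.e.\ $F(U_J)\cong\bigvee_{i\in J}F(U_i)$ as subobjects of $FX$. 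Next, the family $(U_J)_{J\in\Pf(I)}$ is a directed family of subobjects of $X$ whose join is again $\bigvee_{i\in I}U_i$, and as recalled in the text this directed join is the image of the canonical arrow $\colim_{J\in\Pf(I)}U_J\to X$. Applying $F$, which preserves finite limits, regular epimorphisms and directed colimits, the image factorisation of this arrow is sent to the image factorisation of $\colim_{J}F(U_J)\to FX$; hence $F\bigl(\bigvee_{i\in I}U_i\bigr)$ is the directed join in $FX$ of the $F(U_J)\cong\bigvee_{i\in J}F(U_i)$, which is exactly $\bigvee_{i\in I}F(U_i)$. Therefore $F$ preserves the join, and by Proposition~\ref{Proposition joins} again, $F$ is geometric.

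The main obstacle I expect is purely bookkeeping about subobjects: one must be careful that ``$F$ preserves a join'' means the canonical comparison subobject $\bigvee_{i}F(U_i)\leq F\bigl(\bigvee_i U_i\bigr)$ in $FX$ is an isomorphism, and to see this one genuinely uses that a coherent functor between regular categories preserves image factorisations (so that $F$ applied to the representing monos still represents subobjects of $FX$), that it preserves the binary-coproduct-or-diagram whose image computes a finite join, and that it commutes with the filtered colimit computing the directed join. None of these steps is hard individually, but threading them together so that the comparison map is recognised as an isomorphism — rather than merely an epimorphism or a monomorphism — is where the care lies; invoking Proposition~\ref{Proposition joins} at both ends is what lets us phrase everything in terms of joins and sidestep a direct manipulation of jointly extremally epimorphic families.
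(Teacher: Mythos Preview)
Your proposal is correct and follows essentially the same route as the paper: reduce via Proposition~\ref{Proposition joins} to preservation of joins, rewrite $\bigvee_{i\in I}U_i$ as the directed join over $\Pf(I)$ of the finite joins $U_J=\bigvee_{i\in J}U_i$, and then use coherence for the finite joins, preservation of directed colimits for the directed system, and regularity for the image factorisation that computes the directed join. The paper's proof is slightly terser but the decomposition and the three ingredients invoked are identical.
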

\begin{proof}
	We consider a set-indexed family $(U_i\leq X)_{i\in I}$ of subobjects of an object~$X$ of $\C$, and prove that its join is preserved by the functor $F$:
	\[
		F\bigl(\bigjoin_{i\in I}U_i\bigr)=\bigjoin_{i\in I}F(U_i)\leq F(X).
	\]
	For this, we consider a particular construction of that join by means of a directed colimit. We write $\Pf(I)=\{J\subseteq I\mid \text{$J$ finite}\}$ for the set of all finite subsets of~$I$. Then $(m_J\colon \bigjoin_{i\in J}U_i\to X)_{J\in \Pf(I)}$ represents a directed family of subobjects of~$X$, whose join, represented by the image of the arrow
	\[
		\lgroup m_J\rgroup_J\colon \colim_{J\in \Pf(I)}\bigjoin_{i\in J}U_i\to X,
	\]
	is precisely $\bigjoin_{i\in I}U_i\leq X$. This construction of $\bigjoin_{i\in I}U_i$ is indeed preserved by the functor $F$, because $F$ preserves finite joins (by coherence), directed colimits (by assumption), and image factorisations (by regularity: a coherent functor is always regular).
\end{proof}

\begin{definition}\label{Definition Geometric Category}
	A regular category with small joins $\C$ is \defn{geometric} in the sense of~\cite{Johnstone:Elephant} (and called a \defn{logos} in~\cite{Alligators}) if and only if, for any morphism $f\colon{X\to Y}$ in~$\C$, the change-of-base functor $f^{*}\colon (\C\downarrow Y)\to (\C\downarrow X)$ is geometric.
\end{definition}

\begin{remark}\label{Remark Logos}
	It is well known that in a regular context, the change-of-base functors $f^{*}\colon (\C\downarrow Y)\to (\C\downarrow X)$ are geometric if and only if so are the pullback functors $f^{*}\colon \Sub{\C}{Y}\to \Sub{\C}{X}$, and that this latter condition is equivalent to the pullback functors $f^{*}\colon \Sub{\C}{Y}\to \Sub{\C}{X}$ being left adjoints~\cite[1.7]{Alligators}. This need, however, not imply that the functors $f^{*}\colon (\C\downarrow Y)\to (\C\downarrow X)$ are left adjoints, which would make the category \emph{locally cartesian closed}---see the remark following Lemma~A.1.5.13 in~\cite{Johnstone:Elephant}.
\end{remark}

Just like for coherence (as remarked in~\cite{acc}), there are no non-trivial geometric semi-abelian categories. As in other cases~\cite{Beck,Bourn1991,GrayPhD,acc}, in Definition~\ref{Definition Geometric Category} we replace the basic fibration by the fibration of points. We thus find the concept of an \emph{algebraic logos}.

\section{Algebraic logoi}\label{Section Algebraic Logoi}
Let $\C$ be a category and $B$ an object of $\C$. We write $\Pt{\C}{B}$ for the \defn{category of points over $B$ in $\C$}. An object in $\Pt{\C}{B}$ is a pair $(\alpha\colon A\to B,\beta\colon B\to A)$ where $\alpha\comp\beta=1_B$, and a morphism 
\[
	f\colon(\alpha\colon A\to B,\beta\colon B\to A)\to (\bar\alpha\colon \bar A\to B,\bar\beta\colon B\to \bar A)
\]
is an $f\colon A\to \bar A$ such that $\bar\alpha\comp f=\alpha$ and $f\comp \beta=\bar\beta$.

\begin{definition}\label{Definition algebraic logos}
	A category~$\C$ with finite limits is called an \defn{algebraic logos} (or, rather awkwardly, an \defn{algebraically geometric category}) if and only if for each morphism $f\colon{X\to Y}$ in~$\C$, the change-of-base functor
	\[
		f^{*}\colon {\Pt{\C}{Y}\to \Pt{\C}{X}}
	\]
	determined by pulling back along $f$ is geometric.
\end{definition}

\begin{theorem}\label{Theorem Coherent iff Geometric for Varieties}
	A quasi-variety of algebras is an algebraic logos if and only if it is an algebraically coherent category.
\end{theorem}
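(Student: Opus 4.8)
The plan is to obtain the nontrivial implication as an application of Proposition~\ref{Proposition geom vs coh}. One direction is immediate and requires no hypothesis on the category: a geometric functor is in particular coherent, since a jointly extremally epimorphic \emph{pair} is a special case of a jointly extremally epimorphic \emph{family}, and both conditions include preservation of finite limits. Hence every algebraic logos is algebraically coherent. For the converse, assume that $\C$ is an algebraically coherent quasi-variety and fix a morphism $f\colon{X\to Y}$ in $\C$. We must show that the change-of-base functor $f^{*}\colon{\Pt{\C}{Y}\to\Pt{\C}{X}}$ is geometric, and the idea is to check that the hypotheses of Proposition~\ref{Proposition geom vs coh} are satisfied by this functor.

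First I would collect the ambient structural facts. A quasi-variety is a regular, locally finitely presentable category; in particular it is well-powered and complete -- so each of its subobject posets is a complete lattice and it therefore has small joins -- and directed colimits in it commute with finite limits. I would then argue that these three properties -- being regular, having small joins, having directed colimits -- are inherited by the fibres of the fibration of points: for every object $B$ the category $\Pt{\C}{B}$ is regular, has small joins, and has directed colimits, the latter being created by the functor $\Pt{\C}{B}\to\C$ that sends a point $(\alpha\colon{A\to B},\beta)$ to $A$. (The fact that directed colimits of points are computed ``on domains'' is a purely diagrammatic check: the equation $\alpha\comp\beta=1_B$ survives the passage to the colimit of the domains in $\C$, with no hypothesis on $B$. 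Regularity and small joins for $\Pt{\C}{B}$ then follow routinely from those of $\C$, using that finite limits in $\Pt{\C}{B}$ are computed as in $\C$.) By the algebraic coherence hypothesis, $f^{*}$ is a coherent functor, and in particular it preserves finite limits. In view of Proposition~\ref{Proposition geom vs coh} it therefore only remains to check that $f^{*}$ preserves directed colimits.

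This last verification is where the quasi-variety hypothesis is actually used. Given a directed diagram of points over $Y$, its colimit is formed by taking the directed colimit of the domains in $\C$; the functor $f^{*}$ acts on domains as pullback along $f$; and, since directed colimits commute with finite limits in $\C$, this pullback of the colimit coincides with the colimit of the pullbacks, which is exactly the domain of the directed colimit of the diagram obtained by applying $f^{*}$. One then checks that the comparison morphism is compatible with the splittings, so that $f^{*}$ indeed preserves the directed colimit. Proposition~\ref{Proposition geom vs coh} now applies and gives that $f^{*}$ is geometric; as $f$ was arbitrary, $\C$ is an algebraic logos. I expect that the only point requiring real care is the middle step -- transferring regularity, small joins and the ``computed-on-domains'' description of directed colimits from $\C$ to the fibres $\Pt{\C}{B}$ -- after which the statement falls out of the already-established Proposition~\ref{Proposition geom vs coh} with essentially no further work.
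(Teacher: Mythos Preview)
Your proposal is correct and follows essentially the same route as the paper: both reduce the nontrivial implication to an application of Proposition~\ref{Proposition geom vs coh}, using that directed colimits commute with finite limits in a quasi-variety so that each change-of-base functor $f^{*}$ preserves directed colimits. The paper's proof is a one-line invocation of this, whereas you have (correctly) spelled out the intermediate verifications---that the fibres $\Pt{\C}{B}$ inherit regularity, small joins and directed colimits from $\C$, and that $f^{*}$ preserves the latter because pullback commutes with directed colimits in $\C$---which the paper leaves implicit.
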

\begin{proof}
	This follows immediately from Proposition~\ref{Proposition geom vs coh} and the fact that in a quasi-variety of algebras, directed colimits commute with finite limits.
\end{proof}

\begin{remark}
	We know of no interpretation of this condition in terms of subobjects precisely as in Remark~\ref{Remark Logos}. On the other hand, in both cases, ``being geometric'' can be phrased in terms of preservation of jointly extremally epimorphic families. Furthermore, Theorem~\ref{Theorem intersections} below explains that something slightly different does work. 
\end{remark}

\begin{remark}
	Recall~\cite{Tohoku} that a cocomplete abelian category satisfies the axiom (AB5) when in it, a directed colimit of short exact sequences is still a short exact sequence. In the abelian setting, this is well known to be equivalent to the condition that kernels commute with directed colimits, which amounts to saying that finite limits and directed colimits commute. It is precisely this latter, stronger variant of the (AB5) condition, which in the context of a cocomplete category is what we need to make the proof of Theorem~\ref{Theorem Coherent iff Geometric for Varieties} work: for cocomplete categories that satisfy it, ``geometric = coherent''.
\end{remark}

\begin{lemma}\label{Lemma Subobject}
	The forgetful functors $\Pt{\C}{X}\to (\C\downarrow X)$ and $(\C\downarrow X)\to \C$ preserve and reflect jointly extremally epimorphic families of arrows.
\end{lemma}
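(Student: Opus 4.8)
The plan is to treat both forgetful functors uniformly, using three standard facts about slice and coslice projections: each of $\Pt{\C}{X}\to(\C\downarrow X)$ and $(\C\downarrow X)\to \C$ is faithful, preserves pullbacks (hence monomorphisms), and reflects isomorphisms. For $(\C\downarrow X)\to\C$ this is classical; for $\Pt{\C}{X}\to(\C\downarrow X)$ it is enough to observe that $\Pt{\C}{X}$ is the coslice of $(\C\downarrow X)$ under its terminal object $(1_X\colon X\to X)$, so that the same generalities apply. Writing $F\colon \mathcal A\to\mathcal B$ for either functor, the reflection half follows at once: if $(Fu_i\colon FU_i\to FA)_{i\in I}$ is jointly extremally epimorphic in $\mathcal B$ and $(u_i)_{i\in I}$ factors through a monomorphism $m\colon M\to A$ in $\mathcal A$, then $(Fu_i)_{i\in I}$ factors through the monomorphism $Fm$, which must then be an isomorphism; since $F$ reflects isomorphisms, $m$ is an isomorphism, so $(u_i)_{i\in I}$ is jointly extremally epimorphic.

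The substance lies in the preservation half, which amounts to lifting a factorisation from $\mathcal B$ back up to $\mathcal A$. Suppose $(u_i\colon U_i\to A)_{i\in I}$ is jointly extremally epimorphic in $\mathcal A$ and that $(Fu_i)_{i\in I}$ factors through a monomorphism $v\colon V\to FA$ in $\mathcal B$, say $Fu_i=v\comp w_i$. For $(\C\downarrow X)\to\C$ one equips $V$ with the structure map $\alpha_A\comp v\colon V\to X$, where $\alpha_A$ denotes the structure map of $A$; then $v$ is tautologically a morphism of $(\C\downarrow X)$, and so is each $w_i$, since $u_i$ is. For $\Pt{\C}{X}\to(\C\downarrow X)$ the same recipe supplies the structure map of $V$, and one additionally needs a section $X\to V$: the natural candidate $\beta_V:=w_i\comp\beta_{U_i}$ (with $\beta_{U_i}$ the section of $U_i$) is independent of the chosen index $i$, because $v\comp w_i\comp\beta_{U_i}=u_i\comp\beta_{U_i}=\beta_A$ and $v$ is a monomorphism. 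One then checks that $(\alpha_A\comp v)\comp\beta_V=1_X$, so that $(V,\alpha_A\comp v,\beta_V)$ is a point of which $v$ and the $w_i$ are morphisms. In either case, $(u_i)_{i\in I}$ now factors through the monomorphism $v$ inside $\mathcal A$; hence $v$ is an isomorphism there, so $Fv=v$ is an isomorphism in $\mathcal B$, and $(Fu_i)_{i\in I}$ is jointly extremally epimorphic.

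I expect the step needing the most attention to be precisely this lifting: checking that $v$ is still a monomorphism in the new category, that the lifted section $\beta_V$ is well defined independently of the index $i$ (this is where the monomorphy of $v$, and the family being nonempty, come in), and that all the transported arrows obey the identities required of morphisms of points. The remainder is formal manipulation with the definitions of $\Pt{\C}{X}$ and $(\C\downarrow X)$.
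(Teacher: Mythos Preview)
Your argument is correct and amounts to an expanded version of the paper's one-sentence proof. The paper simply records that both functors preserve kernel pairs and are conservative, hence preserve and reflect monomorphisms, and asserts that this ``immediately implies the claim.'' Your reflection argument is exactly that implication (conservative plus mono-preserving implies reflection of jointly extremally epimorphic families, as in the paper's later Lemma~\ref{Lemma Reg Protomodular}). For preservation, the paper's ``immediately'' conceals precisely the lifting construction you carry out in detail: endowing $V$ with the induced structure map $\alpha_A\circ v$, and in the points case with the section $\beta_V=w_i\circ\beta_{U_i}$, so that the factorisation through $v$ lives upstairs. So the two approaches are the same in spirit; yours makes explicit the step the paper leaves to the reader.

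Your remark about nonemptiness is to the point and worth keeping. The definition of $\beta_V$ genuinely requires an index $i$, and in fact preservation for the functor $\Pt{\C}{X}\to(\C\downarrow X)$ can fail on the empty family: the zero object $(1_X,1_X)$ of $\Pt{\C}{X}$ has no proper subobjects there, yet its image---the terminal object of $(\C\downarrow X)$---may admit proper subobjects, so the empty family is jointly extremally epimorphic in $\Pt{\C}{X}$ but not in $(\C\downarrow X)$. The paper does not flag this edge case; it is harmless for the applications made of the lemma, where the families in question are nonempty.
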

\begin{proof}
	The given functors preserve kernel pairs and are conservative, hence they preserve and reflect monomorphisms, which immediately implies the claim.
\end{proof}

\subsection{Stability properties.}
Using essentially the same proofs as in~\cite{acc}, we recover some of the stability properties of algebraically coherent categories.

\begin{proposition}\label{Prop Slice-coslice}
	If a category $\C$ is an algebraic logos, then, for each $X$ in~$\C$, the categories $(\C\downarrow X)$ and $(X\downarrow\C)$ are also algebraic logoi.
\end{proposition}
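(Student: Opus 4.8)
The plan is to reduce both assertions to the hypothesis on $\C$ itself, by identifying each fibre of the fibration of points over a slice or coslice of $\C$ with a fibre of the fibration of points over $\C$, compatibly with change of base. I will use throughout three elementary facts about geometric functors, all immediate from Definition~\ref{Definition Geometric Functor}: geometric functors are closed under composition; every equivalence (in particular every isomorphism) of finitely complete categories is geometric; and a functor isomorphic to a geometric one is geometric. I also use that slices and coslices of the finitely complete category $\C$ are again finitely complete, with their pullbacks computed as in $\C$ (the forgetful functor to $\C$ creating them).

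For $(\C\downarrow X)$, the first step is to observe that for an object $a\colon A\to X$, a point over it in $(\C\downarrow X)$ amounts to nothing more than a point over $A$ in $\C$: if $\alpha\colon P\to A$ and $\beta\colon A\to P$ are morphisms over $X$ with $\alpha\comp\beta=1_A$, then the structural map $P\to X$ is forced to equal $a\comp\alpha$, and a morphism of such points is automatically a morphism over $X$; this gives an isomorphism of categories $\Pt{\C\downarrow X}{a}\cong\Pt{\C}{A}$. The second step is to check that, for a morphism $\phi\colon(b\colon B\to X)\to(a\colon A\to X)$ of $(\C\downarrow X)$---that is, a morphism $\phi\colon B\to A$ of $\C$ with $a\comp\phi=b$---the change-of-base functor $\phi^{*}$ of the fibration of points over $(\C\downarrow X)$ corresponds, under the isomorphisms just described, to the change-of-base functor $\phi^{*}\colon\Pt{\C}{A}\to\Pt{\C}{B}$ of the fibration of points over $\C$. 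This is precisely where the fact that pullbacks in $(\C\downarrow X)$ are those of $\C$ is used. Since the latter functor is geometric (because $\C$ is an algebraic logos) and the comparison isomorphisms are geometric, so is the former; as every morphism of $(\C\downarrow X)$ arises in this way, $(\C\downarrow X)$ is an algebraic logos.

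The coslice $(X\downarrow\C)$ is handled in the same manner. For an object $b\colon X\to B$, a point over it in $(X\downarrow\C)$ consists of an object $x\colon X\to P$ together with morphisms $\alpha\colon P\to B$ and $\beta\colon B\to P$ under $X$ with $\alpha\comp\beta=1_B$; here the condition $\beta\comp b=x$ forces $x=\beta\comp b$ (whence $\alpha\comp x=b$ holds automatically), and once more a morphism of such points is automatically a morphism under $X$, so forgetting $x$ yields an isomorphism of categories $\Pt{X\downarrow\C}{b}\cong\Pt{\C}{B}$. A morphism $\phi\colon(c\colon X\to C)\to(b\colon X\to B)$ of $(X\downarrow\C)$ is a morphism $\phi\colon C\to B$ of $\C$ with $\phi\comp c=b$, and---pullbacks in $(X\downarrow\C)$ again being those of $\C$---the change-of-base functor along it corresponds to $\phi^{*}\colon\Pt{\C}{B}\to\Pt{\C}{C}$, which is geometric. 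Hence $(X\downarrow\C)$ is an algebraic logos.

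I do not anticipate a genuine obstacle: as with the analogous stability result for algebraically coherent categories in~\cite{acc}, the argument is essentially bookkeeping. The single step that calls for a little care is the identification of the change-of-base functors in the two preceding paragraphs, and it is settled by the one observation that pullbacks in a slice or coslice of $\C$ are created by the forgetful functor down to $\C$.
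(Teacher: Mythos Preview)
Your argument is correct and follows exactly the paper's approach: you identify $\Pt{\C\downarrow X}{a}\cong\Pt{\C}{A}$ and $\Pt{X\downarrow\C}{b}\cong\Pt{\C}{B}$, check that these identifications intertwine the change-of-base functors, and conclude. The paper's proof is the same, only terser---it records the commuting square of isomorphisms and change-of-base functors and asserts that ``a similar argument holds for the coslice category''---whereas you have spelled out the bookkeeping in full.
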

\begin{proof}
	For each morphism 
	\[
		\xymatrix{
			Y \ar[dr]_\alpha \ar[rr]^f & & Z \ar[dl]^\beta \\
			& X
		}
	\]
	in the slice category $(\C \downarrow X)$, there are isomorphisms of categories (the horizontal arrows below) which make the diagram
	\[
		\xymatrix{
		\Pt{\C\downarrow X}{(Z,\beta)} \ar[r]^-\cong \ar[d]_{(f \downarrow X)^{*}} & \Pt{\C}{Z} \ar[d]^{f^*} \\
		\Pt{\C\downarrow X}{(Y,\alpha)} \ar[r]_-\cong & \Pt{\C}{Y}
		}
	\]
	commute. It follows that $(f\downarrow X)^{*}$ is geometric whenever $f^*$ is. A similar argument holds for the coslice category $(X \downarrow \C)$.
\end{proof}

\begin{corollary}\label{Corollary Fibres}
	If a category $\C$ is an algebraic logos, then any fibre $\Pt{\C}{X}$ is an algebraic logos as well.
\end{corollary}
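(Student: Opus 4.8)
The plan is to reduce to $\C$ itself by means of the standard description of the fibres of the fibration of points over $\Pt{\C}{X}$, exactly in the spirit of the proof of the corresponding stability property for algebraically coherent categories in~\cite{acc}. The point to exploit is that a point over a point of $\C$ is again (equivalently) a point of $\C$.

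First I would fix an object $P=(\alpha\colon A\to X,\beta\colon X\to A)$ of $\Pt{\C}{X}$ and construct an equivalence of categories $\Pt{\Pt{\C}{X}}{P}\simeq\Pt{\C}{A}$. A point over $P$ in $\Pt{\C}{X}$ consists of an object $(\alpha'\colon C\to X,\beta'\colon X\to C)$ of $\Pt{\C}{X}$ together with morphisms $p\colon C\to A$ and $s\colon A\to C$ of $\Pt{\C}{X}$ such that $p\comp s=1_A$. The compatibility conditions force $\alpha'=\alpha\comp p$ and $\beta'=s\comp\beta$, so the point is nothing but a point $(p\colon C\to A,\ s\colon A\to C)$ over $A$ in~$\C$; the remaining identities $p\comp\beta'=\beta$ and $\alpha'\comp s=\alpha$ then hold automatically. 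A routine check shows that morphisms of points over $P$ correspond precisely to morphisms of points over $A$, which yields the equivalence.

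Next I would verify that this equivalence is compatible with change of base: given a morphism $f\colon P_1\to P_2$ in $\Pt{\C}{X}$, whose underlying arrow in $\C$ I also denote $f\colon A_1\to A_2$, the square
\[
	\xymatrix{
	\Pt{\Pt{\C}{X}}{P_2}\ar[r]^-\simeq\ar[d]_{f^*} & \Pt{\C}{A_2}\ar[d]^{f^*}\\
	\Pt{\Pt{\C}{X}}{P_1}\ar[r]_-\simeq & \Pt{\C}{A_1}
	}
\]
commutes up to isomorphism, because on both sides the change-of-base functor $f^*$ is computed by the same pullbacks taken in $\C$. Since $\C$ is an algebraic logos, the functor $f^*\colon\Pt{\C}{A_2}\to\Pt{\C}{A_1}$ on the right is geometric; as the class of geometric functors is closed under composition and contains all equivalences, the change-of-base functor $f^*$ on the left is geometric too. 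As $f$ was an arbitrary morphism of $\Pt{\C}{X}$, this proves that $\Pt{\C}{X}$ is an algebraic logos.

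The only real work is the bookkeeping of the first two steps---identifying $\Pt{\Pt{\C}{X}}{P}$ with $\Pt{\C}{A}$ and matching up the change-of-base functors---and this presents no genuine obstacle, only care about which structure maps to $X$ are present and which are determined by the rest. Alternatively, one may observe that $\Pt{\C}{X}$ is equivalent to the coslice of $(\C\downarrow X)$ under its terminal object $1_X$ (i.e.\ to the category of points of $(\C\downarrow X)$ over its terminal object) and then invoke Proposition~\ref{Prop Slice-coslice} twice; the direct argument above is of comparable length.
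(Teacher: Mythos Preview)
Your proof is correct and is essentially the paper's argument unpacked: the paper simply observes that $\Pt{\C}{X}$ may be viewed as the coslice $(1_X\downarrow(\C\downarrow X))$ and invokes Proposition~\ref{Prop Slice-coslice}, which is precisely the alternative you sketch at the end. Your primary ``direct'' argument---the equivalence $\Pt{\Pt{\C}{X}}{P}\simeq\Pt{\C}{A}$ together with the commuting square of change-of-base functors---is exactly the content of that proposition specialised to this situation, so the two routes amount to the same thing.
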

\begin{proof}
	This is a consequence of Proposition~\ref{Prop Slice-coslice} because $\Pt{\C}{X}$ may be seen as $((X\downarrow 1_X)\downarrow(\C\downarrow X))$.
\end{proof}

\begin{proposition}\label{Proposition Functor Categories}
	If $\C$ is an algebraic logos, then so is any category of diagrams in~$\C$. In particular, such are the categories $\Arr(\C)$ of arrows, $\PT{\C}$ of points, and $\RG(\C)$ of reflexive graphs in $\C$.
\end{proposition}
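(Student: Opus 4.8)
The plan is to reduce the statement to the case of a functor category $[\mathcal{J},\C]$ with $\mathcal{J}$ a small category, the named examples $\Arr(\C)$, $\PT{\C}$ and $\RG(\C)$ being the cases where $\mathcal{J}$ is, respectively, the two-object category encoding an arrow, a split epimorphism, and a reflexive graph. In $[\mathcal{J},\C]$ finite limits, monomorphisms and isomorphisms are all computed componentwise, and there is a canonical isomorphism of categories $\PT{[\mathcal{J},\C]}\cong[\mathcal{J},\PT{\C}]$ --- a split epimorphism of diagrams being the same as a diagram of split epimorphisms --- which is compatible with the two codomain functors. Hence, for a natural transformation $f\colon X\to Y$ in $[\mathcal{J},\C]$, the change-of-base functor $f^{*}\colon\Pt{[\mathcal{J},\C]}{Y}\to\Pt{[\mathcal{J},\C]}{X}$ is the ``pointwise'' application of the change-of-base functors $f_j^{*}\colon\Pt{\C}{Y_j}\to\Pt{\C}{X_j}$ of $\C$: the evaluation functors $E_j\colon\Pt{[\mathcal{J},\C]}{Y}\to\Pt{\C}{Y_j}$ preserve finite limits, jointly reflect finite limits and isomorphisms, and satisfy $E_j\comp f^{*}\cong f_j^{*}\comp E_j$. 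Applying Lemma~\ref{Lemma Subobject} once with the category $[\mathcal{J},\C]$ and once with $\C$, one also sees that a family in $\Pt{[\mathcal{J},\C]}{Y}$ is jointly extremally epimorphic exactly when the underlying family of diagram morphisms is so in $[\mathcal{J},\C]$, and that the latter holds exactly when, for each object $j$ of $\mathcal{J}$, the family of $j$-components is jointly extremally epimorphic in $\Pt{\C}{Y_j}$. This reduces everything to two facts about $[\mathcal{J},\C]$.

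The first, that $f^{*}$ preserves finite limits, is immediate: the $E_j$ jointly reflect finite limits, and each $f_j^{*}$ preserves them. The second, and the heart of the argument, is that a small family $(v_i\colon V_i\to W)_{i\in I}$ in $[\mathcal{J},\C]$ is jointly extremally epimorphic if and only if each component family $(v_{i,j}\colon V_{i,j}\to W_j)_{i\in I}$ is jointly extremally epimorphic in $\C$. The ``if'' direction is easy: a monomorphism $m\colon M\to W$ in $[\mathcal{J},\C]$ through which all the $v_i$ factor has each component $m_j$ a monomorphism through which all the $v_{i,j}$ factor, hence --- by hypothesis --- an isomorphism, so that $m$ is an isomorphism. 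For the ``only if'' direction, suppose that for some $j_0$ the family $(v_{i,j_0})_{i\in I}$ is not jointly extremally epimorphic, witnessed by a non-invertible monomorphism $n\colon N\to W_{j_0}$ through which all the $v_{i,j_0}$ factor. I would then form the subfunctor $M\leq W$ whose value at $j$ is the intersection $M_j=\bigcap_{p\in\mathcal{J}(j,j_0)}(W(p))^{*}(N)$ of the pullbacks of $n$ along the structure maps $W(p)\colon W_j\to W_{j_0}$. Functoriality of $W$ makes $M$ stable under transitions; naturality of each $v_i$, together with uniqueness of factorisations through the monomorphisms $M_j\to W_j$, makes every $v_i$ factor through $M$; and $M_{j_0}\leq N$ is not the whole of $W_{j_0}$, so $M\to W$ is not an isomorphism --- contradicting joint extremal epimorphy of $(v_i)_{i\in I}$. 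Granting this claim and the reductions above, the proposition follows: if $(u_i)_{i\in I}$ is jointly extremally epimorphic in $\Pt{[\mathcal{J},\C]}{Y}$, then so is each $(u_{i,j})_{i\in I}$ in $\Pt{\C}{Y_j}$, hence so is each $(f_j^{*}u_{i,j})_{i\in I}\cong(E_j(f^{*}u_i))_{i\in I}$ by geometricity of $f_j^{*}$, hence $(f^{*}u_i)_{i\in I}$ is jointly extremally epimorphic in $\Pt{[\mathcal{J},\C]}{X}$.

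The main obstacle is the well-definedness of the subfunctor $M$: the intersection defining $M_j$ is indexed by the hom-set $\mathcal{J}(j,j_0)$, which, although small, may be infinite, so the argument as stated requires small intersections of subobjects in $\C$. This is unproblematic when $\mathcal{J}$ is finite --- in particular for $\Arr(\C)$, $\PT{\C}$ and $\RG(\C)$, where only finitely many morphisms occur between any two objects and only ordinary pullbacks are involved --- and also in the well-powered setting with joins that is this paper's main concern, where subobject lattices are complete; for the statement in full generality one should either restrict to such $\C$ or check that the relevant intersections exist in the case at hand. The rest is a routine, if slightly tedious, verification that the identifications of the first paragraph behave as asserted.
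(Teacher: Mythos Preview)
Your approach is essentially the one the paper intends: both arguments rest on the fact that in a functor category $[\mathcal{J},\C]$ the change-of-base functors of the fibration of points are computed componentwise, so that geometricity can be checked object-by-object in $\mathcal{J}$. The paper compresses this into a single sentence (``limits and colimits are computed pointwise''); you unpack it and, in doing so, isolate the one step that is not entirely formal --- namely the ``only if'' direction, that a jointly extremally epimorphic family of diagram morphisms is componentwise jointly extremally epimorphic in $\C$.

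Your subfunctor construction for that direction is correct, and the caveat you raise is a genuine one: the intersection $\bigcap_{p\in\mathcal{J}(j,j_0)} W(p)^{*}(N)$ is indexed by a hom-set that may be infinite, and a bare finitely complete $\C$ need not admit such intersections. For the three named examples the indexing categories have finite hom-sets, so only pullbacks and finite intersections are required and your argument goes through unconditionally; for general small $\mathcal{J}$ one should either assume $\C$ is well-powered with small meets (as in the paper's main setting) or restrict to $\mathcal{J}$ with finite hom-sets. The paper's one-line proof does not flag this distinction, so your more careful treatment is an improvement in precision rather than a different method.
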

\begin{proof}
	This is an immediate consequence of the fact that in a category of diagrams, limits and colimits are computed pointwise. 
\end{proof}

\begin{proposition}\label{Proposition Reflective Subcat}
	If $\B$ is a full subcategory of an algebraic logos~$\C$ closed under finite products and subobjects (and hence all finite limits), then $\B$ is an algebraic logos.
	In particular, any full replete (extremal epi)-reflective subcategory of an algebraic logos is an algebraic logos.
\end{proposition}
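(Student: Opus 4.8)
The plan is to reduce the statement to the geometricity of the change-of-base functors in~$\C$, exploiting that finite limits and subobjects in~$\B$ are computed as in~$\C$. First I would check that $\B$ has finite limits, agreeing with those of~$\C$: closure under finite products provides the terminal object and binary products, and an equaliser in~$\C$ is a subobject of a product, so it lies in~$\B$ and, by fullness, has there the required universal property. It follows that for every object~$Z$ of~$\B$ the category $\Pt{\B}{Z}$ is a full subcategory of $\Pt{\C}{Z}$ which is closed under finite limits (a finite limit in $\Pt{\C}{Z}$ of points of~$\B$ has underlying object a finite limit in~$\C$ of objects of~$\B$, hence lies in~$\B$, and is then a point of~$\B$ by fullness) and under subobjects (the forgetful functor $\Pt{\C}{Z}\to\C$ preserves monomorphisms --- cf.\ the proof of Lemma~\ref{Lemma Subobject} --- and $\B$ is closed under subobjects). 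Moreover, for $f\colon X\to Y$ in~$\B$ the functor $f^{*}\colon\Pt{\B}{Y}\to\Pt{\B}{X}$ is the restriction of $f^{*}\colon\Pt{\C}{Y}\to\Pt{\C}{X}$, since pulling a point of~$\B$ back along~$f$ is a finite limit, hence computed in~$\C$ and staying in~$\B$; in particular $f^{*}_{\B}$ preserves finite limits.

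The remaining task is to show that $f^{*}_{\B}$ preserves jointly extremally epimorphic families, and the key is the following elementary observation, which I would prove first: if $\D'$ is a full subcategory of a finitely complete category~$\D$, closed under finite limits and under subobjects, then a family of arrows between objects of~$\D'$ is jointly extremally epimorphic in~$\D'$ if and only if it is so in~$\D$. Here closure under kernel pairs ensures that a morphism of~$\D'$ is monic in~$\D'$ exactly when it is monic in~$\D$, and closure under subobjects ensures that a monomorphism of~$\D$ whose codomain lies in~$\D'$ already lies in~$\D'$. Granting this: if the family is jointly extremally epimorphic in~$\D'$ and factors through a monomorphism~$m$ of~$\D$, then $m$, together with the factorisation of the family through it, lies in~$\D'$ by fullness, so $m$ is an isomorphism in~$\D'$, hence in~$\D$; conversely, if the family is jointly extremally epimorphic in~$\D$ and factors through a monomorphism~$m$ of~$\D'$, then $m$ is monic in~$\D$ and the family still factors through it there, so $m$ is an isomorphism in~$\D$, hence --- by fullness --- in~$\D'$. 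The only genuinely non-formal ingredient here is the closure under subobjects.

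Applying this observation to $\Pt{\B}{Y}\subseteq\Pt{\C}{Y}$ and to $\Pt{\B}{X}\subseteq\Pt{\C}{X}$ now finishes the first claim: a jointly extremally epimorphic family in $\Pt{\B}{Y}$ is one in $\Pt{\C}{Y}$; its image under the geometric functor $f^{*}_{\C}$ is jointly extremally epimorphic in $\Pt{\C}{X}$; and, $f^{*}_{\C}$ coinciding with $f^{*}_{\B}$ on points of~$\B$, this image is jointly extremally epimorphic in $\Pt{\B}{X}$. For the ``in particular'' clause, recall that a full replete reflective subcategory is closed under all limits, hence under finite products, and that if every reflection unit $\eta_{S}$ is an extremal epimorphism then $\B$ is closed under subobjects: given a monomorphism $m\colon S\to B$ with $B\in\B$, the arrow $m$ factors through~$\eta_{S}$, forcing $\eta_{S}$ to be monic, hence --- being both monic and extremally epic --- an isomorphism, so that $S\in\B$. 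Such a~$\B$ therefore satisfies the hypotheses of the first part, and the conclusion follows.
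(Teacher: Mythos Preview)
Your proof is correct and follows the natural route. The paper's own proof only spells out the ``in particular'' clause (closure under subobjects for (extremal epi)-reflective subcategories, matching your final paragraph) and leaves the main claim to the analogous argument in~\cite{acc}; what you have written is precisely that argument made explicit.
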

\begin{proof}
	Let us recall the argument that a full replete (extremal epi)-reflective subcategory $\B$ of a category $\C$ is always closed under subobjects. Let $m\colon C\to B$ be a monomorphism in $\C$ whose codomain $B$ lies in $\B$. Apply the reflector $F\colon \C\to \B$ to $m$, which yields a morphism $F(m)$ in $\B$ making the square 
	\[
		\xymatrix{C \ar@{->>}[d]_{\eta_C} \ar@{ >->}[r]^-{m} & B \ar[d]^-{\eta_B}_{\cong}\\
		F(C) \ar[r]_-{F(m)} & F(B)}
	\]
	in $\C$ commute. Since $m$ is a monomorphism, so is the composite $\eta_B\comp m$. Hence $\eta_C$ is both a monomorphism and a regular epimorphism, which makes it an isomorphism, so that $C$ and $m$ lie in $\B$. Finally, the inclusion of $\B$ into $\C$ reflects monomorphisms, hence $m$ is a monomorphism in $\B$.
\end{proof}

\subsection{The protomodular case.}
We recall that a category $\C$ is called \defn{protomodular} in the sense of Bourn~\cite{Bourn1991} if it has pullbacks of split epimorphisms along any morphism and all the change-of-base functors of the fibration of pointed objects $\cod\colon {\PT{\C}\to \C}$ are \defn{conservative}, which means that they reflect isomorphisms. 

See also~\cite{Borceux-Bourn} for a detailed account of this notion, where a category which is pointed, regular and protomodular is called a \defn{homological} category\footnote{This is not to be confused with the homological categories of~\cite{Grandis-HA2}.}. In a homological category, classical diagram lemmas of homological algebra such as the $3\times 3$-Lemma and the Short Five Lemma are valid. A \defn{semi-abelian} category~\cite{Janelidze-Marki-Tholen}\footnote{This notion is different from Ra\"ikov's~\cite{Raikov:Semi-Abelian}.} is a homological category which is Barr exact and admits binary coproducts. A~long list of examples is given in~Section~\ref{Section Examples}; let us here just remark that typical semi-abelian categories are the categories of groups, rings, Lie algebras, compact Hausdorff groups, etc., as well as all abelian categories, while the category of topological groups is homological but not semi-abelian. 

Using Lemma~\ref{Lemma Subobject} we will prove that when $\C$ is a pointed protomodular category, the condition in Definition~\ref{Definition algebraic logos} can be expressed in terms of kernel functors $\Ker\colon \Pt{\C}{X}\to \C$ (which are precisely the change-of-base functors along initial morphisms $!_{X}\colon{0\to X}$) alone.

\begin{lemma}\label{Lemma Reg Protomodular}
	Let $F\colon \C\to \D$ be a functor. If $F$ is conservative and preserves monomorphisms then it reflects jointly extremally epimorphic families.
\end{lemma}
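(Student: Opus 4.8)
The plan is to unwind the definition of a jointly extremally epimorphic family and push the test monomorphism through the functor $F$. Suppose $(u_i\colon U_i\to X)_{i\in I}$ is a family of arrows in $\C$ for which the image family $(F(u_i)\colon F(U_i)\to F(X))_{i\in I}$ is jointly extremally epimorphic in $\D$. To prove that $(u_i)_{i\in I}$ is itself jointly extremally epimorphic, I would take an arbitrary monomorphism $m\colon M\to X$ in $\C$ through which $(u_i)_{i\in I}$ factors, say via arrows $\bar u_i\colon U_i\to M$ with $m\comp\bar u_i=u_i$, and show that $m$ is an isomorphism.

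First I would apply $F$: since $F$ preserves monomorphisms by hypothesis, $F(m)\colon F(M)\to F(X)$ is a monomorphism in $\D$, and functoriality gives $F(m)\comp F(\bar u_i)=F(u_i)$ for every $i\in I$. Thus the family $(F(u_i))_{i\in I}$ factors through the monomorphism $F(m)$ via the arrows $(F(\bar u_i))_{i\in I}$. Because $(F(u_i))_{i\in I}$ is jointly extremally epimorphic in $\D$, it follows that $F(m)$ is an isomorphism. Finally, since $F$ is conservative, $m$ is an isomorphism in $\C$. As $m$ was an arbitrary monomorphism admitting such a factorisation, the family $(u_i)_{i\in I}$ is jointly extremally epimorphic, which is what we wanted.

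I do not expect any real obstacle here: the statement is a direct diagram chase through the definition, the only points needing care being that the factorisation maps $\bar u_i$ are genuinely present (this is part of the hypothesis that $(u_i)$ factors through $m$) and that $F$ being conservative is exactly what converts ``$F(m)$ iso'' back into ``$m$ iso''. No regularity, protomodularity, or pullback-stability is used in the argument itself; those hypotheses only enter when this lemma is later applied to the kernel functors $\Ker\colon\Pt{\C}{X}\to\C$.
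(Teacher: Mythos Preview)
Your proof is correct and follows exactly the same argument as the paper's own proof: take a monomorphism through which the family factors, apply $F$ to obtain a monomorphism through which the image family factors, conclude that its image is an isomorphism, and reflect this via conservativity. The paper states it more tersely, but the content is identical.
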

\begin{proof}
	Suppose that $(u_i\colon U_i\to X)_{i\in I}$ is a family in $\C$ whose image $(F(u_i))_{i\in I}$ in~$\D$ is a jointly extremally epimorphic family. This means that the image $F(u)$ of any monomorphism $u\colon {U\to X}$ in $\C$ through which the family $(u_i)_{i\in I}$ factors is an isomorphism. The result now follows from the assumption that $F$ reflects isomorphisms.
\end{proof}

\begin{proposition}\label{Proposition Regular Protomodular}
	If $\C$ is a protomodular category, then the change-of-base functors of the fibration of points reflect jointly extremally epimorphic families.\noproof
\end{proposition}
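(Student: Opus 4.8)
The plan is to reduce the statement to Lemma~\ref{Lemma Reg Protomodular}, which says that a conservative functor that preserves monomorphisms must reflect jointly extremally epimorphic families. So I would fix a morphism $f\colon X\to Y$ in $\C$ and show that the change-of-base functor $f^{*}\colon\Pt{\C}{Y}\to\Pt{\C}{X}$---given by pulling back split epimorphisms, and their morphisms, along $f$---is conservative and preserves monomorphisms; the conclusion then follows immediately.

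Conservativity is free of charge: the change-of-base functors of the fibration of points are exactly the change-of-base functors of the fibration of pointed objects $\cod\colon\PT{\C}\to\C$, and the assertion that each of these reflects isomorphisms is precisely the definition of a protomodular category.

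For preservation of monomorphisms, the first step is to transport the notion of monomorphism to $\C$ by means of Lemma~\ref{Lemma Subobject}: a morphism of $\Pt{\C}{Y}$ is a monomorphism exactly when its underlying arrow in $\C$ is, and the same holds over $X$. Thus, given a morphism of points $g$ over $Y$ whose underlying arrow $g\colon A_1\to A_2$ is monic in $\C$, I must show that the comparison arrow $f^{*}(g)\colon A_1\times_Y X\to A_2\times_Y X$ between the pullbacks is again monic. This is where a short diagram chase enters: by the usual pasting property of pullback squares, the square whose horizontal arrows are $f^{*}(g)$ and $g$ and whose vertical arrows are the pullback projections $A_i\times_Y X\to A_i$ is itself a pullback, so $f^{*}(g)$ is a pullback of the monomorphism $g$ and hence a monomorphism in $\C$; a final application of Lemma~\ref{Lemma Subobject} upgrades this to a monomorphism in $\Pt{\C}{X}$.

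The whole argument is quite short, and there is no serious obstacle; the only points that deserve attention are the two appeals to Lemma~\ref{Lemma Subobject} (which let us pass freely between monomorphisms in the fibres and monomorphisms in $\C$) and the verification, via pasting of pullbacks, that $f^{*}(g)$ really is a pullback of $g$.
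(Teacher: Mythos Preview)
Your proposal is correct and follows the same approach as the paper, which marks the proposition with \noproof\ as an immediate consequence of Lemma~\ref{Lemma Reg Protomodular}: conservativity of the change-of-base functors is the very definition of protomodularity, and preservation of monomorphisms is clear since these functors preserve finite limits (your pullback-pasting argument is a perfectly good way to see this concretely). The only remark is that your detour through Lemma~\ref{Lemma Subobject} is not strictly needed---the change-of-base functors already preserve limits in $\Pt{\C}{Y}$, hence monomorphisms there directly---but this does not affect correctness.
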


\begin{proposition}\label{Proposition Characterisation kernel functor}
	A protomodular category $\C$ with an initial object is an algebraic logos if and only if
	the change-of-base functors along each morphism from the initial object are geometric.
	In particular a pointed protomodular category is an algebraic logos if and only if the kernel functors $\Ker\colon \Pt{\C}{X}\to \C$ are geometric.
\end{proposition}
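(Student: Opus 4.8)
The plan is to reduce everything to Proposition~\ref{Proposition Regular Protomodular} together with the pseudofunctoriality of change of base in the fibration of points. The forward implication needs no work: each morphism $!_{X}\colon 0\to X$ out of the initial object is in particular a morphism of $\C$, so its change-of-base functor is geometric whenever $\C$ is an algebraic logos. For the ``in particular'' clause, one observes that when $\C$ is pointed the initial object is also terminal, so that $\Pt{\C}{0}$ is isomorphic to $\C$; under this identification the change-of-base functor $(!_{X})^{*}\colon\Pt{\C}{X}\to\Pt{\C}{0}$ becomes exactly the kernel functor $\Ker\colon\Pt{\C}{X}\to\C$, since a point $(\alpha,\beta)$ over $X$ is sent to the pullback of $\alpha$ along $!_{X}$, that is, to $\Ker(\alpha)$. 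Hence the second statement follows from the first.

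For the converse, suppose that the change-of-base functors $(!_{X})^{*}\colon\Pt{\C}{X}\to\Pt{\C}{0}$ along all initial morphisms are geometric, and fix an arbitrary morphism $f\colon X\to Y$; the goal is to show that $f^{*}\colon\Pt{\C}{Y}\to\Pt{\C}{X}$ is geometric. That $f^{*}$ preserves finite limits is a standard property of the reindexing functors of the fibration of points, which preserve the fibred finite limits (see~\cite{Borceux-Bourn}); so the only real content is the preservation of jointly extremally epimorphic families.

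For this, the key observation is that $!_{Y}=f\comp!_{X}\colon 0\to Y$, so pseudofunctoriality yields a natural isomorphism $(!_{Y})^{*}\cong(!_{X})^{*}\comp f^{*}$. Given a jointly extremally epimorphic family $(u_{i})_{i\in I}$ in $\Pt{\C}{Y}$, the hypothesis that $(!_{Y})^{*}$ is geometric makes $\bigl((!_{Y})^{*}u_{i}\bigr)_{i\in I}\cong\bigl((!_{X})^{*}f^{*}u_{i}\bigr)_{i\in I}$ jointly extremally epimorphic in $\Pt{\C}{0}$. On the other hand, $\C$ being protomodular, Proposition~\ref{Proposition Regular Protomodular} tells us that $(!_{X})^{*}$ reflects jointly extremally epimorphic families. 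Therefore $(f^{*}u_{i})_{i\in I}$ is jointly extremally epimorphic in $\Pt{\C}{X}$, as required.

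Once Proposition~\ref{Proposition Regular Protomodular} is in hand the argument is essentially formal, so I do not expect a genuine obstacle; the only points calling for a little care are the identification of $(!_{X})^{*}$ with the kernel functor in the pointed case --- which really does use that the initial object is terminal --- and the observation that $f^{*}$ preserves finite limits, neither of which is difficult.
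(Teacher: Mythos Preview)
Your proof is correct and follows essentially the same approach as the paper: both use the commutative triangle $(!_{Y})^{*}\cong (!_{X})^{*}\comp f^{*}$ together with Proposition~\ref{Proposition Regular Protomodular} to deduce that $f^{*}$ preserves jointly extremally epimorphic families. The paper's version is simply terser, leaving the reader to unpack the triangle, whereas you spell out each step explicitly.
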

\begin{proof}
	Since by Proposition~\ref{Proposition Regular Protomodular} every change-of-base functor reflects jointly extremally epimorphic families, the non-trivial implication follows from inspecting the commutative triangle
	\[
		\xymatrix@!0@R=3em@C=4em
		{
		\Pt{\C}{Y} \ar[rr]^-{f^*} \ar[rd]_-{!_Y^*} && \Pt{\C}{X} \ar[ld]^-{!_X^*}\\
		& \Pt{\C}{0}
		}
	\]
	where $f\colon {X\to Y}$ is an arbitrary morphism in $\C$ and $0$ is the initial object in $\C$.
\end{proof}

This leads to the following interpretation in terms of subobjects alone, valid in any semi-abelian category. Here, unequivocally, a normal subobject is a subobject represented by the kernel of some morphism.

\begin{theorem}\label{Theorem intersections}
	A semi-abelian category is {\rm (1)} algebraically coherent or {\rm (2)} an algebraic logos, if and only if for each object $X$, each normal subobject $K\normal X$ and each subobject $B\leq X$ such that $K\meet B=0$ and $K\join B=X$, respectively:
	\begin{enumerate}
		\item all $U$, $V\leq X$ that contain $B$ satisfy $K\meet(U\join V)=(K\meet U)\join (K\meet V)$,
		\item for every small family of subobjects $(B\leq U_i\leq X)_{i\in I}$ which admits a join, we have
		      \[
			      K\meet\bigjoin_{i\in I} U_i=\bigjoin_{i\in I}(K\meet U_i).
		      \]
	\end{enumerate}
\end{theorem}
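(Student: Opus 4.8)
The plan is to reduce both statements to properties of the kernel functors and then to translate those into the displayed lattice identities. By Proposition~\ref{Proposition Characterisation kernel functor}, together with the analogous result for algebraically coherent categories from~\cite{acc}, a semi-abelian category is an algebraic logos, respectively algebraically coherent, if and only if for every object $X$ the kernel functor $\Ker\colon\Pt{\C}{X}\to\C$ is geometric, respectively coherent. Now in a semi-abelian category $\Ker$ is a change-of-base functor in the fibration of points, hence a right adjoint, and so it preserves finite limits; it moreover preserves regular epimorphisms, by a standard fact of homological algebra, hence image factorisations. Combining this with Lemma~\ref{Lemma Subobject} and the fact that a family of arrows is jointly extremally epimorphic exactly when its family of images is, one sees that $\Ker$ is geometric precisely when it preserves small joins of subobjects, and coherent precisely when it preserves binary joins of subobjects.

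The next step is to describe subobjects explicitly. A subobject of a point $(\alpha\colon A\to X,\beta\colon X\to A)$ in $\Pt{\C}{X}$ is the same thing as a subobject $A'\leq A$ containing $B=\beta(X)$; the kernel functor sends it to $A'\meet K$, where $K=\Ker\alpha$; and joins of such subpoints agree with joins in \Sub{\C}{A}, since any $A'\leq A$ lying above a family of subpoints automatically contains $B$. Here $B$ is a complement of the normal subobject $K$, that is $K\meet B=0$ and $K\join B=A$, and conversely every complemented pair $(K\normal A,B)$ arises in this way, from the point $A\twoheadrightarrow A/K$ whose section is induced by the isomorphism $B\xrightarrow{\sim}A/K$. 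This already yields one implication of each statement, and the converse implication in the case where $B$ is a complement of $K$.

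To obtain the converse under the stated hypothesis I would transport the complemented case along a semidirect-product comparison. Given $K\normal X$ and a subobject $B\leq X$ with $K\join B=X$, form $A=K\rtimes B$ for the action of $B$ on $K$ obtained by restricting the conjugation action of $X$ on its normal subobject $K$; inside $A$ the kernel of the projection $A\to B$ is $K$, complemented by the image of the section, while the canonical comparison $\phi\colon A\to X$ restricts to the inclusions of $K$ and of $B$ and has regular image $K\join B=X$, so it is a regular epimorphism. Given a family $(B\leq U_i\leq X)_{i\in I}$ admitting a join, one applies the complemented-case identity in $A$ to the family $(\phi^{-1}(U_i))_{i\in I}$---each member contains the image of the section, and they admit the join $\phi^{-1}(\bigjoin_{i}U_i)$---and then pushes the resulting equality down to $X$ along $\phi$. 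This uses that $\phi^{-1}$ restricts to an order isomorphism from \Sub{\C}{X} onto the interval of subobjects of $A$ above $\Ker\phi$, so that it preserves the joins involved; that the direct-image operation $\phi(-)$ preserves joins, being a left adjoint; and the Frobenius identity $\phi(\phi^{-1}(U)\meet T)=U\meet\phi(T)$, valid in any regular category, applied with $T=K$ and with $\phi(K)=K$. Case~(1) is treated by the same scheme, with jointly extremally epimorphic pairs and binary joins in place of families and small joins; and when $B$ happens to be a full complement of $K$ no transport is needed, since one is then directly in the point situation.

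The step I expect to be the main obstacle is this transport: checking that the semidirect-product comparison $\phi$ is a regular epimorphism interacting correctly with $K$ and its complement inside $A$, and keeping careful track of the various joins under $\phi^{-1}$ and $\phi(-)$---in particular the existence of $\bigjoin_{i}\phi^{-1}(U_i)$ in the geometric case, where only the existence of $\bigjoin_{i}U_i$ is assumed.
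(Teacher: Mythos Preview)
Your first two paragraphs are correct and essentially reproduce the paper's proof: translate the question to preservation of (binary, respectively small) joins by the kernel functors via Proposition~\ref{Proposition Characterisation kernel functor}, then identify a point $(\alpha,\beta)$ with a complemented pair $(K\normal X,\,B\leq X)$, its subpoints with the subobjects $U\leq X$ containing $B$, and the kernel functor on subpoints with $U\mapsto K\meet U$.

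Your third paragraph, however, is unnecessary, and stems from a misreading of the statement. The hypothesis quantifies over triples $(X,K,B)$ with $K\normal X$, $B\leq X$, $K\meet B=0$ \emph{and} $K\join B=X$; that is, $B$ is always a full complement of $K$. So the converse implication is already complete at the end of your second paragraph---exactly as in the paper's proof---and the ``transport along a semidirect-product comparison'' you propose, together with the obstacle you anticipate, simply does not arise.
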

\begin{proof}
	A split extension of $B$ by $K$ is the same thing as an object $X$, together with a normal subobject $K\normal X$ and a subobject $B\leq X$ such that $K\meet B=0$ and $K\join B=X$. Indeed, if 
	\begin{equation*}
		\vcenter{
			\xymatrix{
				K \ar[r]_{\kappa} & X \ar@<0.5ex>[r]^{\alpha} & B\ar@<0.5ex>[l]^{\beta}
			}
		}
	\end{equation*}
	is a split extension, then $K\meet B=0$ is easy to see, while $K\join B=X$ is equivalent to protomodularity in any pointed regular category with binary joins. Conversely, given such $K\normal X$ and $B\leq X$, the canonical comparison $B\to X/K$ is a monomorphism because $K\meet B=0$ and an extremal epimorphism because $K\join B=X$. 
	
	Both results now follow, because a subobject of $(\alpha,\beta)$ in $\Pt{\C}{B}$ is the same thing as a subobject $U$ of $X$ that contains $B$. Furthermore, the kernel of its induced split epimorphism is $K\meet U$. Note that in (2) we are not assuming that every small family of subobjects admits a join; but when it does, the corresponding formula must hold, in accordance with Proposition~\ref{Proposition joins}. 
\end{proof}

\section{Split extension cores}\label{Section Split Extension Cores}
We give the definition of a split extension core and explain that any homological quasi-variety (or, in particular, any semi-abelian variety) has split extension cores if and only if it is an algebraic logos.

\begin{definition}\label{Definition Split Extension Core}
	In a pointed category with finite limits $\C$, given a split extension $(\alpha,\beta,\kappa)$ as in the bottom of the diagram
	\begin{equation}\label{Equation Explicit split ext core}
		\vcenter{
		\xymatrix{
		\bar X \ar@{-->}[r]^{\bar \kappa}\ar@{-->}[d]_{u} & \bar A \ar@{-->}@<0.5ex>[r]^{\bar \alpha}\ar@{-->}[dd]^{v} & B\ar@{-->}@<0.5ex>[l]^{\bar\beta}\ar@{=}[dd]\\
		S \ar@{{ >}->}[d]_{s}& & \\
		X \ar[r]_{\kappa} & A \ar@<0.5ex>[r]^{\alpha} & B\ar@<0.5ex>[l]^{\beta}
		}
		}
	\end{equation}
	and a monomorphism $s\colon{S\to X}$, a \defn{split extension core} of $s$ relative to $(\alpha,\beta,\kappa)$ is a universal (terminal) lifting as displayed with dashed arrows above.
	
	When for each split extension $(\alpha, \beta, \kappa)$ and each $S\leq X$ a split extension core exists, we say that \defn{$\C$ has split extension cores}.
\end{definition}

\begin{lemma}\label{Lemma mono}
	The morphisms $u$ and $v$ in~\eqref{Equation Explicit split ext core} are monomorphisms.
\end{lemma}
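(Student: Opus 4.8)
The plan is to deduce both statements from terminality of the split extension core, using throughout that $\kappa$, $s$ and $\bar\kappa$ (the latter being the kernel of $\bar\alpha$) are monomorphisms.

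I would first treat $v$. Form the kernel pair $\pi_1,\pi_2\colon{P\rightrightarrows\bar A}$ of $v$. Since $v\comp\pi_1=v\comp\pi_2$ and $\bar\alpha=\alpha\comp v$, the map $\alpha_P:=\bar\alpha\comp\pi_1=\bar\alpha\comp\pi_2\colon{P\to B}$ is well defined, and $\beta_P:=\langle\bar\beta,\bar\beta\rangle\colon{B\to P}$ is a section of it; taking $\kappa_P:=\ker(\alpha_P)$ yields a split extension $(\alpha_P,\beta_P,\kappa_P)$ over $B$. Each projection $\pi_i$ is a morphism of points over $B$ from $(\alpha_P,\beta_P)$ to $(\bar\alpha,\bar\beta)$, hence induces a map $k_i\colon{X_P\to\bar X}$ on kernels with $\bar\kappa\comp k_i=\pi_i\comp\kappa_P$. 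I claim that, setting $v_P:=v\comp\pi_1=v\comp\pi_2$ and $u_P:=u\comp k_1$, the tuple $(X_P,P,\kappa_P,\alpha_P,\beta_P,u_P,v_P)$ is a lifting of $s$ relative to $(\alpha,\beta,\kappa)$: the identities $\alpha\comp v_P=\alpha_P$ and $v_P\comp\beta_P=\beta$ are immediate, while $v_P\comp\kappa_P=v\comp\bar\kappa\comp k_1=\kappa\comp s\comp u\comp k_1=\kappa\comp s\comp u_P$ is exactly what shows $u_P$ lands in $S$ (this is the one place where passing to the induced maps on kernels is essential).

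Next I would check that both $(k_1,\pi_1)$ and $(k_2,\pi_2)$ are morphisms of liftings from this new lifting into the split extension core. For $(k_1,\pi_1)$ this is built into the definitions. For $(k_2,\pi_2)$ the only non-formal point is commutativity over $S$, i.e. $u\comp k_2=u_P$: since $\kappa$ and $s$ are monomorphisms it suffices to note that $\kappa\comp s\comp u\comp k_i=v\comp\bar\kappa\comp k_i=v\comp\pi_i\comp\kappa_P$ is independent of $i\in\{1,2\}$, because $v\comp\pi_1=v\comp\pi_2$. Terminality of the core then forces $(k_1,\pi_1)=(k_2,\pi_2)$, hence $\pi_1=\pi_2$, which says precisely that $v$ is a monomorphism.

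Finally, once $v$ is known to be a monomorphism, the composite $\kappa\comp s\comp u=v\comp\bar\kappa$ is a composite of monomorphisms, hence itself a monomorphism; therefore its right-hand factor $u$ is a monomorphism as well. The step I expect to be the main obstacle is the construction above — realising the kernel pair of $v$ as the middle object of a genuine lifting, in particular defining $u_P$ correctly — together with the short diagram chase giving $u\comp k_1=u\comp k_2$; everything after that is formal cancellation.
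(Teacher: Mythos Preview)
Your proposal is correct and follows essentially the same approach as the paper: both construct the kernel pair of $v$ as a point over $B$, observe that its restriction to kernels is the kernel pair of $s\comp u$ (equivalently of $u$), and invoke terminality of the split extension core to conclude $\pi_1=\pi_2$. The only cosmetic differences are that the paper packages the construction as ``the kernel pair of $v$ in $\Pt{\C}{B}$'' rather than building the lifting by hand, and obtains $u$ mono simultaneously via $\bar\pi_1=\bar\pi_2$ rather than deducing it afterwards from $v\comp\bar\kappa=\kappa\comp s\comp u$.
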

\begin{proof}
	Write $(R,\pi_1,\pi_2,e)$ for the kernel pair of $v$. Then $(\bar\alpha\comp\pi_1=\bar\alpha\comp\pi_2,e\comp\bar\beta)$ is a point over $B$, which together with $\pi_1$, $\pi_2$ and $e$ forms the kernel pair of $v$ in the category $\Pt{\C}{B}$. Its restriction $(\bar R, \bar \pi_1,\bar \pi_2,\bar e)$ to kernels is the kernel pair of $s\comp u$, which is also the kernel pair of $u$ because $s$ is a monomorphism. By universality of the split extension core $(\bar\alpha,\bar\beta,\bar\kappa)$, the morphisms $\pi_1$ and $\pi_2$ are equal, as well as $\bar \pi_1$ and $\bar \pi_2$, so that $u$ and $v$ are a monomorphisms. 
\end{proof}

\begin{theorem}\label{Thm Split Ext core vs alg logos}
	Let $\C$ be a pointed protomodular category. 
	\begin{enumerate}
		\item If $\C$ has split extension cores, then it is an algebraic logos.
		\item When $\C$ is well-powered with joins of subobjects, the conditions in {\rm (1)} are equivalent.
	\end{enumerate}
	For instance, in {\rm (2)} the category $\C$ can be a well-powered homological category which is complete or has small sums.
\end{theorem}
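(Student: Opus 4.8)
The plan is to prove both implications, in each case using Proposition~\ref{Proposition Characterisation kernel functor} to replace ``$\C$ is an algebraic logos'' by ``the kernel functors $\Ker\colon\Pt{\C}{B}\to\C$ (that is, the change-of-base functors $!_B^*$ along the initial maps) are geometric''. Since change-of-base functors always preserve finite limits, only the preservation of jointly extremally epimorphic small families is at stake, and everything hinges on the interaction of this property with split extension cores. Two small remarks will be used throughout: a singleton $\{e\}$ is a jointly extremally epimorphic family precisely when $e$ is an extremal (equivalently, as $\C$ is finitely complete, strong) epimorphism, so a geometric functor preserves extremal epimorphisms; and, by Lemma~\ref{Lemma Subobject}, the forgetful functor $\Pt{\C}{B}\to\C$ preserves and reflects monomorphisms, hence also carries and reflects (extremal epi, mono)-factorisations.

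For~(1), assume $\C$ has split extension cores and let $(w_i\colon(\alpha_i,\beta_i)\to(\alpha,\beta))_{i\in I}$ be a jointly extremally epimorphic family in $\Pt{\C}{B}$, with kernels $X=\Ker\alpha$ and $X_i=\Ker\alpha_i$. To see that the induced family $(x_i\colon X_i\to X)_{i\in I}$ is jointly extremally epimorphic, let $s\colon S\to X$ be a monomorphism through which it factors, say $x_i=s\comp y_i$, and form the split extension core $(\bar\alpha,\bar\beta,\bar\kappa)$ of $s$ relative to $(\alpha,\beta,\kappa)$, with comparisons $v\colon\bar A\to A$ and $u\colon\bar X\to S$. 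Each $w_i$ is a lifting of $s$ (with $y_i$ as comparison into $S$ and $w_i$ as comparison into $A$), so terminality factors each $w_i$ through $v$; as $v$ is a monomorphism (Lemma~\ref{Lemma mono}) and $(w_i)_{i}$ is jointly extremally epimorphic, $v$ is an isomorphism. Its restriction to kernels, which is $s\comp u$, is then an isomorphism, so $s$ --- a monomorphism that is also a split epimorphism --- is an isomorphism. Hence $(x_i)_{i}$ is jointly extremally epimorphic, $\Ker$ is geometric, and $\C$ is an algebraic logos.

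For~(2), assume moreover that $\C$ is well-powered with joins. Then each subobject poset is a complete lattice, so $\C$ has (extremal epi, mono)-factorisations; and each fibre $\Pt{\C}{B}$ is again well-powered with joins, since subpoints of a point embed into the subobjects of its domain and a join of subpoints (computed in $\C$) still contains the image of the splitting. Given a split extension $(\alpha,\beta,\kappa)$ and a monomorphism $s\colon S\to X$, let $\bar A$ be the join in $\Pt{\C}{B}$ of all subpoints $T\leq(\alpha,\beta)$ with $\Ker(T)\leq S$ as subobjects of $X$; this is a set-indexed family by well-poweredness, so the join exists. Since $\C$ is an algebraic logos, $\Ker$ is geometric and so preserves this join, whence $\Ker(\bar A)=\bigvee_T\Ker(T)\leq S$; thus $\bar A$, with its inclusion $v$ into $(\alpha,\beta)$ and the induced $u\colon\bar X\to S$, is a lifting of $s$. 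It is terminal: given any lifting $v'\colon(\bar\alpha',\bar\beta')\to(\alpha,\beta)$, factor it in $\Pt{\C}{B}$ as an extremal epimorphism $e$ followed by the inclusion $i\colon T\to(\alpha,\beta)$ of a subpoint; geometricity sends $e$ to an extremal epimorphism, and since the restriction of $v'$ to kernels factors through $s$, a diagonal fill-in shows $\Ker(T)\leq S$, so $T\leq\bar A$ and $v'$ factors through $v$, uniquely because $v$ is a monomorphism. Thus $\bar A$ is a split extension core, and the conditions of~(1) are equivalent.

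For the closing ``for instance'': a homological category is pointed protomodular, and if it is well-powered and either complete or equipped with small sums it has, respectively, all joins or small joins of subobjects --- a complete meet-semilattice being a complete lattice, and the image of a coproduct of representatives representing the join in the regular-with-sums case. I expect the main obstacle to be the terminality argument in~(2): showing that a lifting with a non-monomorphic comparison $v'$ still factors through $\bar A$ requires both the (extremal epi, mono)-factorisation provided by well-poweredness together with joins, and the fact that $\Ker$, being geometric, preserves extremal epimorphisms, so that the ``kernel inside $S$'' condition descends from the domain of the lifting to its image subpoint; the surrounding bookkeeping --- that $\Pt{\C}{B}$ is well-powered with joins and that $\Ker$ restricts to a finite-meet- and join-preserving map of complete subobject lattices --- is routine but needs to be in place.
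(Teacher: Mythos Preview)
Your argument is correct and follows essentially the same route as the paper: for~(1) you use split extension cores to show the kernel functors preserve jointly extremally epimorphic families, and for~(2) you construct the core as the join of all subpoints whose kernel lies below~$S$, using geometricity of $\Ker$ to check that the kernel of this join is still below~$S$.

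The one substantive difference is that you are more explicit than the paper about terminality in~(2): the paper's proof only records that the join dominates every \emph{monic} lifting, whereas you factor an arbitrary lifting as an extremal epimorphism followed by a subpoint inclusion and use the orthogonality of $\Ker(e)$ against $s$ to see that the image subpoint still has kernel below~$S$. This is the right idea (and is exactly the manoeuvre the paper uses later in Proposition~\ref{Proposition Action core via adjoint}), but your justification ``each subobject poset is a complete lattice, so $\C$ has (extremal epi, mono)-factorisations'' is not quite self-evident: completeness of $\Sub(X)$ as a \emph{poset} does not automatically give categorical intersections, which is what the standard construction of such factorisations needs. In the ``for instance'' cases this is harmless---homological implies regular (hence (regular epi, mono)-factorisations), and complete implies arbitrary intersections---so your argument goes through there without further comment. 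Under the bare hypotheses of~(2) you should either supply a reference for factorisations from well-poweredness plus joins, or note (as the paper implicitly does) that the issue is absorbed by the regular/complete hypotheses in practice.
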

\begin{proof}
	We first prove (1). By Proposition~\ref{Proposition Characterisation kernel functor}, it suffices to show that the kernel functors $\Ker\colon \Pt{\C}{X}\to \C$ are geometric. Take an extremally epimorphic family $(v_i)_{i\in I}$ of arrows with codomain $(\alpha,\beta)$ in $\Pt{\C}{B}$, and suppose their restriction to kernels factors through some monomorphism $s$. Then they must factor through the (monic) split extension core of $s$, which lifts to a monomorphism $m$ with codomain~$(\alpha,\beta)$ through which the family $(v_i)_{i\in I}$ factors. By assumption, $m$ is an isomorphism, hence so is $s$.
	
	For the proof of (2), consider a split extension $(\alpha,\beta,\kappa)$ and a monomorphism $s\colon{S\to X}$ as in~\eqref{Equation Explicit split ext core}. Take the collection of all monomorphisms $v_i\colon {(\alpha_i,\beta_i)\to (\alpha,\beta)}$ whose restriction to kernels factors through $s$ via some monomorphism $u_i$. By assumption, the join of the family $(v_i)_i$ forms a point over $B$ which restricts to the join of the family $(s\circ u_i)_i$, the needed split extension core of~$s$.
\end{proof}

Together with the fact that all quasi-varieties of algebras are well-powered and (co)com\-plete, Theorem~\ref{Theorem Coherent iff Geometric for Varieties} now implies:

\begin{corollary}\label{Cor Split Ext Core in Variety}
	For a homological quasi-variety of algebras $\V$, the following conditions are equivalent:
	\begin{tfae}
		\item $\V$ is algebraically coherent;
		\item $\V$ is an algebraic logos;
		\item $\V$ has split extension cores.
	\end{tfae}
	This holds, in particular, for any semi-abelian variety.\noproof
\end{corollary}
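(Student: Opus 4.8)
The plan is to assemble the two main theorems proved above, so that the corollary becomes a matter of bookkeeping. The equivalence of (i) and (ii) is precisely Theorem~\ref{Theorem Coherent iff Geometric for Varieties}, applied to the quasi-variety $\V$. For the equivalence of (ii) and (iii) the strategy is to invoke Theorem~\ref{Thm Split Ext core vs alg logos}; the only real work is to check that a homological quasi-variety satisfies its hypotheses.

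First I would note that, being homological, $\V$ is in particular a pointed protomodular category, which already yields the implication (iii) $\Rightarrow$ (ii) directly from Theorem~\ref{Thm Split Ext core vs alg logos}(1), with no additional assumptions. For the converse I would use that $\V$, as a quasi-variety of algebras, is well-powered and complete---the facts recalled immediately before the statement. A complete well-powered category has all (small) joins of subobjects, as observed in Section~\ref{Section Geom Functors}; so $\V$ is a well-powered homological category which is complete, and therefore falls under the ``for instance'' clause of Theorem~\ref{Thm Split Ext core vs alg logos}(2). That theorem then delivers (ii) $\Rightarrow$ (iii), closing the cycle.

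For the final sentence I would simply observe that a semi-abelian variety is a variety, hence a quasi-variety, and is by definition homological; so it is a homological quasi-variety and the equivalences above apply verbatim.

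There is essentially no hard step here: the corollary is a formal consequence of Theorems~\ref{Theorem Coherent iff Geometric for Varieties} and~\ref{Thm Split Ext core vs alg logos}. The only point deserving a moment's care is confirming that a homological quasi-variety meets the standing hypotheses of Theorem~\ref{Thm Split Ext core vs alg logos}(2)---well-poweredness together with the existence of joins of subobjects---which is exactly where completeness of quasi-varieties enters, together with the observation that the word ``homological'' already encodes ``pointed regular protomodular'', so that nothing beyond the cited background is required.
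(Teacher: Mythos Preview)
Your proposal is correct and follows exactly the paper's intended route: the corollary is stated with \verb|\noproof| and the sentence preceding it (``Together with the fact that all quasi-varieties of algebras are well-powered and (co)complete, Theorem~\ref{Theorem Coherent iff Geometric for Varieties} now implies:'') is the entire argument, which you have unpacked faithfully by combining Theorem~\ref{Theorem Coherent iff Geometric for Varieties} for (i)\,$\Leftrightarrow$\,(ii) and Theorem~\ref{Thm Split Ext core vs alg logos} for (ii)\,$\Leftrightarrow$\,(iii).
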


Recall that semi-abelian varieties of algebras were characterised in terms of operations and identities in the article~\cite{Bourn-Janelidze}. This characterisation was extended to homological quasi-varieties in~\cite{Roque}, which were previously characterised by other means in~\cite{Gran-Rosicky:Monadic}. Note that a semi-abelian quasi-variety is automatically a variety, because it is a Barr-exact category.

Split extension cores may be viewed in terms of adjoint functors.

\begin{proposition}\label{Proposition Action core via adjoint}
	Let $\C$ be a finitely complete category. For each point $(\alpha,\beta)$ as in~\eqref{Equation Explicit split ext core}, consider the lifting of the kernel functor $\Ker\colon \Pt{\C}{B}\to \C$ to a functor
	\[
		F\colon \Sub{\Pt{\C}{B}}{(\alpha,\beta)}\to \Sub{\C}{X}.
	\]
	If $\C$ has split extension cores, then for each point $(\alpha,\beta)$ this functor has a right adjoint. The converse is true as soon as $\C$ is regular.
\end{proposition}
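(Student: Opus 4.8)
\emph{Proof proposal.}

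The plan is to translate both sides of the equivalence into statements about largest subobjects, and then to recognise a split extension core as exactly such a largest subobject. Write $X=\Ker(\alpha,\beta)$ with $\kappa\colon X\to A$ the kernel inclusion, so that $(\alpha,\beta,\kappa)$ is a split extension. First I would make $F$ explicit: a subobject of $(\alpha,\beta)$ in $\Pt{\C}{B}$ is a monomorphism of points $v\colon(\bar\alpha,\bar\beta)\to(\alpha,\beta)$, which by Lemma~\ref{Lemma Subobject} is a monomorphism in $\C$; as the kernel functor preserves monomorphisms, $\Ker(v)\colon\bar X\to X$ is again monic, so $F$ does land in $\Sub{\C}{X}$ and is a monotone map of posets. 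A right adjoint of such a map is a monotone $G$ with $F(\bar\alpha,\bar\beta)\le S\iff(\bar\alpha,\bar\beta)\le G(S)$, and it exists precisely when, for each $S\le X$, the family of subpoints $(\bar\alpha,\bar\beta)\le(\alpha,\beta)$ with $F(\bar\alpha,\bar\beta)\le S$ has a largest element $G(S)$. The key observation is that this largest element, when it exists, coincides with the split extension core of $s\colon S\to X$ relative to $(\alpha,\beta,\kappa)$. Indeed, by Lemma~\ref{Lemma mono} a split extension core carries \emph{monic} comparison arrows $u$ and $v$, hence is an object of $\Sub{\Pt{\C}{B}}{(\alpha,\beta)}$ with $F$-image $\le S$; moreover, unwinding the definitions shows that a morphism of liftings between two such \emph{monic} liftings exists exactly when the underlying subobjects of $(\alpha,\beta)$ are comparable, and is then unique. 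Thus a terminal monic lifting is the same thing as a largest subobject of $(\alpha,\beta)$ with $F$-image $\le S$.

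This already settles the forward implication, with no recourse to regularity: if $\C$ has split extension cores, let $G(S)$ be the split extension core of $s$; it is monotone in $S$ by terminality, and the adjunction inequalities hold by the observation. The content of the converse is to upgrade ``terminal among monic liftings'' to ``terminal among \emph{all} liftings'': we must show that $G(S)$ is terminal among all liftings $\big((\alpha_1,\beta_1,\kappa_1),v_1,u_1\big)$ of $s$ (writing $X_1$ for the kernel of $\alpha_1$), where $v_1\colon A_1\to A$ need not be monic. Here regularity is used. Given such a lifting, factor $v_1=v_1'\comp e$ with $e$ a regular epimorphism and $v_1'$ a monomorphism, and equip $A_1':=\Im(v_1)$ with the point structure $(\alpha\comp v_1',\,e\comp\beta_1)$; this is a point over $B$, since $\beta=v_1\comp\beta_1=v_1'\comp e\comp\beta_1$ exhibits $\beta$ as factoring through $v_1'$, so $v_1'$ represents a subobject of $(\alpha,\beta)$ and $e$ becomes a point morphism $(\alpha_1,\beta_1)\to(\alpha_1',\beta_1')$, whose image $w:=\Ker(e)\colon X_1\to X_1'$ under the kernel functor is a \emph{regular} epimorphism: the kernel of $\alpha_1'\comp e$ is computed as the pullback of $\ker\alpha_1'$ along $e$, so $w$ is a pullback of $e$, and regular epimorphisms are pullback-stable in a regular category. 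Since $\Ker(v_1')\comp w=\Ker(v_1)=s\comp u_1$ factors through the monomorphism $s$ and $w$ is an extremal epimorphism, a diagonal fill-in shows that the monomorphism $\Ker(v_1')$ itself factors through $s$, i.e.\ $F(\alpha_1',\beta_1')\le S$, so $(\alpha_1',\beta_1')\le G(S)$. Composing the resulting comparison with $e$, and on kernels with $u_1$, produces a morphism of liftings from the given one to $G(S)$; uniqueness is automatic because the comparison arrow attached to $G(S)$ is a monomorphism by Lemma~\ref{Lemma mono}.

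The argument is essentially formal, and the one genuinely non-syntactic ingredient is the pullback-stability of regular epimorphisms, which makes $w$ a regular epimorphism. I expect the only real nuisance to be the bookkeeping in the last step---checking that the comparison is a morphism of liftings on all three levels (the object $A$, the identity on $B$, and the kernels), together with the routine verification that the structure induced on $\Im(v_1)$ is genuinely that of a point over $B$.
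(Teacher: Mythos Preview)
Your proof is correct and follows essentially the same route as the paper: for the forward implication you take $G(S)$ to be the split extension core, and for the converse you factor an arbitrary lifting through its (regular epi, mono)-image in $\Pt{\C}{B}$, observe that the kernel of the regular-epi part is a regular epimorphism (by pullback stability), and use orthogonality against $s$ to land in the domain of the adjunction. The paper's argument is terser---it simply asserts that $\bar e$ is strong---whereas you spell out the pullback-stability reason, but the strategies coincide.
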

\begin{proof}
	When $\C$ has split extension cores, the needed right adjoint $G$ sends a subobject ${S\leq X}$ to the split extension core of $S$ with respect to $(\alpha,\beta)$.
	
	Conversely, let $\C$ be a regular category and $G$ a right adjoint of $F$. Consider the diagram
	\[
		\xymatrix{
			Y \ar[rr]^{\iota} \ar[d]_{f} \ar@{->>}[dr]^{\bar e} & & C \ar@<0.5ex>[rr]^{\gamma} \ar[dd]|\hole_(.7){g} \ar@{->>}[dr]^e & & B \ar@<0.5ex>[ll]^{\delta} \ar@{=}[d] \\
			S \ar@{ >->}[d]_{s} & Y' \ar[rr]^(.3){\iota'} \ar@{ >->}[dl]^{\bar m} \ar@{-->}[l] & & C' \ar@<0.5ex>[r]^{\gamma'} \ar@{ >->}[dl]^m & B \ar@{=}[d] \ar@<0.5ex>[l]^{\delta'} \\
			X \ar[rr]_{\kappa} & & A \ar@<0.5ex>[rr]^{\alpha} & & B \ar@<0.5ex>[ll]^{\beta}
		}
	\]
	where $(s\comp f,g)$ is a morphism of split extensions over $B$ and $s$ is a monomorphism.
	
	The (regular epi, mono)-factorisation $(e,m)$ of $g$ in $\C$ yields the (regular epi, mono)-factorisation $((\bar e,e),(\bar m,m))$ of $(s\comp f,g)$, whose restriction $\bar m$ to kernels factors through $s$ because $\bar e$ is a strong epimorphism. By adjunction, $(\bar m,m)$ factors through $G(s)$ and so does $(s\comp f,g)$ by composition. Hence $G(s)$ produces the needed split extension core. 
\end{proof}

\section{Action cores and the functors \texorpdfstring{$B\flat(-)$}{}}\label{Section Action Cores}
In a semi-abelian category, via the equivalence between internal actions and split extensions, we may reformulate the concept of a split extension core in terms of actions. The resulting definition of an \emph{action core} makes sense in a wider context, but in a semi-abelian category the two concepts agree.

Let us recall~\cite{Bourn-Janelidze:Semidirect,BJK,Borceux-Semiab} that for a pointed finitely complete category $\C$ with binary coproducts and for each $B$ in $\C$ there is a monad $B\flat(-)$ on $\C$ defined as follows. For each object~$X$ in $\C$,
\begin{itemize}
	\item $B\flat X$ is defined to be the (object part of the) kernel $(B\flat X,\kappa_{B,X})$ of $\mus{1_{B}}{0}$, the unique morphism making the diagram
	      \begin{equation*}
		      \vcenter{
		      \xymatrix@!@C=8ex@R=0ex{
		      & X \ar[d]_-{\iota_2}\ar[dr]^{0} &\\
		      B\flat X \ar[r]^-{\kappa_{B,X}} & B+X \ar[r]|{\mus{1_{B}}{0}} & B\\
		      & B \ar[u]^-{\iota_1}\ar[ur]_{1_B}
		      }
		      }
	      \end{equation*}
	      in which $\iota_1$ and $\iota_2$ are the first and second coproduct inclusions, commute;
	\item $\eta_X\colon X\to B\flat X$, the component of the unit at $X$, is the unique morphism such that $\kappa_{B,X}\circ \eta_X = \iota_2$;
	\item $\mu_X$, the component of the multiplication of the monad at $X$, is the unique morphism making the diagram
	      \begin{equation*}
		      \vcenter{
		      \xymatrix{
		      B\flat(B\flat X) \ar[r]^-{\kappa_{B,B\flat X}}\ar[d]_{\mu_X} & B+(B\flat X)\ar[d]_{1_B + \kappa_{B,X}} \ar[r]^-{\mus{1_{B}}{0}} & B\ar@{=}[d]\\
		      B\flat X \ar[r]_-{\kappa_{B,X}} & B+X \ar[r]_-{\mus{1_{B}}{0}} & B
		      }
		      }
	      \end{equation*}
	      commute.
\end{itemize}
An \defn{(internal) $B$-action} $(X,\xi)$ on an object $X$ is a $B\flat(-)$-algebra structure $\xi\colon{B\flat X\to X}$ on $X$.

It turns out that if $\C$ is the category of groups, then to give an algebra $(X,\xi)$ over a monad $B\flat(-)$ is the same thing as to give a $B$-group structure: a group homomorphism $B\to \Aut(X)$ to the automorphism group of $X$. Via exponential exchange this decodes to a group $X$ with multiplication $m\colon X\times X\to X$ and unit $e\colon 1\to X$ together with a morphism $a\colon B\times X \to X$ making the diagrams
\begin{gather*}
	\xymatrix{
		B\times (B\times X) \ar[rr]^{\alpha}\ar[d]_{1_B\times a} && (B\times B) \times X\ar[d]^{m\times 1_X} \\
		B\times X \ar[r]_{a} & X & B\times X \ar[l]^{a} &
	}\\
	\vcenter{
	\xymatrix{
	B\times (X\times X) \ar[rr]^-{\langle 1_B\times \pi_1,1_B\times \pi_2\rangle}\ar[d]_{1_B\times m} && (B\times X)\times(B\times X) \ar[d]^-{a\times a}\\
	B\times X\ar[r]_-{a} & X & X\times X\ar[l]^-{m}
	}
	}
	\qquad
	\vcenter{\xymatrix@=3em{
	X \ar[r]^-{\langle e\circ!_X, 1_X\rangle}\ar@{=}[dr] & B\times X\ar[d]^-{a}\\
	& X
	}}
\end{gather*}
commute. Here $\alpha$ is the usual ``associativity'' isomorphism. 

This correspondence, which is explained in \cite{Bourn-Janelidze:Semidirect,BJK,BJK2}, can be seen as a consequence of the fact that for a semi-abelian category~$\C$, for each $B$ in $\C$ there is an equivalence of categories between the category $\Act_{B}(\C)=\C^{B\flat (-)}$ of algebras in $\C$ over the monad $B\flat(-)$, and the category $\Pt{\C}{B}$ with objects split epimorphisms in $\C$ with a chosen splitting and with codomain~$B$.

\begin{definition}
	Given an object $B$, a $B$-action $(X,\xi)$ and a subobject $S \leq X$, in~$\C$, a pointed finitely complete category with binary coproducts, the \defn{action core of $S$ relative to $X$} is the largest sub-$B$-action $(Y,\varphi)$ of $(X,\xi)$ such that $Y$ is a subobject of $S$.
\end{definition}

Note that in a semi-abelian category, the action core of $S$ is precisely the internal action corresponding to the split extension core of $S$ with respect to the split extension corresponding to $\xi$. Thus, via Theorem~\ref{Thm Split Ext core vs alg logos} we find:

\begin{proposition}\label{Prop Action cores vs alg logos}
	A well-powered semi-abelian category with joins has action cores if and only if it is an algebraic logos.\noproof
\end{proposition}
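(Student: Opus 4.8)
The plan is to deduce Proposition~\ref{Prop Action cores vs alg logos} from Theorem~\ref{Thm Split Ext core vs alg logos} by transporting everything across the equivalence $\Act_B(\C)\simeq\Pt{\C}{B}$. The key observation is that a semi-abelian category is in particular pointed protomodular, and it is well-powered with joins by hypothesis; hence by Theorem~\ref{Thm Split Ext core vs alg logos}(2) the property ``$\C$ has split extension cores'' is equivalent to ``$\C$ is an algebraic logos''. So it suffices to show that, for a semi-abelian category, having action cores is the same as having split extension cores.

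First I would make precise that, under the equivalence $\Act_B(\C)\simeq\Pt{\C}{B}$, subobjects correspond to subobjects: a sub-$B$-action $(Y,\varphi)\leq(X,\xi)$ corresponds exactly to a subobject $(\bar\alpha,\bar\beta)\leq(\alpha,\beta)$ in $\Pt{\C}{B}$, and the kernel functor $\Ker\colon\Pt{\C}{B}\to\C$ corresponds to the forgetful functor $\Act_B(\C)\to\C$ sending $(X,\xi)$ to $X$. (This is the content of the remark just before the definition of action core, and it is essentially the statement that the equivalence of categories is compatible with the two forgetful functors to $\C$, up to the standard identifications; since equivalences preserve monomorphisms and terminal objects in slices, subobject lattices are carried to subobject lattices isomorphically.) Consequently, given $S\leq X$, a largest sub-$B$-action of $(X,\xi)$ whose underlying object is below $S$ is precisely the same datum as a terminal lifting in diagram~\eqref{Equation Explicit split ext core} for the split extension corresponding to $\xi$ — that is, a split extension core of $s\colon S\to X$. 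Running this over all $B$, all actions $(X,\xi)$, and all $S\leq X$ shows that $\C$ has action cores if and only if $\C$ has split extension cores.

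Then I would simply chain the two equivalences: $\C$ has action cores $\iff$ $\C$ has split extension cores $\iff$ $\C$ is an algebraic logos, the last step being Theorem~\ref{Thm Split Ext core vs alg logos} applied to the well-powered-with-joins semi-abelian category $\C$. I do not expect a genuine obstacle here; the only point requiring a little care is spelling out that the equivalence $\Act_B(\C)\simeq\Pt{\C}{B}$ identifies the relevant subobject posets and commutes (up to natural isomorphism) with the two ``underlying object'' functors, so that the universal property defining an action core is literally carried to the one defining a split extension core. Since this compatibility is built into the standard construction of the equivalence in~\cite{Bourn-Janelidze:Semidirect,BJK}, a one-line appeal to it suffices, which is presumably why the statement is marked \noproof in the excerpt.
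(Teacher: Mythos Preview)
Your proposal is correct and follows exactly the route the paper intends: the sentence immediately preceding the proposition observes that an action core is precisely the internal action corresponding to a split extension core under the equivalence $\Act_B(\C)\simeq\Pt{\C}{B}$, and then invokes Theorem~\ref{Thm Split Ext core vs alg logos}(2); this is why the result carries \verb|\noproof|. Your added remark that the equivalence commutes with the forgetful/kernel functors and hence identifies the relevant subobject posets is the right justification for the one step the paper leaves implicit.
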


We note that also the concept of an algebraic logos itself may be recast in terms of the action monad $B\flat (-)$. As it turns out, the reasoning in~\cite{acc} can be copied in order to prove the following result.

\begin{theorem}\label{Theorem Bemol}
	Let $\C$ be a homological category with small coproducts. $\C$ is an algebraic logos if and only if for every $B$, the functor $B\flat(-)\colon {\C\to \C}$ preserves jointly extremally epimorphic families of arrows.\noproof
\end{theorem}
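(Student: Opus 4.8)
The plan is to reduce the statement, via Proposition~\ref{Proposition Characterisation kernel functor}, to the behaviour of the kernel functors $\Ker\colon\Pt{\C}{B}\to\C$, and then to exploit that $B\flat(-)$ is the composite $\Ker\comp L_B$, where $L_B\colon\C\to\Pt{\C}{B}$ sends $X$ to the point $(\mus{1_B}{0}\colon B+X\to B,\iota_1)$ and is left adjoint to $\Ker$; its counit at a point $P=(\alpha\colon A\to B,\beta)$ with $K=\Ker(\alpha)$ is the comparison morphism $\epsilon_P=\mus{\beta}{\kappa}\colon B+K\to A$, which is a regular epimorphism because $\C$ is protomodular. Since $\C$ is homological, $\Ker$ preserves finite limits and regular epimorphisms, and $\C$ and $\Pt{\C}{B}$ have small joins; so by Proposition~\ref{Proposition joins} the algebraic logos condition is (via Proposition~\ref{Proposition Characterisation kernel functor}) exactly the condition that each $\Ker\colon\Pt{\C}{B}\to\C$ preserves small joins of subobjects. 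Throughout I will use freely that a subobject of a point $P=(\alpha,\beta)$ is a subobject $\beta\leq U\leq A$ with $\Ker(U)=U\meet K$, and that joins in $\Pt{\C}{B}$ of such subobjects agree with their joins in $\C$ (which automatically contain $\beta$), as in the proof of Theorem~\ref{Theorem intersections}.

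\textbf{The forward direction.} First I would check that $L_B$ preserves jointly extremally epimorphic families: in a regular category with coproducts such a family is precisely one whose cotuple from the coproduct is a regular epimorphism, and $L_B$ sends $\langle u_i\rangle_i\colon\coprod_iU_i\to X$ to $\nabla_B+\langle u_i\rangle_i$ on $\coprod_i(B+U_i)\cong(\coprod_iB)+(\coprod_iU_i)$, a coproduct of two regular epimorphisms, hence a regular epimorphism; Lemma~\ref{Lemma Subobject} transports the conclusion from $\C$ back into $\Pt{\C}{B}$. Consequently, if $\C$ is an algebraic logos then each $\Ker$ is geometric by Proposition~\ref{Proposition Characterisation kernel functor}, and $B\flat(-)=\Ker\comp L_B$ preserves jointly extremally epimorphic families.

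\textbf{The converse.} Assume that $B\flat(-)$ preserves jointly extremally epimorphic families for every $B$; I must show that $(\bigjoin_iU_i)\meet K=\bigjoin_i(U_i\meet K)$ whenever $\beta\leq U_i\leq A$, where only the inequality ``$\leq$'' needs an argument. The computation rests on two facts. First, each $U_i\meet K$ is \emph{stable} under the canonical $B$-action $\xi_P=\Ker(\epsilon_P)\colon B\flat K\to K$, because $U_i$ contains $\beta$ and so determines a sub-point of $P$; naturality of the counit $\epsilon$ then gives $\xi_P\comp B\flat(j_i)=j_i\comp\xi_i$ for the inclusion $j_i\colon U_i\meet K\hookrightarrow K$. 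Second, writing $\lambda\colon\coprod_i(U_i\meet K)\to K$ for the cotuple of the inclusions, one has $\bigjoin_iU_i=\Im\bigl(\epsilon_P\comp(1_B+\lambda)\bigr)$ as a subobject of $A$ (each $U_i$ is the image of $\mus{\beta}{\kappa}\comp(1_B+j_i)$ since the counits are regular epimorphisms, and the copies of $B$ collapse along the split epimorphism $\coprod_iB\twoheadrightarrow B$); applying the projection formula in the regular category $\C$, together with $\Ker\bigl(\mus{1_B}{0}\comp(1_B+\lambda)\bigr)=B\flat\bigl(\coprod_i(U_i\meet K)\bigr)$, this yields
\[
	\Bigl(\bigjoin_iU_i\Bigr)\meet K=\Im\Bigl(\xi_P\comp B\flat(\lambda)\colon B\flat\bigl(\textstyle\coprod_i(U_i\meet K)\bigr)\to K\Bigr).
\]

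The remaining step, and the one where the hypothesis on $B\flat(-)$ is genuinely used, is the lemma that a join $K^\vee=\bigjoin_iN_i$ of $B$-stable subobjects $N_i\leq K$ is again $B$-stable. Granting it, I factor $\lambda$ through $K^\vee=\bigjoin_i(U_i\meet K)$ as $\lambda=\nu\comp\bar\lambda$ with $\nu\colon K^\vee\hookrightarrow K$; stability of $K^\vee$ gives $\xi_P\comp B\flat(\nu)=\nu\comp\xi'$, so $\xi_P\comp B\flat(\lambda)$ factors through $\nu$ and $(\bigjoin_iU_i)\meet K\leq K^\vee$, as wanted. To prove the lemma, present $K^\vee$ by the regular epimorphism $\coprod_iN_i\twoheadrightarrow K^\vee$ and observe that the inclusions $(N_i\hookrightarrow K^\vee)_i$ form a jointly extremally epimorphic family; applying $B\flat(-)$ gives a jointly extremally epimorphic family $(B\flat N_i\to B\flat K^\vee)_i$, along each member of which $\xi_P\comp B\flat(\nu)$ restricts to $\xi_P\comp B\flat(\nu_i)$, a morphism factoring through $N_i\leq K^\vee$ by stability of $N_i$; since a morphism that becomes factorisable through a fixed monomorphism after precomposition with every member of a jointly extremally epimorphic family already factors through that monomorphism (pull the monomorphism back along the morphism and use joint extremal epimorphy), $\xi_P\comp B\flat(\nu)$ factors through $\nu$. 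I expect this final lemma—the interplay between $B\flat(-)$ preserving jointly extremally epimorphic families and closure of $B$-stable subobjects under joins—to be the main point; everything else is a rearrangement of the constructions of $B\flat(-)$ and of the kernel functor, following the pattern of the analogous result for algebraically coherent categories in~\cite{acc}.
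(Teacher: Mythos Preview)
Your argument is correct and follows precisely the strategy the paper intends: it simply defers to~\cite{acc}, and what you have written is the natural adaptation of that paper's proof (for jointly extremally epimorphic \emph{pairs}) to arbitrary small families. The decomposition $B\flat(-)=\Ker\comp L_B$, the use of Proposition~\ref{Proposition Characterisation kernel functor} to reduce to kernel functors, and---for the converse---the identification of sub-points with $B$-stable subobjects together with the lemma that joins of $B$-stable subobjects are $B$-stable (this being exactly where the hypothesis on $B\flat(-)$ enters), are all as in~\cite{acc}.
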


\section{Normal cores; joins of normal subobjects}\label{Section Normal Cores}
In this section we discuss normal subobjects; throughout, we work in a semi-abelian category $\C$. We prove that unlike action cores, normal cores exist in any semi-abelian variety. This is closely related to the fact that there, joins of normal subobjects are normal. 

\begin{definition}
	Given a subobject $S\leq X$ in $\C$, a normal subobject $N\normal X$ is called a \defn{normal core} of $S\leq X$ if and only if $N\leq S$, terminal amongst all $M\normal X$ such that $M\leq S$.
	
	We say that \defn{$\C$ has normal cores} when each subobject $S\leq X$ in $\C$ has a normal core.
\end{definition}

\begin{proposition}\label{Proposition Normal core reformulation}
	For a subobject $S\leq X$ in $\C$, a normal core is a universal object amongst those subobjects of $S$ which are normal both in $S$ and in $X$.
\end{proposition}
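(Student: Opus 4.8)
The plan is to reduce the statement to the assertion that two classes of subobjects coincide. By definition, a normal core of $S\leq X$ is a terminal object of the class
\[
\mathcal{N}_X(S)=\{M\leq X\mid M\normal X\text{ and }M\leq S\},
\]
whereas the proposition asks it to be terminal in the class
\[
\mathcal{N}_{S,X}(S)=\{M\leq S\mid M\normal S\text{ and }M\normal X\},
\]
both ordered by the subobject relation. Since terminal objects of a poset are unique up to isomorphism, it suffices to show $\mathcal{N}_X(S)=\mathcal{N}_{S,X}(S)$ as sub-posets of $\Sub{X}{\C}$.

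The inclusion $\mathcal{N}_{S,X}(S)\subseteq\mathcal{N}_X(S)$ is immediate. For the converse, I would invoke the classical fact that normality restricts along subobjects: if $N\normal X$ and $N\leq S\leq X$, then $N\normal S$. To prove this, choose $q\colon X\to Q$ with $N=\Ker q$, write $i\colon S\to X$ for the inclusion, and note that an arrow $f$ with codomain $S$ satisfies $q\comp i\comp f=0$ precisely when $i\comp f$ factors through $N\hookrightarrow X$; as $N\leq S$ and $i$ is a monomorphism, this happens precisely when $f$ factors through $N\hookrightarrow S$. Hence $N$, viewed as a subobject of $S$, is the kernel of $q\comp i$, so $N\normal S$. (This is the same bookkeeping already used implicitly when one observes, in the proof of Theorem~\ref{Theorem intersections}, that $K\meet U$ is a kernel.)

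Putting the two things together: every $M\in\mathcal{N}_X(S)$ is normal in $X$ and contained in $S$, hence by the restriction lemma also normal in $S$, so $M\in\mathcal{N}_{S,X}(S)$. Thus the two classes agree, and a normal core of $S\leq X$ is exactly a universal (terminal) object among the subobjects of $S$ that are normal both in $S$ and in $X$. There is no real obstacle here: the only content is the restriction-of-normality lemma, which holds already in any pointed finitely complete category, and \emph{a fortiori} in the semi-abelian category $\C$.
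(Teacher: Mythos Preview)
Your proof is correct and is precisely the content the paper has in mind when it writes ``This is immediate from the definition.'' You have merely made explicit the one nontrivial ingredient---that a kernel $N\normal X$ with $N\leq S\leq X$ restricts to a kernel of $S$---which the paper treats as standard; the two arguments are the same in substance.
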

\begin{proof}
	This is immediate from the definition.
\end{proof}

\begin{proposition}
	If $S\leq X$ has a normal core $N\normal X$, then the induced quotients determine a pullback
	\[
		\xymatrix{S \ar@{ >->}[r]^-s \ar@{->>}[d]_{q} & X \ar@{->>}[d]^r \\
		S/N \ar[r]_-{\bar s} & X/N}
	\]
	where the bottom arrow $\bar s$ is a monomorphism with trivial normal core.
\end{proposition}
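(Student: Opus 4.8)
The plan is to establish the pullback by a direct diagram chase using the characterisation of normal cores from Proposition~\ref{Proposition Normal core reformulation} together with the standard fact that in a semi-abelian category a commutative square of the displayed type, with $s$ and $\bar s$ monomorphisms and $q$, $r$ regular epimorphisms with $\Ker(q)=\Ker(r)=N$, is a pullback precisely when the induced comparison $S\to S/N\times_{X/N}X$ is an isomorphism; equivalently, since both legs are already known, it suffices to check that the square is both a pushout of regular epis along a mono in the relevant sense and that $N$ is ``as normal as possible'' inside $S$ over $X$. Concretely, I would form the pullback $P$ of $\bar s$ along $r$, obtain the factorisation $S\to P\to X$, note that $S\to P$ is a monomorphism (being a factor of the mono $s$) and that $P\to X$ is a monomorphism (being a pullback of the mono $\bar s$), so $S\leq P\leq X$; the point is then to show $S=P$.

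First I would identify the normal subobjects involved: $N=\Ker(r)\normal X$, and because pullbacks of kernels are kernels, $\Ker(P\to X/N)$—which is $N$ viewed inside $P$ via the pullback projection—is normal in $P$, hence $N\normal P$, and of course $N\leq S\leq P$ with $N\normal S$ as well (it is the kernel of $q$). Now $N$ is normal in $P$ and $N\leq S\leq P$; moreover the quotient $P/N$ is, by construction of the pullback, isomorphic to the image of $\bar s$, i.e. to $S/N$, and under this identification the composite $S\to P\to P/N\cong S/N$ is $q$ itself. So $S$ and $P$ have the same image in $X/N$ and the same kernel $N$ over that image; by the Short Five Lemma (valid in the homological category $\C$) applied to the morphism of short exact sequences $0\to N\to S\to S/N\to 0$ and $0\to N\to P\to P/N\to 0$, the inclusion $S\to P$ is an isomorphism. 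This gives the pullback.

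It remains to see that $\bar s$ has trivial normal core. Suppose $M\normal X/N$ with $M\leq \bar s$, i.e. $M\leq S/N\leq X/N$. Pull $M$ back along $r\colon X\to X/N$ to get $\bar M\normal X$ (preimages of normal subobjects along regular epis are normal in the semi-abelian setting), and note $N\leq \bar M$ since $N=\Ker(r)$ maps into $0\leq M$; also $\bar M\leq S$ because $M\leq S/N$ and $S$ is the pullback, so $r^{-1}(S/N)=S$. Thus $\bar M$ is a normal subobject of $X$ contained in $S$, hence $\bar M\leq N$ by terminality of the normal core $N$; combined with $N\leq\bar M$ this gives $\bar M=N$, whence $M=r(\bar M)=r(N)=0$. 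So the only normal subobject of $X/N$ below $\bar s$ is $0$, i.e. the normal core of $\bar s$ is trivial.

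The main obstacle I anticipate is bookkeeping the normality assertions under images and preimages along the regular epi $r$—specifically, that the pullback of a normal monomorphism is normal and that the direct image of a normal subobject containing $\Ker(r)$ is normal—and making sure the Short Five Lemma is applied to genuinely commuting morphisms of short exact sequences with the correct identifications $P/N\cong S/N$; these are all standard in a semi-abelian (or even homological) category, but they are where the argument could go wrong if the comparison maps are not set up carefully.
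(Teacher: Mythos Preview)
Your argument is correct and follows essentially the same route as the paper. The paper dispatches the pullback property and the fact that $\bar s$ is a monomorphism in one line by citing protomodularity (Lemma~4.2.5 in Borceux--Bourn), whereas you unpack this into an explicit Short Five Lemma argument; since the Short Five Lemma is precisely the pointed manifestation of protomodularity, this is the same mathematics with more detail shown. Your treatment of the trivial normal core of $\bar s$ is virtually identical to the paper's: pull $M\normal X/N$ back along $r$, use the pullback square to land inside $S$, and invoke terminality of $N$. One small remark: among the ``obstacles'' you list, the claim that direct images along $r$ of normal subobjects containing $\Ker(r)$ are normal is not actually needed---you only use that preimages of normals are normal and that $r(r^{-1}(M))=M$ for a regular epimorphism $r$.
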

\begin{proof}
	The facts that the square is a pullback and $\bar s$ is a monomorphism are consequences of the protomodularity of $\C$: Lemma~4.2.5 in~\cite{Borceux-Bourn}, for instance. We need to prove that the normal core of $\bar s$ is trivial. Suppose $M\leq S/N$ with $M\normal X/N$. Then the inverse image $r^{-1}(M)\normal X$ factors through~$S$, because the square is a pullback. So it is contained in the normal core $N$ of $S$, which implies that~$M$ is trivial.
\end{proof}

\begin{lemma}\label{Lemma Clots}
	$N\leq X$ is normal in $X$ if and only if the conjugation split extension in the bottom line of the diagram
	\[
		\xymatrix{N \ar@{-->}[r] \ar@{{ >}->}[d] & Y \ar@{-->}@<.5ex>[r] \ar@{-->}[d] & X \ar@{-->}@<.5ex>[l] \ar@{=}[d]\\
		X \ar[r]_-{\langle1_X,0\rangle} & X\times X \ar@<.5ex>[r]^-{\pi_2} & X \ar@<.5ex>[l]^-{\langle1_X,1_X\rangle}}
	\]
	restricts to a split extension of $X$ by $N$ as in the diagram's top line.
\end{lemma}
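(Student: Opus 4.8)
The plan is to unwind what "$N \leq X$ is normal" means in the semi-abelian setting and match it against the restriction condition in the diagram. Recall that $N \normal X$ precisely when the subobject $N \leq X$ is a kernel, equivalently — and this is the classical group-theoretic picture internalised — when $N$ is stable under the conjugation action of $X$ on itself. The conjugation action is exactly the internal action corresponding (under the equivalence between actions and split extensions recalled in Section~\ref{Section Action Cores}) to the split extension in the bottom line of the displayed diagram: the split epimorphism $\pi_2 \colon X \times X \to X$ with splitting the diagonal $\langle 1_X, 1_X \rangle$ and kernel $\langle 1_X, 0 \rangle \colon X \to X \times X$. So the statement to prove is: $N$ is a kernel in $X$ if and only if the sub-point of $(\pi_2, \langle 1_X,1_X\rangle)$ generated by $N \leq X$ (sitting inside the kernel copy of $X$) restricts to a split extension of $X$ by $N$, i.e. has the whole of $X$ as its quotient over $B = X$ and has $N$ as its kernel.

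First I would set up the forward direction. Assume $N \normal X$, say $N = \Ker(p)$ for some $p \colon X \to Q$. I want to produce the dashed split extension. The natural candidate for the object $Y$ in the top line is the pullback (or rather the subobject of $X \times X$) consisting of those pairs whose "difference'' lies in $N$; concretely, form the kernel of the composite $X \times X \xrightarrow{\ \langle \pi_1, \pi_2\rangle\ } X \times X \to Q \times Q \xrightarrow{} Q$ where the last map is subtraction, or equivalently take $Y$ to be the pullback of $p$ along $p \circ \pi_1$ versus $p \circ \pi_2$; this $Y$ contains both the diagonal $\langle 1_X, 1_X \rangle$ (giving the splitting $X \to Y$) and the kernel copy of $N$ (since for $n \in N$ the pair $(n,0)$ has $p(n) = 0 = p(0)$). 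Restricting $\pi_2$ to $Y$ gives a split epimorphism onto $X$, and I must check its kernel is exactly $N$: the kernel consists of pairs $(x, 0)$ with $p(x) = p(0) = 0$, i.e. $x \in \Ker(p) = N$. That $Y$ is the \emph{largest} subobject of $X \times X$ over $X$ restricting to $N$ — hence that this is genuinely "the'' restriction — follows because any sub-point containing $N$ in its kernel must, by protomodularity (the kernel plus the splitting jointly generate), land inside $Y$.

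For the converse, assume the conjugation split extension restricts to a split extension of $X$ by $N$ as displayed, i.e. there is a point $(\bar\alpha, \bar\beta)$ over $X$ with kernel $N$, together with a monomorphism of points $Y \hookrightarrow X \times X$ compatible with the inclusions $N \hookrightarrow X$ (into the kernel) and $1_X$ (on the base). Then the kernel $N$ of $\bar\alpha$ is a normal subobject of $Y$; pushing along the monomorphism $Y \to X \times X$ and using that monomorphisms of points reflect/preserve the relevant structure, one sees $N$ is normal in $X \times X$, hence its image under the first projection — which is $N \leq X$ — is normal in $X$. Here the cleanest route is to use that $\langle 1_X, 0\rangle \colon X \to X \times X$ is a kernel and that $N$, being a subobject of this kernel which is normal in a sub-point lying over all of $X$, is itself a kernel in $X$: one exhibits $N = \Ker(X \xrightarrow{} Y/N')$ for the appropriate quotient, or invokes the characterisation of normal subobjects via the existence of such a restricted conjugation extension directly. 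The main obstacle, and the step I would be most careful about, is the converse: making precise "restricts to a split extension'' as a sub-point of the conjugation point, and then extracting normality of $N$ in $X$ from normality in the sub-object $Y$ — this requires the protomodular fact that a subobject of $X$ is normal exactly when it arises as the kernel-part of some sub-point of the conjugation point $(\pi_2, \langle 1_X, 1_X\rangle)$ whose base is all of $X$, which is essentially the internal reformulation of "closed under conjugation'' and should be available from the theory of internal actions (or proved directly using the Bourn-style characterisation of normal monomorphisms).
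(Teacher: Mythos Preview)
The paper's own proof is a one-line citation: the lemma is precisely the statement that \emph{kernels coincide with clots} in a semi-abelian category, and this is referred to~\cite{Janelidze-Marki-Ursini2,MM-NC}. Your approach, by contrast, tries to prove this equivalence directly. The forward direction is fine and more explicit than the paper: if $N=\Ker(p)$, taking $Y$ to be the kernel pair of $p$ gives exactly the required sub-point of the conjugation extension, with the diagonal as splitting and $N$ as kernel.

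The converse, however, has a genuine gap. Your first route---``$N$ is normal in $Y$, hence normal in $X\times X$, hence its projection is normal in $X$''---fails at the first step: a normal subobject of a subobject need not be normal in the ambient object (already in groups). The second route, exhibiting $N$ as $\Ker(X\to Y/N')$, is left undefined (what is $N'$?). The third route, ``invoking the characterisation of normal subobjects via the existence of such a restricted conjugation extension'', is circular: that characterisation \emph{is} the lemma.

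What you are missing is the following observation, which is the substance behind the clot argument. The object $Y$, as a sub-point of $(\pi_2,\langle 1_X,1_X\rangle)$ over $X$, is a \emph{reflexive relation} on $X$ (it is a subobject of $X\times X$ containing the diagonal). Since a semi-abelian category is Mal'tsev, every reflexive relation is an equivalence relation; since it is Barr-exact, every equivalence relation is effective. Hence $Y$ is the kernel pair of some $q\colon X\to Q$, and $N$, being the kernel of either projection $Y\to X$, equals $\Ker(q)$. This gives normality of $N$ in $X$ cleanly and closes the gap.
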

\begin{proof}
	This amounts to the equivalence between kernels and clots which holds in semi-abelian categories, and which follows from the results of~\cite{Janelidze-Marki-Ursini2, MM-NC}.
\end{proof}

\begin{proposition}\label{Proposition Normal core vs. split extension core}
	Any semi-abelian category with split extension cores has normal cores.
\end{proposition}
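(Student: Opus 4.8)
The plan is to realise the normal core of an arbitrary subobject $S\leq X$ as a suitable split extension core, using Lemma~\ref{Lemma Clots} as the dictionary between ``$N$ normal in $X$'' and ``the conjugation split extension restricts to a split extension of $X$ by $N$''. Fix a monomorphism $s\colon S\to X$. The conjugation split extension of Lemma~\ref{Lemma Clots} has total object $X\times X$, structural split epimorphism $\pi_2$ with splitting $\langle1_X,1_X\rangle$, and kernel $\langle1_X,0\rangle\colon X\to X\times X$; its kernel object is $X$ itself, so $S\leq X$ is a subobject of that kernel. Since $\C$ has split extension cores, I form the split extension core $(\bar\alpha,\bar\beta,\bar\kappa)$ of $s$ relative to this split extension, as in~\eqref{Equation Explicit split ext core}, with kernel object $\bar X$, total object $\bar A$, and comparison morphisms $u\colon\bar X\to S$ and $v\colon\bar A\to X\times X$; by Lemma~\ref{Lemma mono} both $u$ and $v$ are monomorphisms. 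I then claim that $N:=\bar X$, viewed as a subobject of $X$ via the monomorphism $s\comp u$, is the normal core of $S\leq X$. Clearly $N\leq S$. For $N\normal X$: the split extension $(\bar\alpha,\bar\beta,\bar\kappa)$ together with the monomorphism $v$ is precisely a restriction of the conjugation split extension to a split extension of $X$ by $N$ --- the commutativities of~\eqref{Equation Explicit split ext core} read here $\pi_2\comp v=\bar\alpha$, $v\comp\bar\beta=\langle1_X,1_X\rangle$ and $v\comp\bar\kappa=\langle1_X,0\rangle\comp s\comp u$, which is exactly the top line of the diagram in Lemma~\ref{Lemma Clots} --- so that lemma gives $N\normal X$.

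For maximality, let $M\normal X$ be any normal subobject with $M\leq S$. By the forward direction of Lemma~\ref{Lemma Clots}, the conjugation split extension restricts to a split extension $(\gamma,\delta,\lambda)$ of $X$ by $M$, included in $X\times X$ via a monomorphism; since the kernel $M$ of $\gamma$, regarded as a subobject of $X$, factors through $s$, this restriction is a lifting of $s$ in the sense of Definition~\ref{Definition Split Extension Core}. By terminality of the split extension core there is a morphism of such liftings from $(\gamma,\delta,\lambda)$ to $(\bar\alpha,\bar\beta,\bar\kappa)$; its component on kernels is a morphism $M\to\bar X$ compatible with the given factorisations through $s$, and since $M\to S$ is monic and factors as $u$ followed by $M\to\bar X$, this component is itself a monomorphism. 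Hence $M\leq\bar X=N$ as subobjects of $S$, and therefore of $X$. Thus $N$ is terminal amongst the normal subobjects of $X$ contained in $S$, i.e.\ a normal core of $S\leq X$; as $S\leq X$ was arbitrary, $\C$ has normal cores.

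The one genuinely delicate point is the diagram bookkeeping: checking that the split extension core of the conjugation split extension is a ``restriction of the conjugation split extension'' in the exact sense required by Lemma~\ref{Lemma Clots}, and, in the converse direction, recognising the conjugation restriction attached to a normal $M$ as a lifting of $s$. Once the two diagram shapes are aligned, both implications are purely formal; no regularity or exactness beyond what already enters Lemma~\ref{Lemma Clots} is needed.
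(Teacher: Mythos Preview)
Your proposal is correct and is precisely the unpacking of the paper's one-line proof, which reads in full: ``Via Lemma~\ref{Lemma Clots}, a normal core is an instance of a split extension core.'' You have spelt out exactly what this means --- form the split extension core of $s$ relative to the conjugation split extension, use Lemma~\ref{Lemma Clots} in one direction to see that the resulting kernel is normal, and in the other direction to see that any competing normal $M\leq S$ yields a lifting that must factor through the split extension core --- so there is no difference of approach. One trivial slip of phrasing: in the maximality argument you write that $M\to S$ ``factors as $u$ followed by $M\to\bar X$'', whereas you mean $M\to\bar X$ followed by $u$; the intended argument is clear and correct.
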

\begin{proof}
	Via Lemma~\ref{Lemma Clots}, a normal core is an instance of a split extension core.
\end{proof}

The following extends Corollary~3.21 in~\cite{acc} from binary to arbitrary (small) joins. Here a key difference between algebraically coherent and algebraically geometric categories (between algebraic pre-logoi and algebraic logoi) appears: while in any semi-abelian category a join of two normal subobjects is again normal \cite{Borceux-Semiab,Huq}, the condition that \emph{arbitrary} joins of normal subobjects are normal does not come for free.

\begin{proposition}\label{Proposition Normal cores vs joins}
	A well-powered semi-abelian category $\C$ with joins has normal cores if and only if in $\C$ joins of normal subobjects are normal.
\end{proposition}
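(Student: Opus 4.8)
The plan is to prove both implications directly; the only content is the dictionary between ``largest normal subobject of $X$ contained in a given $S\leq X$'' and ``join of a family of normal subobjects of $X$''.

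I would first do the easier direction. Assume joins of normal subobjects are normal, and fix a subobject $s\colon S\leq X$. Since $\C$ is well-powered, the collection of normal subobjects $M\normal X$ with $M\leq S$ is a small set; it is moreover nonempty, as it contains $0\normal X$. Because $\C$ has joins, the join $N=\bigjoin\{M\mid M\normal X,\ M\leq S\}\leq X$ exists, and it is normal in $X$ by hypothesis. As $S$ is an upper bound of this family, the least upper bound $N$ satisfies $N\leq S$; and by construction $N$ is terminal amongst the $M\normal X$ with $M\leq S$. Hence $N$ is a normal core of $S\leq X$, and $\C$ has normal cores.

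For the converse, assume $\C$ has normal cores and let $(N_i\normal X)_{i\in I}$ be a small family of normal subobjects. Since $\C$ has joins, the join $N=\bigjoin_{i\in I}N_i\leq X$ exists, and by assumption it has a normal core $C\normal X$. For each $i\in I$, the subobject $N_i$ is normal in $X$ and satisfies $N_i\leq N$, hence $N_i\leq C$ by terminality of the normal core; as this holds for all $i$, the least upper bound satisfies $N=\bigjoin_{i\in I}N_i\leq C$. On the other hand $C\leq N$ by definition of the normal core. Therefore $C=N$, and since $C$ is normal in $X$, so is $N$.

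I do not expect a real obstacle. The only points worth care are that ``well-powered with joins'' is precisely what guarantees the relevant families of normal subobjects are small sets, so that the joins invoked above are available; that the zero subobject keeps these families nonempty; and that the step $N\leq C$ in the converse uses only the defining maximality of the normal core, not yet the conclusion that joins of normal subobjects are normal (which is what we are proving).
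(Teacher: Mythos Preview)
Your proof is correct and follows essentially the same approach as the paper's: in one direction the normal core of $S\leq X$ is constructed as the join of all normal subobjects of $X$ contained in $S$, and in the other the join $N=\bigjoin_i N_i$ is shown to coincide with its own normal core. Your version merely spells out in more detail the points the paper leaves implicit (well-poweredness making the family small, nonemptiness via $0$, and the two inequalities $N\leq C$ and $C\leq N$).
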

\begin{proof}
	If $S\leq X$, then its normal core is obtained as the join of the set of all subobjects of $S$ which are normal in $X$. Conversely, given a set of normal subobjects of $X$, the normal core $N$ of their join $S$ coincides with $S$ by the universality of $S$. Hence $S$ is normal in~$X$.
\end{proof}

\begin{example}
	We give an example of a semi-abelian category whose joins of normal subobjects need no longer be normal: we consider the category of abelian groups with an additional operation $\omega$ of arity $\aleph_0$ which satisfies $\omega(0,\dots,0,\dots)=0$.
	
	First we notice that a subalgebra $N\leq X$ is normal if and only if for all $\alpha\in X^{\N}$ and $\beta\in N^{\N}$ we have $\omega(\alpha+\beta)-\omega(\alpha)\in N$. Indeed, the equivalence relation $R$ on~$X$ defined by
	\[
		(x,y)\in R \quad \Leftrightarrow \quad y-x\in N
	\]
	is a congruence when $(\alpha,\gamma)\in R^{\N}$ implies that $(\omega(\alpha),\omega(\gamma))=(\omega(\alpha),\omega(\alpha+(\gamma-\alpha)))$ is in $R$, which means that $\omega(\alpha+(\gamma-\alpha))-\omega(\alpha)$ is in $N$.
	
	We consider the abelian group $X=\Z_2^{\N}\times \Z$, whose elements we denote $(x,\bar x)=((x_m,\bar x))_{m\in \N}$, equipped with the operation $\omega\colon X^\N\to X$ which takes
	\[
		\alpha=((\alpha_n,\bar\alpha_n))_{n\in \N}=((\alpha_{mn},\bar\alpha_n))_{m, n\in \N}
	\]
	and sends it to 
	\begin{align*}
		\begin{cases}
			(0,0) & \text{if $\{(\alpha_{n},\bar\alpha_n) \mid n\in \N\}$ is finite,} \\
			(0,0) & \text{if $\bar\alpha_n=0$ for all $n\in \N$,}                     \\
			(0,1) & \text{otherwise.}
		\end{cases}
	\end{align*}
	For each $i\in \N$, the restriction of $\omega$ to the subgroup 
	\[
		N_i\coloneq\{(x,0)\in X\mid m\geq i \Rightarrow x_m=0\}
	\]
	of $X$ is necessarily zero. Each $N_i$ is a normal subobject of $X$, because for all $\alpha\in X^{\N}$ and $\beta\in N_i^{\N}$, by the common bound on the factors of $\beta$, the sequences $\alpha$ and $\alpha+\beta$ have the same image under $\omega$. 
	
	On the other hand, the union $N\coloneq \bigcup_{i\in \N}N_i$ is a subalgebra of $X$ which need not be normal. To see this, we take $\alpha=(0,1)$ and $\beta=((\delta_n,0))_{n\in \N}$ where $\delta_{mn}$ is $1$ when $n=m$ and zero otherwise: then $\omega(\alpha+\beta)-\omega(\alpha)=(0,1)$, which is not in $N$. 
\end{example}

In a varietal context, the difference disappears: Theorem~\ref{Thm directed colimits} below states that in any semi-abelian variety of algebras, arbitrary joins of normal subobjects are normal.

\begin{lemma}\label{Lemma Cosmash directed colimit}
	In a semi-abelian category where directed colimits commute with finite limits, each co-smash product functor $(-)\cosmash X$ preserves directed colimits. As a consequence, the Higgins commutator $[-,X]$ preserves directed joins of subobjects of $X$.
\end{lemma}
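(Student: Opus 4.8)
The plan is to prove the first statement by a formal computation with universal constructions, and to deduce the second by combining it with the stability of the (regular epi, mono)-factorisation under directed colimits.

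\emph{First statement.} Recall that $A\cosmash X$ is the kernel of the canonical comparison morphism $\theta_{A,X}\colon A+X\to A\times X$, so that the functor $(-)\cosmash X$ is obtained by applying the kernel functor to the natural transformation $\theta_{(-),X}\colon (-)+X\Rightarrow (-)\times X$. Now $(-)+X\colon \C\to\C$ preserves all connected colimits — in particular directed ones — because a coproduct is a colimit and colimits commute with colimits, the constant summand $X$ being absorbed since a directed index category is connected. The functor $(-)\times X$ preserves directed colimits by hypothesis, being a finite limit; and taking the kernel of a morphism is again a finite limit construction, so it too commutes with directed colimits by hypothesis. Putting these together, for a directed diagram $(A_i)_{i\in I}$ in $\C$ we obtain
\[
	\Bigl(\colim_{i\in I}A_i\Bigr)\cosmash X=\Ker\Bigl(\colim_{i\in I}\theta_{A_i,X}\Bigr)=\colim_{i\in I}\Ker(\theta_{A_i,X})=\colim_{i\in I}(A_i\cosmash X).
\]
(Directed colimits are of course assumed to exist here, which is implicit in the hypothesis.)

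\emph{Second statement.} Fix a directed family of subobjects $(A_i\leq X)_{i\in I}$; its transition morphisms are automatically monomorphisms. Two auxiliary facts about directed colimits in $\C$ will be used, both consequences of the hypothesis: \textbf{(a)} a directed colimit of monomorphisms with monomorphic transition maps is a monomorphism (test on kernel pairs, using cofinality of the diagonal $I\to I\times I$); \textbf{(b)} a directed colimit of regular epimorphisms is a regular epimorphism (write each such map as the coequaliser of its kernel pair, and use that directed colimits commute both with those coequalisers and, by hypothesis, with the kernel pairs). From \textbf{(a)} the directed join $\bigjoin_{i\in I}A_i$ is represented by the induced monomorphism $\colim_{i\in I}A_i\to X$; and since $([A_i,X])_{i\in I}$ is a directed family of subobjects of $X$ by monotonicity of the Higgins commutator, the same fact gives $\bigjoin_{i\in I}[A_i,X]=\colim_{i\in I}[A_i,X]$.

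It remains to identify $\bigl[\colim_{i}A_i,X\bigr]$ with $\colim_{i}[A_i,X]$. By definition $\bigl[\colim_i A_i,X\bigr]$ is the image of the composite $(\colim_i A_i)\cosmash X\to(\colim_i A_i)+X\to X$ (the first map being the co-smash inclusion $\kappa$, the second being induced by the mono $\colim_i A_i\to X$ and $1_X$), which, by the first statement together with the stability of coproducts under directed colimits, is the colimit over $i$ of the composites $A_i\cosmash X\to A_i+X\to X$. Applying the functorial (regular epi, mono)-factorisation to this directed diagram of arrows into the fixed object $X$ and passing to the colimit, \textbf{(b)} shows that the left leg remains a regular epimorphism and \textbf{(a)} that the right leg remains a monomorphism, so the colimit factorisation $(\colim_i A_i)\cosmash X\twoheadrightarrow\colim_i[A_i,X]\hookrightarrow X$ is again a (regular epi, mono)-factorisation. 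Hence $\bigl[\colim_i A_i,X\bigr]=\colim_i[A_i,X]=\bigjoin_{i\in I}[A_i,X]$, as claimed. The main work is concentrated in the auxiliary facts \textbf{(a)} and \textbf{(b)} — that is, in verifying that monomorphisms, regular epimorphisms, and therefore image factorisations, are stable under directed colimits in $\C$; granting these, both statements follow formally.
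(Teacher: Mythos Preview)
Your argument is correct and follows the same route as the paper: compute $(-)\cosmash X$ as the kernel of $(-)+X\to(-)\times X$, observe that each ingredient commutes with directed colimits, and then deduce the statement about $[-,X]$ from preservation of (regular epi, mono)-factorisations under directed colimits. The only difference is one of detail: the paper compresses your auxiliary facts \textbf{(a)} and \textbf{(b)} into the single sentence ``the join of the regular images of a directed family is the regular image of its directed colimit,'' whereas you spell out why monomorphisms and regular epimorphisms are stable under directed colimits in this setting.
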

\begin{proof}
	This a direct consequence of the definitions: for any object $N$, we have a short exact sequence
	\[
		\xymatrix{
		0 \ar[r] & N\cosmash X \ar[r]^-{\iota_{N,X}} & N+X \ar[r] & N\times X \ar[r] & 0,
		}
	\]
	where both $(-)+X$ and $(-)\times X$ preserve directed colimits, as do kernels; furthermore, if $N\leq X$ is represented by $n\colon {N\to X}$, then $[N,X]$ is the regular image of $\mus{n}{1_X}\comp \iota_{N,X}$ as explained in~\cite{MM-NC}. The join of the regular images of a directed family is the regular image of its directed colimit.
\end{proof}

\begin{theorem}\label{Thm directed colimits}
	In any well-powered semi-abelian category with joins where directed colimits commute with finite limits, normal cores exist.
	
	In particular, normal cores exist in all semi-abelian varieties of algebras, and joins of normal subobjects are normal.
\end{theorem}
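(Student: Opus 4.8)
The plan is to deduce this from Proposition~\ref{Proposition Normal cores vs joins}: in a well-powered semi-abelian category with joins, the existence of normal cores is equivalent to the statement that joins of normal subobjects are normal, so it suffices to prove the latter under the hypothesis that directed colimits commute with finite limits. The ``in particular'' clause will then follow by observing that every semi-abelian variety of algebras is well-powered and (co)complete --- hence has all joins --- and that in any variety directed colimits commute with finite limits, so that the abstract statement applies.

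The core of the argument is therefore: given a small family $(N_i\normal X)_{i\in I}$ of normal subobjects, $\bigvee_{i\in I}N_i$ is normal in $X$. First I would recall the commutator criterion for normality: by the equivalence between kernels and clots in a semi-abelian category (Lemma~\ref{Lemma Clots}, \cite{Janelidze-Marki-Ursini2,MM-NC}), a subobject $N\leq X$ is normal if and only if the Higgins commutator $[N,X]$ is contained in $N$. Next I would settle the case of a \emph{directed} family: if $(N_i)_{i\in I}$ is directed with join $N=\bigvee_{i\in I}N_i$, then Lemma~\ref{Lemma Cosmash directed colimit} yields $[N,X]=\bigvee_{i\in I}[N_i,X]$; since each $N_i$ is normal we have $[N_i,X]\leq N_i\leq N$, whence $[N,X]\leq N$ and $N$ is normal. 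Finally I would reduce the general case to the directed one: for an arbitrary family $(N_i)_{i\in I}$ of normal subobjects, the family $\bigl(\bigvee_{i\in J}N_i\bigr)_{J\in\Pf(I)}$ of finite joins is directed, each of its members is normal (a finite join of normal subobjects is normal --- binary joins are, by~\cite{Borceux-Semiab,Huq}, and one iterates), and $\bigvee_{i\in I}N_i=\bigvee_{J\in\Pf(I)}\bigvee_{i\in J}N_i$, so the directed case applies; hence Proposition~\ref{Proposition Normal cores vs joins} gives normal cores.

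I expect the only real obstacle to be a matter of care rather than depth: checking that Lemma~\ref{Lemma Cosmash directed colimit} delivers exactly the identity $[\bigvee_i N_i,X]=\bigvee_i[N_i,X]$ needed above, and that the Higgins commutator criterion for normality is genuinely available in the abstract semi-abelian setting, with no hidden use of exactness beyond what is assumed; one also wants to confirm that the hypothesis ``directed colimits commute with finite limits'' is precisely what makes the co-smash product functors $(-)\cosmash X$ preserve directed colimits, which is what Lemma~\ref{Lemma Cosmash directed colimit} rests on. Well-poweredness keeps $\Pf(I)$ and all joins in play small, so no size difficulty arises.
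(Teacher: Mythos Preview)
Your proposal is correct and follows essentially the same route as the paper: reduce to ``joins of normal subobjects are normal'' via Proposition~\ref{Proposition Normal cores vs joins}, reduce arbitrary joins to directed joins using that finite joins of normal subobjects are normal, and handle the directed case with the Higgins commutator criterion $[N,X]\leq N$ together with Lemma~\ref{Lemma Cosmash directed colimit}. One small attribution point: the Higgins commutator criterion for normality comes directly from~\cite{MM-NC}, not from Lemma~\ref{Lemma Clots} (which concerns the clot characterisation rather than the commutator inequality), so you should cite~\cite{MM-NC} alone there.
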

\begin{proof}
	Via Proposition~\ref{Proposition Normal cores vs joins}, it suffices to prove that a join of normal subobjects is normal. Since in any semi-abelian category finite joins of normal subobjects are normal, it suffices to prove that joins of directed sets of normal subobjects are still normal, as in Proposition~\ref{Proposition geom vs coh}. We use the characterisation of normality of a subobject via Higgins commutators proved in~\cite{MM-NC} which says that $N\leq X$ is normal in $X$ if and only if $[N,X]\leq N$. Then we only need to show that
	\[
		\bigl[\bigjoin_{i\in I}N_i,X\bigr]=\bigjoin_{i\in I}[N_i,X]
	\]
	for any directed set $(N_i\normal X)_{i\in I}$. This is a consequence of the fact that under the given assumptions, Lemma~\ref{Lemma Cosmash directed colimit} applies.
\end{proof}

Note the huge difference in strength between existence of normal cores and existence of split extension cores: in a semi-abelian variety, the former condition is always true, whereas the latter has non-trivial consequences---some of which we discuss in the next section.

\section{Equivalent conditions; consequences}\label{Section Equivalent conditions}
Combining the foregoing results, we find:

\begin{theorem}\label{Main theorem}
	For a well-powered semi-abelian category $\C$ with sums, the following conditions are equivalent:
	\begin{tfae}
		\item $\C$ is an algebraic logos;
		\item $\C$ has split extension cores;
		\item $\C$ has action cores;
		\item for every $B$ in $\C$, the functor $B\flat(-)\colon {\C\to \C}$ is geometric.\noproof
	\end{tfae}
\end{theorem}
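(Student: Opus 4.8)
The plan is to obtain Theorem~\ref{Main theorem} as a compilation of results already established, by verifying separately that each of conditions~(ii), (iii) and~(iv) is equivalent to~(i). The first task is to check that a well-powered semi-abelian category $\C$ with sums meets all the standing hypotheses those results require: it is pointed and protomodular, in fact homological; it is well-powered by assumption; it has small coproducts; and --- the one point worth spelling out --- it has small joins of subobjects, since any regular category with sums has small joins (as recalled after Definition~\ref{Definition Geometric Category}). Granting this, the equivalence (i)$\Leftrightarrow$(ii) is precisely Theorem~\ref{Thm Split Ext core vs alg logos}: part~(1) gives (ii)$\Rightarrow$(i) for any pointed protomodular category, and part~(2) gives the converse under well-poweredness together with joins of subobjects, both of which $\C$ enjoys.

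Next, (i)$\Leftrightarrow$(iii) is Proposition~\ref{Prop Action cores vs alg logos}, applied to $\C$ as a well-powered semi-abelian category with joins; alternatively one reads off (ii)$\Leftrightarrow$(iii) directly from the equivalence of categories $\Act_B(\C)\simeq\Pt{\C}{B}$, under which --- as noted following the definition of action core --- an action core of $S\leq X$ relative to a $B$-action $(X,\xi)$ is exactly the internal action corresponding to a split extension core of $S$ relative to the split extension classified by $\xi$, so that one family of such cores exists for all inputs iff the other does. Finally, (i)$\Leftrightarrow$(iv) is Theorem~\ref{Theorem Bemol}, applied to $\C$ viewed as a homological category with small coproducts. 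Here I would flag that Theorem~\ref{Theorem Bemol} is phrased in terms of each $B\flat(-)$ preserving jointly extremally epimorphic families, whereas~(iv) asks for the formally stronger ``geometric''; the equivalence therefore rests on that formulation (the finite-limit clause of ``being geometric'' not being what drives the implication), and the cleanest write-up simply reads~(iv) through Theorem~\ref{Theorem Bemol}.

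Because every implication is quoted from a previously established statement, there is no genuinely new obstacle in this proof; the mathematical substance lives in the earlier results --- especially in the construction of a split extension core of $s$ as the join of all points over $B$ whose restriction to kernels factors through $s$ (Theorem~\ref{Thm Split Ext core vs alg logos}(2)), and in Theorem~\ref{Theorem Bemol}, itself obtained by transporting the methods of~\cite{acc}. Thus the only thing genuinely requiring attention is the hypothesis bookkeeping described above: confirming that ``well-powered semi-abelian with sums'' simultaneously supplies protomodularity, well-poweredness, small coproducts and small joins, so that Theorem~\ref{Thm Split Ext core vs alg logos}, Proposition~\ref{Prop Action cores vs alg logos} and Theorem~\ref{Theorem Bemol} are all applicable. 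Once that is in place, the four conditions close into a cycle, e.g.\ (i)$\Rightarrow$(ii)$\Rightarrow$(iii)$\Rightarrow$(i) together with (i)$\Leftrightarrow$(iv).
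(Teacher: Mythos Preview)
Your proposal is correct and matches the paper's own treatment: the theorem is stated with \verb|\noproof| immediately after the phrase ``Combining the foregoing results, we find'', so the paper intends exactly the compilation you describe, drawing on Theorem~\ref{Thm Split Ext core vs alg logos}, Proposition~\ref{Prop Action cores vs alg logos} and Theorem~\ref{Theorem Bemol}, with the hypothesis check (well-powered semi-abelian with sums $\Rightarrow$ homological, well-powered, small coproducts, small joins) being the only bookkeeping. Your observation that condition~(iv) is literally stronger than the conclusion of Theorem~\ref{Theorem Bemol} --- ``geometric'' includes finite-limit preservation, which $B\flat(-)$ need not enjoy --- is a fair reading of a slight looseness in the paper's phrasing; in context~(iv) should be understood via Theorem~\ref{Theorem Bemol}, as you suggest.
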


We now work towards Theorem~\ref{Theorem (FWACC)}, a refinement of Theorem~6.27 in~\cite{acc} saying that any well-powered semi-abelian algebraic logos with joins is \emph{fibrewise algebraically cartesian closed}. In particular, such algebraic logoi have centralisers.

\begin{lemma}\label{lemma:strong action=>pullback along split epi has right adjoint}
	Let $\C$ be a pointed protomodular category. If~$\C$ has split extension cores, then for each split epimorphism $p\colon E\to B$, the induced functor
	\[
		p^*\colon {\Pt{\C}{B}\to \Pt{\C}{E}}
	\]
	has a right adjoint.
\end{lemma}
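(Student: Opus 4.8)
The plan is to build the right adjoint by hand, object by object. Fix a splitting $s\colon B\to E$ of $p$. To give a right adjoint to $p^{*}$ it suffices to construct, for each point $(\gamma\colon C\to E,\delta)$ in $\Pt{\C}{E}$, an object $G(\gamma,\delta)$ of $\Pt{\C}{B}$ together with a morphism $\varepsilon_{(\gamma,\delta)}\colon p^{*}G(\gamma,\delta)\to(\gamma,\delta)$ that is terminal among morphisms into $(\gamma,\delta)$ from objects of the form $p^{*}(\alpha,\beta)$. It is useful to keep in mind the identification $\Pt{\C}{E}\simeq\Pt{\Pt{\C}{B}}{(p,s)}$ (a point over $E$ is the same thing as a point over the point $(p,s)$ in the pointed protomodular category $\Pt{\C}{B}$), under which $p^{*}$ becomes the change-of-base functor along $(p,s)\to 1$, i.e.\ the functor $\mathbf A\mapsto (p,s)\times\mathbf A$; this explains conceptually why such a right adjoint should exist (it is a fibrewise form of algebraic exponentiation), but the construction is cleanest carried out directly inside~$\C$.

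The construction of $G(\gamma,\delta)$ rests on split extension cores, preceded by a bookkeeping step. For any $(\alpha\colon A\to B,\beta)$ in $\Pt{\C}{B}$, the total object of $p^{*}(\alpha,\beta)$ is the pullback $E\times_{B}A$, in which — this is one of the standard reformulations of protomodularity — the kernel $\Ker\alpha$ of the projection to $E$ and the section $E$ are jointly strongly epimorphic; hence a morphism $f\colon p^{*}(\alpha,\beta)\to(\gamma,\delta)$ in $\Pt{\C}{E}$ is determined by, and (by compatibility with the $E$-action on $\Ker\alpha$ induced through $p$) freely determined by, its restriction to these two subobjects, the restriction to the section being forced to equal $\delta$. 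Precomposing $f$ with $\langle s\alpha,1_{A}\rangle\colon A\to E\times_{B}A$ turns it into a morphism of split extensions over $B$ from $(\alpha,\beta)$ to the composite point $(p\gamma,\delta s)$ whose restriction to kernels factors through $\Ker\gamma\leq\Ker(p\gamma)$; the extra condition singling out those $g$ that arise in this way is that this restriction be, in addition, invariant under the part of the conjugation action carried by $\Ker p$. I would then realise this invariance condition as a subobject condition: concretely, form the product $(\gamma,\delta)\times_{E}(sp)^{*}(\gamma,\delta)$ in $\Pt{\C}{E}$ — whose kernel is $\Ker\gamma\times\Ker\gamma$ — take the split extension core of the diagonal inclusion $\Ker\gamma\to\Ker\gamma\times\Ker\gamma$ relative to this point (which exists by hypothesis), and let $G(\gamma,\delta)$ be its restriction along $s$. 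Lemma~\ref{Lemma mono} ensures the relevant comparison maps are monomorphisms, and the universal property of the split extension core translates precisely into a natural bijection $\Pt{\C}{E}(p^{*}(\alpha,\beta),(\gamma,\delta))\cong\Pt{\C}{B}((\alpha,\beta),G(\gamma,\delta))$, the counit being the comparison of the core followed by the projection onto $(\gamma,\delta)$.

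The main obstacle is precisely the bookkeeping step, or rather the two appeals to protomodularity it needs: first, that the kernel and the section of the split epimorphism $E\times_{B}A\to E$ are jointly strongly epimorphic, so that a morphism out of $E\times_{B}A$ is pinned down by its restrictions; and second, the harder direction, that an arbitrary compatible pair of restrictions genuinely assembles into a morphism $E\times_{B}A\to C$ over $E$. The latter is in effect the correspondence between (suitably restricted) split extensions over $E$ and equivariant maps on kernels, and carrying it out with only pointedness and protomodularity at one's disposal — as opposed to invoking the action–split extension equivalence of the semi-abelian setting — is where the real care is needed. Once this is done, identifying the "invariance" condition with the chosen split extension core, checking that $p^{*}G(\gamma,\delta)$ is recovered from it (using that a point over $E$ whose kernel is acted on trivially by $\Ker p$ lies in the image of $p^{*}$), and verifying naturality and functoriality of $G$, are all routine.
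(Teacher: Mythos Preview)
Your construction is exactly the paper's: form the split extension core of the diagonal $\langle 1_Y,1_Y\rangle\colon Y\to Y\times Y$ (with $Y=\Ker\gamma$) relative to the product $(\gamma,\delta)\times_{E}(sp)^{*}(\gamma,\delta)$ in $\Pt{\C}{E}$, and define $G(\gamma,\delta)$ by applying $s^{*}$ to the resulting point. So at the level of the object produced, you and the paper agree.

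Where you diverge is in the verification, and here your emphasis is inverted relative to the paper. You frame the argument as a hom-set bijection: characterise which morphisms $(\alpha,\beta)\to(p\gamma,\delta s)$ over $B$ arise from morphisms $p^{*}(\alpha,\beta)\to(\gamma,\delta)$ over $E$, phrase the answer as an ``invariance under $\Ker p$'' condition, and then identify that condition with the core. The step you flag as the real obstacle --- assembling a morphism $E\times_{B}A\to C$ from a compatible pair of restrictions, in a merely pointed protomodular category --- is genuinely delicate (there is no action monad to lean on), and you have not actually carried it out. The paper simply does not need this step. Instead it proves directly that the core $(\bar\gamma,\bar\delta,\bar\iota)$ is isomorphic to its own image under $(sp)^{*}$; since $(sp)^{*}=p^{*}s^{*}$, this says $(\bar\gamma,\bar\delta,\bar\iota)\cong p^{*}G(\gamma,\delta)$, and the counit is then just the first projection $\pi_{1}\comp v$ of the core inclusion. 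Universality follows because for any $(f,g)\colon p^{*}(\alpha,\beta,\kappa)\to(\gamma,\delta,\iota)$ one can pair $(f,g)$ with $(sp)^{*}(f,g)$ --- using $(sp)^{*}p^{*}\cong p^{*}$ --- to land in the product with kernel restriction on the diagonal, and then invoke the core's universal property; no reconstruction of a map from restrictions is required.

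In short: the step you call ``routine'' (that $p^{*}G(\gamma,\delta)$ recovers the core) is the crux of the paper's argument and uses protomodularity in a specific way (pullbacks reflect monomorphisms, and $(\bar\iota,\bar\delta)$ is jointly strongly epimorphic), while the step you worry about is bypassed entirely. Reorganising your proof along these lines removes the gap.
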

\begin{proof}
	We will actually exhibit a right adjoint for the corresponding functor
	\[
		p^*\colon {\SplExt{\C}{B}\to \SplExt{\C}{E}},
	\]
	between the categories of (isomorphism classes of) split extensions over~$B$ and over~$E$ respectively. The result follows by the equivalence between split extensions and points.
	
	Suppose that $p\colon E\to B$ is a morphism in $\C$ with section $s$. Consider a split extension
	\[
		\xymatrix{
			Y \ar[r]^{\iota} & D \ar@<.5ex>[r]^{\gamma} & E \ar@<.5ex>[l]^{\delta}
		}
	\]
	and its image
	\[
		\xymatrix{
			Y \ar[r]^{\iota'} & D' \ar@<.5ex>[r]^{\gamma'} & E \ar@<.5ex>[l]^{\delta'}
		}
	\]
	under the change-of-base functor $(s\comp p)^*$. Consider the split extension core diagram of~$\langle 1_Y,1_Y\rangle$ with respect to the product of $(\gamma,\delta,\iota)$ and $(\gamma',\delta',\iota')$ in the fibre over $E$:
	\[
		\xymatrix@R=2em@C=5em{
		\bar Y \ar[r]^{\bar\iota} \ar[d]_{u} & \bar D \ar@<0.5ex>[r]^{\bar\gamma}\ar[dd]^{v} & E \ar@<0.5ex>[l]^{\bar\delta} \ar@{=}[dd] \\
		Y \ar@{{ >}->}[d]_{\langle 1_Y,1_Y\rangle} & & \\
		Y\times Y \ar[r]^-{\iota\times \iota'} & D\times_E D' \ar@<0.5ex>[r]^-{\gamma\circ \pi_1=\gamma'\circ \pi_2} & E \ar@<0.5ex>[l]^-{\langle \delta,\delta'\rangle}
		}
	\]
	It is clear that the assignment
	\[
		C\colon\SplExt{\C}{E}\to \SplExt{\C}{E}\colon(\gamma,\delta,\iota) \mapsto (\bar\gamma,\bar\delta,\bar\iota)
	\]
	is functorial. Let us show that the composite $s^*\comp C$ provides the desired right adjoint of $p^*$.
	
	We first prove that $(\bar\gamma,\bar\delta,\bar\iota)$ is isomorphic to its image
	\[
		\xymatrix{
			\bar Y \ar[r]^{\bar\iota'} & \bar D' \ar@<.5ex>[r]^{\bar\gamma'} & E \ar@<.5ex>[l]^{\bar\delta'}
		}
	\]
	under $(s\comp p)^*$. It suffices to consider the following diagram:
	\[
		\xymatrix@R=3em@C=5em{
		\bar Y \ar[r]^{\bar\iota} \ar@{=}[d] & \bar D \ar@{-->}[d]^t \ar@/_4ex/[dd]_(.3){\pi_1\comp v} \ar@<.5ex>[r]^{\bar\gamma} & E \ar@<.5ex>[l]^{\bar\delta} \ar@{=}[d] \\
		\bar Y \ar[r]_(.4){\bar\iota'} \ar@{{ >}->}[d]_{u} & \bar D' \ar[d]^{(s\comp p)^*(\pi_1\comp v)} \ar@<.5ex>[r]^{\bar\gamma'} & E \ar@<.5ex>[l]^{\bar\delta'} \ar@{=}[d] \\
		Y \ar[r]^{\iota} & D \ar@<.5ex>[r]^{\gamma} & E \ar@<.5ex>[l]^{\delta}
		}
	\]
	Since by Lemma~\ref{Lemma mono} $u$ is a monomorphism, so is $(s\comp p)^*(\pi_1\comp v)$ by protomodularity: the square $(s\comp p)^*(\pi_1\comp v)\comp\bar\iota'=\iota\comp u$ is a pullback, and pullbacks in a protomodular category reflect monomorphisms~\cite{Bourn1991}. On the other hand, the pair $(\bar\iota,\bar\delta)$ is jointly strongly epimorphic by protomodularity, so there is a unique $t\colon \bar D \to \bar D'$ such that
	\[
		(s\comp p)^*(\pi_1\comp v)\circ t =\pi_1\comp v,\qquad t\circ\bar\iota=\bar\iota',\qquad t\circ\bar\delta=\bar\delta'.
	\]
	In particular, the upper part of the diagram gives a morphism of split extensions. Hence, by protomodularity, $t$ is an isomorphism.
	
	The morphism $(u,\pi_1\comp v)\colon p^*s^*C(\gamma,\delta,\iota)=p^*s^*(\bar\gamma,\bar\delta,\bar\iota)\cong(\bar\gamma,\bar\delta,\bar\iota)\to(\gamma,\delta,\iota)$ is the component at $(\gamma,\delta,\iota)$ of the adjunction counit $\epsilon$. To see this, we first observe that, for each
	\[
		\xymatrix{
			X \ar[r]^{\kappa} & A \ar@<.5ex>[r]^{\alpha} & B \ar@<.5ex>[l]^{\beta}
		}
	\]
	in $\SplExt{\C}{B}$, $(s\comp p)^*p^*(\alpha,\beta,\kappa)\cong p^*(\alpha,\beta,\kappa)$, hence $Cp^*(\alpha,\beta,\kappa)\cong p^*(\alpha,\beta,\kappa)$. Let us now prove that $\epsilon$ enjoys the universal property of a counit. For $(\alpha,\beta,\kappa)$ in $\SplExt{\C}{B}$ as above, consider $(f,g)\colon {p^*(\alpha,\beta,\kappa)\to(\gamma,\delta,\iota)}$. Then we have 
	\[
		\xymatrix@C=4em{
		p^*(\alpha,\beta,\kappa)\cong (s\circ p)^*p^*(\alpha,\beta,\kappa) \ar[r]^-{(s\circ p)^*(f,g)} & (s\circ p)^*(\gamma,\delta,\iota)=(\gamma',\delta',\iota')
		}
	\]
	whose restriction to kernels is still $f$. This induces a unique morphism
	\[
		{p^*(\alpha,\beta,\kappa)\to C(\gamma,\delta,\iota)}=(\bar\gamma,\bar\delta,\bar\iota).
	\]
	Applying $s^*$ to it, recalling that $s^*p^*\cong 1_{\SplExt{\C}{B}}$ we obtain
	\[
		(\bar f,\bar g)\colon {(\alpha,\beta,\kappa)\cong s^*p^*(\alpha,\beta,\kappa)\to s^*C(\gamma,\delta,\iota)}
	\]
	such that $\epsilon_{(\gamma,\delta,\iota)}\circ p^*(\bar f,\bar g)=(f,g)$.
\end{proof}

Whenever a finitely complete category $\C$ satisfies the conclusion of Lemma~\ref{lemma:strong action=>pullback along split epi has right adjoint}---for each split epimorphism $p\colon E\to B$, the functor $p^*\colon {\Pt{\C}{B}\to \Pt{\C}{E}}$ has a right adjoint---it is called a \defn{fibrewise algebraically cartesian closed} category~\cite{GrayPhD,Gray2012,Bourn-Gray}. When $\C$ is semi-abelian, the conditions
\begin{tfae}
	\item $\C$ is fibrewise algebraically cartesian closed;
	\item for each split epimorphism $p\colon E\to B$, the functor $p^{*}\colon\act{E}\to \act{B}$ has a right adjoint;
	\item for each $B$, the category $\act{B}$ has centralisers
\end{tfae}
are equivalent, so that we obtain:

\begin{theorem}\label{Theorem (FWACC)}
	Every pointed protomodular category admitting split extension cores is
	fibrewise algebraically cartesian closed.
	
	In particular, let $\C$ be a well-powered semi-abelian category with joins. If $\C$ is an algebraic logos, then $\C$ is a fibrewise algebraically cartesian closed category. This is equivalent to saying that for each object $B$ in~$\C$, the category $\act{B}$ has centralisers. When this happens, the category $\C \simeq \act{0}$ itself has centralisers.\noproof
\end{theorem}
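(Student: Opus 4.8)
The plan is to assemble the theorem from results already in hand, the analytic content being concentrated in Lemma~\ref{lemma:strong action=>pullback along split epi has right adjoint}. For the first assertion, observe that it is essentially a restatement: the conclusion of that lemma---that $p^{*}\colon\Pt{\C}{B}\to\Pt{\C}{E}$ admits a right adjoint for every split epimorphism $p\colon E\to B$---is by definition what it means for a finitely complete category to be fibrewise algebraically cartesian closed. So any pointed protomodular category with split extension cores is fibrewise algebraically cartesian closed, with nothing further to verify.

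For the ``in particular'', I would first recall that every semi-abelian category is pointed and protomodular (indeed homological), so the first assertion applies to it as soon as it is known to have split extension cores; and this is exactly where the hypotheses ``well-powered'' and ``with joins'' enter. Since a semi-abelian category is homological, Theorem~\ref{Thm Split Ext core vs alg logos}(2) shows that a well-powered semi-abelian category with joins which is an algebraic logos has split extension cores (one could equally cite Theorem~\ref{Main theorem} or Proposition~\ref{Prop Action cores vs alg logos} together with the coincidence of action cores and split extension cores in the semi-abelian setting). Combining this with the first assertion yields that such a category is fibrewise algebraically cartesian closed.

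Next I would invoke the equivalence recorded just before the theorem---and referenced to \cite{GrayPhD,Gray2012,Bourn-Gray}---between $\C$ being fibrewise algebraically cartesian closed, the relevant change-of-base functors between categories of internal actions having right adjoints, and each category $\act{B}$ having centralisers; the passage from points to actions goes through the equivalence $\act{B}\simeq\Pt{\C}{B}$. This gives the desired reformulation in terms of centralisers. For the last sentence I would specialise to $B=0$: a point over the zero object carries no information, since $\alpha\colon A\to 0$ and $\beta\colon 0\to A$ are forced and $\alpha\comp\beta=1_0$ is automatic, so $\Pt{\C}{0}\simeq\C$; equivalently $0\flat X\cong X$ naturally and $0\flat(-)$ is the identity monad, whence $\act{0}=\C^{0\flat(-)}\simeq\C$. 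Thus the $B=0$ instance of the centraliser condition says precisely that $\C$ itself has centralisers.

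As for the main obstacle: there is, honestly, no real one left at this stage. The substantive work sits in Lemma~\ref{lemma:strong action=>pullback along split epi has right adjoint}, where the right adjoint is built explicitly from the split extension core of a diagonal and the verification of the counit's universal property, and in the cited equivalence between the point-theoretic and the action/centraliser formulations; the proof of the theorem is then the routine act of chaining these together while checking that semi-abelian categories meet the hypotheses at each link.
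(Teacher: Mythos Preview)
Your proposal is correct and follows exactly the route the paper intends: the theorem is stated with \noproof precisely because it is assembled from Lemma~\ref{lemma:strong action=>pullback along split epi has right adjoint}, the definition of fibrewise algebraic cartesian closedness given just before it, Theorem~\ref{Thm Split Ext core vs alg logos}(2), and the cited equivalence (i)--(iii) for semi-abelian categories. You have spelled out in full the chain the paper leaves implicit, including the identification $\act{0}\simeq\C$.
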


Note that, unlike Theorem~6.27 in~\cite{acc} in the algebraically coherent context, this does not depend on preservation of finite limits by directed colimits; instead, we require well-poweredness and the mere existence of (small) joins, which follows for instance when arbitrary (small) limits or colimits exist.

\begin{problem}
Find an example of a semi-abelian variety which is fibrewise algebraically cartesian closed but not algebraically coherent.
\end{problem}

Since any algebraic logos is algebraically coherent, trivially all the consequences of algebraic coherence are also consequences of any of the above equivalent conditions, and so combining various results from~\cite{acc} and the current paper we obtain:

\begin{theorem}\label{Theorem: semi-abelian consequences of existence of action cores}
	Let $\C$ be a semi-abelian category. If $\C$ is an algebraic logos, then
	\begin{itemize}
		\item all change-of-base functors of the fibration of points preserve Huq and Higgins commutators, normal closures and cokernels;
		\item the Huq, Higgins and Ursini/Smith~\cite{Mantovani:Ursini} commutators coincide for normal subobjects;
		\item $\C$ is \emph{peri-abelian}~\cite{Bourn-Peri}: change-of-base functors of the fibration of points preserve abelianisation; the \emph{universal central extension condition}~\cite{CVdL,GrayVdL1} holds: for central extensions $f\colon A\to B$ and $g\colon B\to C$ the composite $gf$ is central provided the abelianisation of $B$ is trivial;
		\item $\C$ is \emph{strongly protomodular}~\cite{B4,Rodelo:Moore}: change-of-base functors of the fibration of points reflect normal monomorphisms;
		\item the \emph{Three Subobjects Lemma} is valid for commutators of normal subobjects: if $K,L,M$ are normal subobjects of an object $X$, then
		      \[
			      [K,[L,M]]\leq [M,[K,L]]\join [L,[M,K]].
		      \]
	\end{itemize}
	If $\C$ has split extension cores---$\C$ may, for instance, be a well-powered algebraic logos with joins of subobjects---then
	\begin{itemize}
		\item $\C$ is fibrewise algebraically cartesian closed; for each object $B$ in $\C$, the category $\act{B}\simeq\Pt{\C}{B}$ has centralisers;
		\item $\C$ has normal cores; in $\C$, joins of normal subobjects are normal.\noproof
	\end{itemize}
\end{theorem}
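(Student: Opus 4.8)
The statement is an aggregate of facts that are, individually, already available, so the plan is to reduce it to two properties — that $\C$ is algebraically coherent (for the first block of bullets) and that $\C$ has split extension cores (for the second) — and then to quote the relevant results.

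First I would record that any algebraic logos is algebraically coherent: a geometric functor preserves jointly extremally epimorphic \emph{families}, hence in particular jointly extremally epimorphic \emph{pairs}, so it is coherent, and asking this of every change-of-base functor of the fibration of points is exactly the definition of algebraic coherence from \cite{acc}. Granting this, each of the five bullets in the first block is, essentially verbatim, one of the categorical-algebraic consequences of algebraic coherence for semi-abelian categories established in \cite{acc}: preservation of Huq and Higgins commutators, of normal closures and of cokernels by the change-of-base functors; coincidence of the Huq, Higgins and Ursini/Smith commutators on normal subobjects; peri-abelianness together with the universal central extension condition; strong protomodularity; and the Three Subobjects Lemma for normal subobjects. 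So this block is handled by citing the corresponding statements of \cite{acc}, with nothing to add beyond the implication just noted.

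For the second block the hypothesis is that $\C$ has split extension cores; by Theorem~\ref{Thm Split Ext core vs alg logos}(2) this holds, in particular, when $\C$ is a well-powered algebraic logos with joins of subobjects (and, more generally, for a well-powered homological category that is complete or has small sums), which justifies the parenthetical remark. The first bullet — fibrewise algebraic cartesian closedness, and hence centralisers in each $\act{B}\simeq\Pt{\C}{B}$, and in $\C\simeq\act{0}$ — is precisely Theorem~\ref{Theorem (FWACC)}. For the second bullet I would invoke Proposition~\ref{Proposition Normal core vs. split extension core}, which (via Lemma~\ref{Lemma Clots}, a normal core being an instance of a split extension core) yields normal cores straight from split extension cores; then Proposition~\ref{Proposition Normal cores vs joins}, using well-poweredness and joins, turns ``$\C$ has normal cores'' into ``joins of normal subobjects are normal''. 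When all joins are not assumed to exist, the same argument shows that whatever join of normal subobjects does exist must coincide with its own normal core and is therefore normal.

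I do not expect a genuine obstacle here, since the real work has been done elsewhere; the points to watch are bookkeeping ones. The chief one is to confirm that each result imported from \cite{acc} is stated for, or specialises trivially to, semi-abelian categories and does not covertly use a hypothesis that the algebraic logos condition fails to supply — in particular that it does not rely on finite limits commuting with directed colimits, the very point contrasted with Theorem~6.27 of \cite{acc} in the remark following the statement. The only other subtlety is the reading of ``joins of normal subobjects are normal'' in a context where joins need not all exist, which is dealt with as above; everything else is a direct appeal to Theorems~\ref{Theorem (FWACC)} and~\ref{Thm Split Ext core vs alg logos} and Propositions~\ref{Proposition Normal core vs. split extension core} and~\ref{Proposition Normal cores vs joins}.
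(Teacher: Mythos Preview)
Your proposal is correct and matches the paper's own treatment: the theorem is stated with \verb|\noproof|, the preceding sentence merely observing that every algebraic logos is algebraically coherent and that the result combines the consequences listed in~\cite{acc} with Theorems~\ref{Thm Split Ext core vs alg logos} and~\ref{Theorem (FWACC)} and Propositions~\ref{Proposition Normal core vs. split extension core} and~\ref{Proposition Normal cores vs joins} from the present paper. Your handling of the ``joins of normal subobjects are normal'' clause without a blanket existence-of-joins hypothesis is the right reading and is exactly the converse direction of the proof of Proposition~\ref{Proposition Normal cores vs joins}.
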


\section{Examples and stability properties}\label{Section Examples}
It is well known~\cite[1.711]{Alligators} that \emph{any cocomplete locally cartesian closed category is geometric}. We find the following algebraic version of this classical result.
We recall from~\cite{Gray2012, Bourn-Gray} that a finitely complete category $\C$ is said to be \defn{locally algebraically cartesian closed} (satisfies condition \defn{(LACC)}) when, for every $f\colon X \to Y$ in \C, the change-of-base functor $f^*\colon \Pt{\C}{Y}\to \Pt{\C}{X}$ is a left adjoint.

\begin{theorem}\label{Theorem (LACC) implies (AL)}
	\begin{enumerate}
		\item Any \LACC\ category is an algebraic logos;
		\item any pointed \LACC\ category has split extension cores.
	\end{enumerate}
\end{theorem}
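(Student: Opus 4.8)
The plan is to derive both parts from one categorical observation: \emph{a left adjoint between categories with finite limits preserves jointly extremally epimorphic families, so a finite-limit-preserving left adjoint is geometric.} First I would prove this. Let $L\dashv R$ with $L$ a left adjoint between categories with finite limits, with unit $\eta$; let $(u_i\colon U_i\to X)_{i\in I}$ be a jointly extremally epimorphic family in the domain of $L$, and suppose a monomorphism $m\colon M\to LX$ in the codomain is such that every $L(u_i)$ factors through $m$. Transposing along the adjunction, the family $(\eta_X\comp u_i)_{i\in I}$ factors through the monomorphism $R(m)\colon RM\to RLX$ (monic, since right adjoints preserve monomorphisms). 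Pulling $R(m)$ back along $\eta_X\colon X\to RLX$ yields a monomorphism $m'\colon M'\to X$ through which each $u_i$ factors; as $(u_i)_{i\in I}$ is jointly extremally epimorphic, $m'$ is an isomorphism, so $\eta_X$ itself factors through $R(m)$. Transposing back, $m$ is a split epimorphism, hence --- being monic --- an isomorphism. Together with preservation of finite limits, this makes $L$ geometric in the sense of Definition~\ref{Definition Geometric Functor}. Part (1) is then immediate: for any $f\colon X\to Y$ in a \LACC\ category $\C$, the change-of-base functor $f^*\colon\Pt{\C}{Y}\to\Pt{\C}{X}$ preserves finite limits (as all change-of-base functors in the fibration of points do) and is a left adjoint by hypothesis, hence geometric; so $\C$ is an algebraic logos.

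For part (2), $\C$ is now pointed, so $\Pt{\C}{0}\simeq\C$ and, for each object $B$, the kernel functor $\Ker\colon\Pt{\C}{B}\to\C$ is --- up to this equivalence --- the change-of-base functor along $!_B\colon{0\to B}$; by \LACC\ it has a right adjoint $R_B$. Given a split extension $(\alpha,\beta,\kappa)$ over $B$ with kernel $X$ and a monomorphism $s\colon{S\to X}$ as in~\eqref{Equation Explicit split ext core}, I claim the split extension core of $s$ is the inverse image of the subobject $R_B(s)\colon R_B(S)\to R_B(X)$ along the unit $\eta_{(\alpha,\beta)}\colon(\alpha,\beta)\to R_B(X)$ of $\Ker\dashv R_B$; equivalently, the pullback $(\alpha,\beta)\times_{R_B(X)}R_B(S)$ formed in $\Pt{\C}{B}$, whose projection onto $(\alpha,\beta)$ is monic (being a pullback of the monomorphism $R_B(s)$). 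To verify this I would use that $\Ker$ preserves that pullback, that the triangle identity gives $\epsilon^{\Ker}_X\comp\Ker(\eta_{(\alpha,\beta)})=1_X$ (where $\epsilon^{\Ker}$ is the counit of $\Ker\dashv R_B$), and that $\epsilon^{\Ker}$ is natural in $s$, to conclude that the kernel of the pullback factors through $s$; and, for the universal property, I would take an arbitrary sub-point $(\alpha',\beta')\hookrightarrow(\alpha,\beta)$ whose kernel factors through $s$, transpose that factorisation along $\Ker\dashv R_B$, and check --- by transposing once more --- that the two resulting composites into $R_B(X)$ coincide, which produces the required comparison arrow into the pullback; uniqueness is automatic since the projection from the pullback is a monomorphism. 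This also supplies the right adjoint to the functor $F$ of Proposition~\ref{Proposition Action core via adjoint} without assuming regularity.

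The conceptual core is the lemma of the first paragraph, which is short. The genuine bookkeeping lives in part (2): one must remember that, under $\Ker\dashv R_B$, arrows into $R_B(X)$ transpose to arrows \emph{out of kernels} into $X$, and keep units and counits straight; once this is organised, every identity needed is an instance of a triangle identity or of naturality. I do not anticipate trouble with the preservation of finite limits by the change-of-base functors, which is standard for the fibration of points.
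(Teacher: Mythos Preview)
Your proof is correct, and for part~(2) it is essentially identical to the paper's: both construct the split extension core as the pullback of $R_B(s)$ along the unit $\eta_{(\alpha,\beta)}$ of the adjunction $\Ker\dashv R_B$, and your verification of the universal property simply spells out what the paper leaves as ``the universal property follows''.

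For part~(1) there is a mild difference of presentation. The paper argues in two steps: in a category with pullbacks, jointly extremally epimorphic families coincide with jointly \emph{strongly} epimorphic families, and the latter are preserved by any left adjoint (citing Pumpl\"un). You instead give a direct, self-contained proof that a left adjoint between finitely complete categories preserves jointly extremally epimorphic families, by transposing the factorisation through $m$ to a factorisation of the $\eta_X\comp u_i$ through $R(m)$, pulling back along $\eta_X$, and using the triangle identity to conclude that $m$ is split epic. This buys you independence from the cited reference and avoids invoking the extremal/strong coincidence; the paper's route is shorter on the page but relies on that background fact. Either way the content is the same lemma.
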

\begin{proof}
	For the first statement we may already notice that the change-of-base functors always preserve limits. Under \LACC, they are left adjoints, which implies that they preserve jointly extremally epimorphic sets of arrows as well: indeed, extremally epimorphic families coincide with strongly epimorphic families in any category with pullbacks, and the latter are well known and easily seen to be preserved by any left adjoint functor~\cite{Pumpluen}. 
	
	The second statement is a consequence of the fact that for each initial morphism $!_B\colon 0\to B$, the functor $!^*_B$ has a right adjoint $R_B\colon \Pt{\C}{0}\to \Pt{\C}{B}$. Indeed, given a situation as in Diagram~\ref{Equation Explicit split ext core}, the morphism $s\colon S\to !^*_B(\alpha,\beta)$ in $\C$ induces a morphism of split extensions $R_B(s)\colon R_B(S)\to R_B(!^*_B(\alpha,\beta))$, and the needed $v\colon (\bar\alpha,\bar\beta)\to (\alpha,\beta)$ is the pullback of this $R_B(s)$ along the component $\eta_{(\alpha,\beta)}\colon (\alpha,\beta)\to R_B(!^*_B(\alpha,\beta))$ at $(\alpha,\beta)$ of the unit of the adjunction. The universal property follows.
\end{proof}

\begin{example}
	The category of cocommutative Hopf algebras over a field is semi-abelian as explained in~\cite{GSV}. It is also locally algebraically cartesian closed by Proposition~5.3 in~\cite{Gray2012}, being the category of internal groups in the category of cocommutative coalgebras, which is cartesian closed as shown in~\cite[Theorem~5.3]{Barr-Coalgebras}. Finally, being locally presentable~\cite{Porst-CoLimits}, it is complete and cocomplete, so that small joins exist.
\end{example}

\begin{example}
	An abelian category, being a locally algebraically cartesian closed semi-abelian category, is always an algebraic logos.
\end{example}

\begin{problem}
We do not know---see also Theorem 4.11 in~\cite{acc}---whether the (homological) category of topological groups is an algebraic logos. 
\end{problem}

\begin{example}\label{Example Compact Hausdorff Algebras}
	On the other hand, the semi-abelian variety of compact Hausdorff groups is. This follows essentially from the criterion of Theorem~\ref{Theorem intersections}, together with the fact that a monomorphism in this category is the same thing as a subgroup inclusion, where the subgroup in question carries the topology induced by the codomain. Indeed, the category of compact Hausdorff spaces is a pre-topos (see~\cite[1.773]{Alligators} or \cite{CH}, for instance) so that every monomorphism in it is a regular monomorphism---i.e., an equaliser of some pair of morphisms; this implies that the topology on the domain of a monomorphism is induced by the topology on the codomain. As a consequence, the join $\bigjoin_{i\in I}U_i$ of any set of subobjects $(U_i\leq X)_{i\in I}$ is obtained as a join of subgroups, equipped with the topology induced by $X$. Hence the equality in the theorem does indeed hold. 
	
	Essentially the same argument works for arbitrary semi-abelian compact Hausdorff algebras; see~\cite{Borceux-Clementino} for further details about those. 
	
	The argument may also be modified to show that the category of profinite groups is a homological algebraically coherent category, because it may be seen as the category of internal groups in the category of Stone spaces, which is known to be a non-exact coherent category~\cite{CH}.
	
	This result may be extended in a different direction as well: it is an instance of a category of internal groups in a geometric category; in Section~\ref{Section Internal Groups} we treat those in detail, proving that such categories are always algebraic logoi.
\end{example}

Corollary~\ref{Cor Split Ext Core in Variety} provides another class of examples:

\begin{example}
	\emph{Orzech categories of interest}~\cite{Orzech} are algebraic logoi, since they are algebraically coherent categories~\cite{acc}. In particular, semi-abelian varieties of algebras which are algebraic logoi include: the categories of groups; non-unital (Boolean) rings; associative algebras, Lie algebras, Leibniz algebras, Poisson algebras over a commutative ring with unit; all \emph{varieties of groups} in the sense of~\cite{Neumann}.
\end{example}

As we shall see now, sometimes the converse holds. Let $\K$ be a field. A \defn{(non-associative) algebra $(A,\cdot)$ over $\K$} is a $\K$-vector space~$A$ equipped with a bilinear operation ${\cdot\colon A\times A\to A\colon (x,y)\mapsto x\cdot y}$. These form a variety of algebras $\Alg_\K$. We call a \defn{variety of $\K$-algebras} any subvariety of $\Alg_\K$.

\begin{corollary}
	Let $\V$ be a variety of $\K$-algebras over an infinite field $\K$. Then the following conditions are equivalent:
	\begin{tfae}
		\item $\V$ is an algebraic logos;
		\item $\V$ is an \emph{Orzech category of interest}~\cite{Orzech};
		\item $\V$ is action accessible~\cite{BJ07};
		\item there exist $\lambda_{1}$, \dots, $\lambda_{16}\in \K$ such that the equations
		\begin{align*}
			z(xy)=
			\lambda_{1}y(zx) & +\lambda_{2}x(yz)+
			\lambda_{3}y(xz)+\lambda_{4}x(zy)                         \\
			                 & +\lambda_{5}(zx)y+\lambda_{6}(yz)x+
			\lambda_{7}(xz)y+\lambda_{8}(zy)x                         \\
			(xy)z=
			\lambda_{9}y(zx) & +\lambda_{10}x(yz)+
			\lambda_{11}y(xz)+\lambda_{12}x(zy)                       \\
			                 & +\lambda_{13}(zx)y+\lambda_{14}(yz)x+
			\lambda_{15}(xz)y+\lambda_{16}(zy)x
		\end{align*}
		hold in $\V$;
		\item in $\V$, split extension cores exist.
	\end{tfae}
\end{corollary}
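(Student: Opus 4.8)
The plan is to establish the chain of implications needed to close the cycle, relying heavily on results already available. The equivalence of (i), (ii), (iii) for varieties of $\K$-algebras over an infinite field is exactly the content of the classification work in~\cite{acc}; in particular the identities listed in (iv) are the concrete form of algebraic coherence for such varieties, so (i)$\Leftrightarrow$(ii)$\Leftrightarrow$(iii)$\Leftrightarrow$(iv) may be imported wholesale (it is the reason this statement is phrased as a \emph{corollary} rather than a theorem). The only genuinely new content is the inclusion of condition (v), existence of split extension cores, into this list.

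The addition of (v) to the cycle is immediate from material already proved. Any variety of $\K$-algebras is a semi-abelian variety, hence in particular a homological quasi-variety; it is well-powered and (co)complete. Thus Corollary~\ref{Cor Split Ext Core in Variety} applies verbatim and gives (i)$\Leftrightarrow$(v) (indeed it gives the equivalence of algebraic coherence, being an algebraic logos, and having split extension cores for any homological quasi-variety). So the proof is simply: ``(i)$\Leftrightarrow$(ii)$\Leftrightarrow$(iii)$\Leftrightarrow$(iv) by the results of~\cite{acc}, and (i)$\Leftrightarrow$(v) by Corollary~\ref{Cor Split Ext Core in Variety}.''

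The main point requiring care—though it is not so much an obstacle as a matter of correctly citing the literature—is making sure that the equivalence (i)$\Leftrightarrow$(ii)$\Leftrightarrow$(iii)$\Leftrightarrow$(iv) in~\cite{acc} was indeed stated for varieties of $\K$-algebras over an \emph{infinite} field (the infiniteness hypothesis is what licenses the passage from a jointly-epimorphic-pairs condition on bilinear operations to the existence of scalars $\lambda_1,\dots,\lambda_{16}$, via a Vandermonde-type argument on polynomial identities). Since $\V$ being a variety of $\K$-algebras makes it a semi-abelian variety, Theorem~\ref{Theorem Coherent iff Geometric for Varieties} already identifies (i) with algebraic coherence, so really all that needs citing is the $\K$-algebra classification of~\cite{acc} for the equivalence of algebraic coherence with (ii), (iii), (iv), after which (v) is glued on by Corollary~\ref{Cor Split Ext Core in Variety}. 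No new computation is needed; the corollary is purely an assembly of previously established facts.
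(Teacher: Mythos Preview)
Your approach is essentially the same as the paper's: reduce (i)$\Leftrightarrow$(v) to the material already established in this article (the paper invokes Theorem~\ref{Main theorem}, you invoke Corollary~\ref{Cor Split Ext Core in Variety}, which is an equally valid and arguably more direct route for varieties), then bridge (i) to algebraic coherence via Theorem~\ref{Theorem Coherent iff Geometric for Varieties}, and finally import the $\K$-algebra classification from the literature for the equivalence of algebraic coherence with (ii), (iii), (iv).

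The one point to correct is the citation you flag yourself as ``requiring care'': the equivalence of algebraic coherence with (ii), (iii), (iv) for varieties of $\K$-algebras over an infinite field is \emph{not} in~\cite{acc}. The identities in (iv) and their equivalence with the other conditions come from the later papers~\cite{Edinburgh} and~\cite{GM-VdL2}, and that is exactly what the paper cites. The paper~\cite{acc} develops the general theory of algebraic coherence but does not contain this particular $\K$-algebra characterisation. So your doubt was well placed; replace the reference to~\cite{acc} with~\cite{Edinburgh,GM-VdL2} and the argument is complete and matches the paper's.
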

\begin{proof}
	This combines Theorem~\ref{Main theorem}, Theorem~\ref{Theorem Coherent iff Geometric for Varieties}, and results of~\cite{Edinburgh,GM-VdL2}.
\end{proof}

As for algebraically coherent categories, recycling the proofs given in~\cite{acc}, we find:

\begin{proposition}
	If $\C$ is a semi-abelian algebraic logos and $X$ is an object of $\C$, then the category $\Act_{X}(\C)=\C^{X\flat (-)}$ of $X$-actions in $\C$ is a semi-abelian algebraic logos.
\end{proposition}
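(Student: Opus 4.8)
The plan is to reduce the claim to results already established in the paper by passing through the equivalence of categories $\Act_X(\C)=\C^{X\flat(-)}\simeq\Pt{\C}{X}$ recalled in Section~\ref{Section Action Cores}, which is valid precisely because $\C$ is semi-abelian. Both of the properties to be checked for $\Act_X(\C)$---being semi-abelian and being an algebraic logos---are invariant under equivalence of categories: the former is defined through the existence of certain finite limits and binary coproducts together with Barr exactness, all preserved and reflected by an equivalence; the latter, by Definition~\ref{Definition algebraic logos}, refers only to finite limits and to the change-of-base functors in the fibration of points, which an equivalence likewise respects. It therefore suffices to show that $\Pt{\C}{X}$ is a semi-abelian algebraic logos.

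For the semi-abelian part I would invoke the well-known fact that every fibre of the fibration of points of a semi-abelian category is again semi-abelian, and simply cite it (see \cite{Borceux-Bourn} and the semidirect-product literature \cite{Bourn-Janelidze:Semidirect,BJK}): $\Pt{\C}{X}$ is pointed with zero object the identity point on $X$; the kernel functor $\Ker\colon\Pt{\C}{X}\to\C$ is conservative and creates finite limits and regular epimorphisms, so Barr exactness and protomodularity descend from $\C$; and $\Pt{\C}{X}$, being monadic over $\C$, which admits coequalisers of reflexive pairs, has binary coproducts. For the algebraic-logos part there is nothing left to do: it is exactly the content of Corollary~\ref{Corollary Fibres}. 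Transporting both facts along $\Act_X(\C)\simeq\Pt{\C}{X}$ completes the argument.

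I do not expect a genuine obstacle; the whole statement is essentially a repackaging of Corollary~\ref{Corollary Fibres} together with the action--split-extension equivalence. The only points requiring care are bookkeeping: verifying that this equivalence genuinely identifies the fibration-of-points structures on the two sides (so that ``algebraic logos'' really does transfer), and fixing a precise reference for the semi-abelianness of $\Pt{\C}{X}$. Should one wish to bypass the equivalence and argue directly in the monadic picture, exactly as the corresponding result is handled in \cite{acc}, one would re-run those proofs with $\C^{X\flat(-)}$ in place of $\C$; but the route sketched here is shorter and merely reuses the stability machinery of Section~\ref{Section Algebraic Logoi}.
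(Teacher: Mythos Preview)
Your proposal is correct and follows essentially the same route as the paper: reduce to $\Pt{\C}{X}$ via the equivalence $\Act_X(\C)\simeq\Pt{\C}{X}$, invoke Corollary~\ref{Corollary Fibres} for the algebraic-logos part, and cite standard results for semi-abelianness of the fibre. The paper's own proof is a one-line version of exactly this argument.
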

\begin{proof}
	This follows from Corollary~\ref{Corollary Fibres} via the equivalence between $X$-actions and points over $X$.
\end{proof}

\begin{proposition}\label{Proposition XMod}
	If $\C$ is an exact Mal'tsev algebraic logos, then the category $\Cat(\C)$ of internal categories (=~internal groupoids) in $\C$ is an algebraic logos. As a consequence, the category $\Eq(\C)$ of (effective) equivalence relations in $\C$ is an algebraic logos.
	
	If, moreover, $\C$ is semi-abelian then, by equivalence, the categories $\PXMod(\C)$ and $\XMod(\C)$ of internal (pre)crossed modules in $\C$ are algebraic logoi.
\end{proposition}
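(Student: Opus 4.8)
The plan is to realise $\Cat(\C)$ as a full subcategory of the category $\RG(\C)$ of reflexive graphs in $\C$ and then to invoke Propositions~\ref{Proposition Functor Categories} and~\ref{Proposition Reflective Subcat}. Since $\C$ is Mal'tsev, every internal category in $\C$ is an internal groupoid, a reflexive graph carries at most one internal groupoid structure, and morphisms of internal groupoids are exactly morphisms of the underlying reflexive graphs; hence $\Cat(\C)=\Gpd(\C)$ is, up to isomorphism, the full subcategory of $\RG(\C)$ determined by the reflexive graphs that underlie an internal groupoid. By Proposition~\ref{Proposition Functor Categories}, $\RG(\C)$ is an algebraic logos, so by Proposition~\ref{Proposition Reflective Subcat} it suffices to check that this full subcategory is closed under finite products and under subobjects. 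Closure under finite products is immediate, since a product of internal groupoids formed in $\RG(\C)$ carries the obvious groupoid structure; so the real content is closure under subobjects.

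For the latter I would use the standard characterisation, valid in any exact Mal'tsev category, that a reflexive graph with structure maps $d_0,d_1\colon X_1\to X_0$ and $s_0\colon X_0\to X_1$ underlies an internal groupoid precisely when the kernel pairs $R[d_0]$ and $R[d_1]$, regarded as equivalence relations on $X_1$, centralise one another---equivalently, admit a connector, equivalently their Smith commutator is the discrete equivalence relation $\Delta_{X_1}$. A monomorphism $(m_1,m_0)\colon G\to H$ in $\RG(\C)$ has both components monic, because limits, and hence kernel pairs, in $\RG(\C)$ are computed pointwise; consequently $R[d_0^G]$ and $R[d_1^G]$ are the restrictions along $m_1$ of $R[d_0^H]$ and $R[d_1^H]$. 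By the standard monotonicity of the Smith commutator under restriction along a monomorphism, $[R[d_0^G],R[d_1^G]]$ is contained in the restriction along $m_1$ of $[R[d_0^H],R[d_1^H]]$. If $H$ is a groupoid this last commutator is $\Delta_{H_1}$, whose restriction along the monomorphism $m_1$ is $\Delta_{G_1}$; hence $[R[d_0^G],R[d_1^G]]=\Delta_{G_1}$ and $G$ underlies an internal groupoid. This yields the closure under subobjects, so $\Cat(\C)=\Gpd(\C)$ is an algebraic logos.

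For $\Eq(\C)$ I would observe that the (effective) equivalence relations constitute the full subcategory of $\RG(\C)$---equivalently, of $\Gpd(\C)$---on those reflexive graphs for which $\langle d_0,d_1\rangle\colon X_1\to X_0\times X_0$ is a monomorphism: in a Mal'tsev category every reflexive relation is an equivalence relation, and in an exact category it is effective. This subcategory is closed under finite products, and it is closed under subobjects: given a monomorphism $(m_1,m_0)\colon G\to H$ with $\langle d_0^H,d_1^H\rangle$ monic, the identity $(m_0\times m_0)\comp\langle d_0^G,d_1^G\rangle=\langle d_0^H,d_1^H\rangle\comp m_1$ exhibits $\langle d_0^G,d_1^G\rangle$ as a factor of a composite of monomorphisms, whence it is monic. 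Proposition~\ref{Proposition Reflective Subcat} then applies once more.

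Finally, if $\C$ is semi-abelian then it is in particular exact Mal'tsev, so $\Cat(\C)=\Gpd(\C)$ is an algebraic logos, as is $\RG(\C)$ by Proposition~\ref{Proposition Functor Categories}. Janelidze's equivalence between internal groupoids and internal crossed modules, together with its precrossed counterpart identifying reflexive graphs with internal precrossed modules, gives $\XMod(\C)\simeq\Gpd(\C)$ and $\PXMod(\C)\simeq\RG(\C)$; and since being an algebraic logos is a property of the fibration of points, hence invariant under equivalence of categories, both $\XMod(\C)$ and $\PXMod(\C)$ are algebraic logoi. The step I expect to be the main obstacle is the closure of $\Gpd(\C)$ under subobjects in $\RG(\C)$: it fails in general---already in $\mathsf{Set}$, a sub-reflexive-graph of an internal category need not be an internal category---so the argument cannot bypass the commutator criterion for internal groupoids and the behaviour of the Smith commutator under restriction along monomorphisms, which is exactly where the exact Mal'tsev hypothesis is used.
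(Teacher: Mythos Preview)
Your proof is correct and follows the same overall architecture as the paper's: both reduce the claim to Proposition~\ref{Proposition Functor Categories} (for $\RG(\C)$) and Proposition~\ref{Proposition Reflective Subcat} (for $\Cat(\C)$ and $\Eq(\C)$), and both finish via Janelidze's equivalences. The difference lies in \emph{which} clause of Proposition~\ref{Proposition Reflective Subcat} is invoked. The paper simply cites the known fact that, in an exact Mal'tsev category, $\Cat(\C)=\Gpd(\C)$ is (regular epi)-reflective in $\RG(\C)$ and $\Eq(\C)$ is (regular epi)-reflective in $\Cat(\C)$, thereby using the second sentence of the proposition. You instead verify the first sentence directly, establishing closure under subobjects by means of the Pedicchio characterisation of internal groupoids via $[R[d_0],R[d_1]]=\Delta$ together with stability of connectors (equivalently, of Smith-centrality) under inverse image along a monomorphism. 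Your route is more self-contained---it does not import the reflectivity theorem---but it trades that for two non-trivial ingredients (the commutator characterisation of groupoids and the restriction property of connectors) that, while indeed standard in the Mal'tsev literature, are no lighter than the reflectivity statement itself. The paper's route is shorter and makes the dependence on the exact Mal'tsev hypothesis equally transparent, since that is precisely what guarantees the (regular epi)-reflectivity. Your identification $\PXMod(\C)\simeq\RG(\C)$ is correct and is part of what the paper subsumes under ``the results of~\cite{Janelidze}''.
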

\begin{proof}
	The first statement follows from Proposition~\ref{Proposition Functor Categories}. We now assume that $\C$ is exact Mal'tsev. Since the category of internal categories of~$\C$ is (regular epi)-reflective in $\RG(\C)$, we have that $\Cat(\C)$ is an algebraic logos by Proposition~\ref{Proposition Reflective Subcat}. In turn, following~\cite{Gran:Central-Extensions, Bourn-comprehensive}, we see that the category $\Eq(\C)$ is (regular epi)-reflective in $\Cat(\C)$. The final claim in the semi-abelian context now follows from the results of~\cite{Janelidze}.
\end{proof}

\begin{examples}
	Crossed modules (of groups, rings, Lie algebras, etc.); $n$-cat-groups, for all $n$~\cite{Loday}.
\end{examples}

\begin{proposition}
	If $\C$ is an exact Mal'tsev algebraic logos, then the full subcategory $\Ext(\C)$ of $\Arr(\C)$ determined by the extensions (=~regular epimorphisms), and the category $\CExt_{\B}(\C)$ of $\B$-central extensions~\cite{Janelidze-Kelly} in $\C$, for any Birkhoff subcategory~$\B$ of $\C$, are algebraic logoi.
\end{proposition}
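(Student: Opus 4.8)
The plan is to treat the two categories in turn, each time reducing to a result already established above. For $\Ext(\C)$ I would exploit that, $\C$ being Barr-exact and Mal'tsev, the passage to kernel pairs is an equivalence of categories $\Ext(\C)\simeq\Eq(\C)$, a pseudo-inverse being the passage to coequalisers: in a Barr-exact category every regular epimorphism is the coequaliser of its kernel pair and every equivalence relation is effective, while a morphism of extensions $(u,v)\colon f\to g$ corresponds to the internal functor between equivalence relations whose object part is $u$ (the inclusion $(u\times u)(R[f])\leq R[g]$ being automatic from $g\circ u=v\circ f$, and $v$ being recovered from $u$ by descent since $f$ is a regular epimorphism). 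Being an algebraic logos is clearly stable under equivalence of categories, and $\Eq(\C)$ is an algebraic logos by Proposition~\ref{Proposition XMod}; hence so is $\Ext(\C)$.

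For $\CExt_{\B}(\C)$ I would observe that it is a full subcategory of $\Ext(\C)$, which has just been shown to be an algebraic logos, so by the first statement of Proposition~\ref{Proposition Reflective Subcat} it suffices to check that $\CExt_{\B}(\C)$ is closed in $\Ext(\C)$ under finite products and under subobjects. Finite products (including the empty one) in $\Ext(\C)$ are computed as in $\Arr(\C)$, hence componentwise; the identity on the terminal object is a (trivially) $\B$-central extension, and a product $f\times g$ of $\B$-central extensions is again $\B$-central---one of the standard stability properties of this class. A subobject of a $\B$-central extension $g\colon A\to B$ in $\Ext(\C)$ is the same as an extension $f\colon X\to Y$ together with a morphism of extensions $(u,v)\colon f\to g$ in which $u$ is a monomorphism, and the verification amounts to showing that such an $f$ is again $\B$-central.

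This last point is the one requiring real work; everything else is bookkeeping. I would argue it through the commutator characterisation of $\B$-centrality available in an exact Mal'tsev category: $f$ is a $\B$-central extension precisely when the relative commutator $[R[f],\nabla_X]_{\B}$ is trivial. Since $u$ is a monomorphism and $(u,v)$ is a morphism of extensions, $R[f]$ is contained in the restriction $(u\times u)^{-1}(R[g])$ of $R[g]$ along $u$, whence $[R[f],\nabla_X]_{\B}\leq[(u\times u)^{-1}(R[g]),\nabla_X]_{\B}\leq(u\times u)^{-1}[R[g],\nabla_A]_{\B}=(u\times u)^{-1}(\Delta_A)=\Delta_X$; here one uses monotonicity of the commutator together with its compatibility with restriction along monomorphisms, which belong to the basic calculus of commutators in exact Mal'tsev categories. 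An equivalent route avoiding the explicit commutator computation is to invoke the reflectiveness of $\CExt_{\B}(\C)$ in $\Ext(\C)$ (the centralisation functor of~\cite{Janelidze-Kelly}), whose reflection unit has regular-epimorphic components, and then to appeal to the ``(extremal epi)-reflective'' clause of Proposition~\ref{Proposition Reflective Subcat} instead. The main obstacle is thus the stability of $\B$-central extensions under subextensions; I expect it to be the only genuinely non-formal ingredient, the rest being transport of structure along $\Ext(\C)\simeq\Eq(\C)$ and an application of Proposition~\ref{Proposition Reflective Subcat}.
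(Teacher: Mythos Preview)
Your treatment of $\Ext(\C)$ matches the paper's exactly: both invoke the equivalence $\Ext(\C)\simeq\Eq(\C)$, valid in any Barr-exact category, together with Proposition~\ref{Proposition XMod}.

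For $\CExt_{\B}(\C)$, the paper takes precisely your \emph{alternative} route: it simply applies Proposition~\ref{Proposition Reflective Subcat} via the (regular epi)-reflectiveness of $\CExt_{\B}(\C)$ in $\Ext(\C)$ given by centralisation. Your primary route---checking closure under finite products and subobjects directly, with a commutator computation for the subobject case---is a detour, and not quite self-contained as written. The step
\[
[(u\times u)^{-1}(R[g]),\nabla_X]_{\B}\leq(u\times u)^{-1}[R[g],\nabla_A]_{\B}
\]
for the \emph{relative} commutator is not part of the basic Mal'tsev calculus and would itself need justification; the clean way to get it is via the fact that $\CExt_{\B}(\C)$ is a Birkhoff subcategory of $\Ext(\C)$, which is exactly the reflectiveness you were trying to avoid. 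A minor inaccuracy: a monomorphism $(u,v)$ in $\Arr(\C)$ (hence in the full subcategory $\Ext(\C)$) requires both $u$ and $v$ to be monic, not just $u$. None of this is fatal---your alternative route is correct and is what the paper does---but the commutator argument, as stated, leans on the very fact it was meant to replace.
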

\begin{proof}
	For the category $\Ext(\C)$, this is a consequence of Proposition~\ref{Proposition XMod}, by the equivalence $\Eq(\C)\simeq\Ext(\C)$ which holds in any Barr-exact category. The result for central extensions now follows from Proposition~\ref{Proposition Reflective Subcat}.
\end{proof}

\begin{examples}
	Inclusions of normal subgroups (considered as a full subcategory of~$\Arr(\Gp)$); central extensions of groups, Lie algebras, crossed modules; discrete fibrations of internal categories (considered as a full subcategory of $\Arr(\Cat(\C))$) in a semi-abelian algebraic logos $\C$~\cite[Theorem~3.2]{Gran:Central-Extensions}.
\end{examples}

\begin{proposition}
	Any sub-quasi-variety (in particular, any subvariety) of a variety which is an algebraic logos is an algebraic logos.
\end{proposition}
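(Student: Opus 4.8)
The plan is to reduce everything to Proposition~\ref{Proposition Reflective Subcat}. Let $\V$ be a variety which is an algebraic logos and let $\mathcal{W}$ be a sub-quasi-variety of $\V$, regarded as a full subcategory of $\V$. The point is that $\mathcal{W}$, being carved out inside $\V$ by a set of quasi-identities, is closed under forming arbitrary (hence in particular finite) products and under passing to substructures.

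First I would observe that in any variety of universal algebras a monomorphism is exactly an injective homomorphism, so the domain of a monomorphism with codomain in $\mathcal{W}$ is isomorphic to a subalgebra of an object of $\mathcal{W}$, and therefore lies in $\mathcal{W}$. Hence $\mathcal{W}$ is a full subcategory of $\V$ which is closed under finite products and under subobjects; as noted in the statement of Proposition~\ref{Proposition Reflective Subcat}, it is then automatically closed under all finite limits (equalisers being subobjects of products), and these are computed as in $\V$.

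Then I would simply invoke Proposition~\ref{Proposition Reflective Subcat}, whose hypotheses are now satisfied, to conclude that $\mathcal{W}$ is an algebraic logos; the special case of a subvariety follows since a subvariety of $\V$ is in particular a sub-quasi-variety. (Alternatively, one may combine Theorem~\ref{Theorem Coherent iff Geometric for Varieties} with the analogous stability property of algebraic coherence established in~\cite{acc}, but the route via Proposition~\ref{Proposition Reflective Subcat} is self-contained.) There is no real obstacle here: the only step that deserves a word of care is the identification of subobjects in a variety with substructures, which is exactly what makes ``closed under substructures'' coincide with the ``closed under subobjects'' hypothesis of Proposition~\ref{Proposition Reflective Subcat}.
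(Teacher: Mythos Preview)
Your proof is correct and follows essentially the same route as the paper: both reduce to Proposition~\ref{Proposition Reflective Subcat}. The only cosmetic difference is that the paper invokes the ``in particular'' clause of that proposition via the classical fact (cited from~\cite{Maltsev}) that a sub-quasi-variety is an (extremal epi)-reflective subcategory, whereas you verify the closure-under-products-and-subobjects hypothesis directly; either entry point works.
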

\begin{proof}
	Since any sub-quasi-variety is an (extremal epi)-reflective subcategory~\cite{Maltsev}, this follows from Proposition~\ref{Proposition Reflective Subcat}.
\end{proof}

\begin{examples}
	The varieties of $n$-nilpotent groups or $n$-solvable groups (for a fixed $n\geq 1$), rings, Lie algebras etc.; torsion-free (abelian) groups, reduced rings.
\end{examples}

\begin{remark}
	Remark 4.26 in~\cite{acc} may be strength\-ened, and thus leads to the idea (eventually to be developed in the future) that the category of monoids is a \defn{relative} algebraic logos, with respect to the \emph{fibration of Schreier points}~\cite{SchreierBook} rather than the fibration of points. In particular, all kernel functors $\Ker\colon{\SPt_{X}(\Mon)\to \Mon}$ are geometric, because they are left adjoints, as proved in~\cite{S-LACC}.
\end{remark}

\section{Internal groups in a geometric category}\label{Section Internal Groups}
We now extend Example~\ref{Example Compact Hausdorff Algebras} as announced there: we prove that the category of internal groups in a geometric category is algebraically geometric. Throughout we assume that $\C$ is a geometric category. Recall:

\begin{lemma}\label{dist}
	For each object $B$ in a geometric category $\C$, the functor 
	\[
		{B\times (-)}\colon \C\to \C\colon X\mapsto B\times X
	\]
	preserves joins.
\end{lemma}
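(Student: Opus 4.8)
The plan is to prove that $B\times(-)$ preserves joins of subobjects in a geometric category by reducing the statement to the defining property of a geometric category, namely that pullback along $B\to 1$ is a geometric functor.

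First I would observe that for any object $B$, the functor ${B\times(-)}\colon\C\to\C$ is precisely the composite of the change-of-base functor $p_B^*\colon(\C\downarrow 1)\to(\C\downarrow B)$ along the unique morphism $p_B\colon B\to 1$, with the canonical identification $\C\simeq(\C\downarrow 1)$ on one side and the forgetful functor $(\C\downarrow B)\to\C$ on the other. Since $\C$ is geometric, $p_B^*$ is a geometric functor, hence preserves finite limits and jointly extremally epimorphic small families. The identification $\C\simeq(\C\downarrow 1)$ obviously preserves and reflects such families, and the forgetful functor $(\C\downarrow B)\to\C$ does too (this is the $(\C\downarrow X)\to\C$ half of Lemma~\ref{Lemma Subobject}, or simply the fact that it is conservative and preserves kernel pairs, hence preserves and reflects monomorphisms). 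So the composite ${B\times(-)}$ preserves jointly extremally epimorphic small families; it also visibly preserves monomorphisms (being a right adjoint, or directly by pulling back). Given a family of representing monomorphisms $(u_i\colon U_i\to X)_{i\in I}$ for subobjects whose join is $X$, the family $(1_B\times u_i\colon B\times U_i\to B\times X)_{i\in I}$ consists of monomorphisms and is jointly extremally epimorphic, which by the discussion following the definition of join is exactly the statement that $\bigjoin_{i\in I}(B\times U_i)=B\times X$ inside $B\times X$. For a general subobject join $\bigjoin_{i\in I}U_i\leq X$, I would pull the whole configuration back along the inclusion $B\times\bigl(\bigjoin_{i\in I}U_i\bigr)\hookrightarrow B\times X$; since $B\times(-)$ preserves this monomorphism and the join, inside $B\times\bigl(\bigjoin_i U_i\bigr)$ the subobjects $B\times U_i$ are jointly extremally epimorphic, giving $\bigjoin_{i\in I}(B\times U_i)=B\times\bigl(\bigjoin_{i\in I}U_i\bigr)$ as required.

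The one point that needs a little care — and which I expect to be the main obstacle — is checking that the isomorphism of subobject lattices induced by $B\times(-)$ is order-preserving and compatible with the factorisations, i.e.\ that ``$(1_B\times u_i)$ jointly extremally epimorphic'' really does translate back to ``join equals $B\times X$'' rather than to some weaker statement. This is handled by the remark in the excerpt that a family of monomorphisms is jointly extremally epimorphic precisely when its join is the whole object, together with the fact that $B\times(-)$, being a geometric functor (up to the harmless forgetful identifications), sends the comparison monomorphisms $\bar u_i\colon U_i\to \bigjoin_j U_j$ to monomorphisms $1_B\times\bar u_i$ and preserves their joint extremal epimorphicity. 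No size issues arise since the indexing family is small by hypothesis and $B\times(-)$ preserves the relevant colimiting data.

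In short: ${B\times(-)}$ is, up to forgetful functors that preserve and reflect the relevant structure, the geometric change-of-base functor $p_B^*$ along $B\to 1$; geometric functors preserve jointly extremally epimorphic small families and monomorphisms; and ``preserves joins of subobjects'' is a reformulation of exactly this. I would write the argument in that order, spending essentially all the prose on the translation between the two formulations and citing Lemma~\ref{Lemma Subobject} for the behaviour of the forgetful functors.
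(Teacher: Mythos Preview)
Your proposal is correct and follows essentially the same route as the paper: factor $B\times(-)$ as the geometric change-of-base functor $!_B^{*}\colon\C\simeq(\C\downarrow 1)\to(\C\downarrow B)$ followed by the forgetful functor to $\C$, and observe that both pieces preserve the relevant structure. The paper dispatches this in two lines (``the former preserves joins by assumption, the latter trivially''), whereas you unpack the translation between joins and jointly extremally epimorphic families more explicitly; the extra case split for ``join equals $X$'' versus a general join is not needed once you know the functor preserves monomorphisms and jointly extremally epimorphic families, but it does no harm.
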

\begin{proof}
	It suffices to note that $B\times (-)$ is the composite of the change-of-base functor
	$!_B^{*}\colon \C\to (\C\downarrow B)$ along $!_B\colon B\to 1$ and the forgetful functor $(\C\downarrow B)\to \C$. The former functor preserves joins by assumption, while the latter does so trivially; hence so does their composite.
\end{proof}

\begin{lemma}\label{joins_of_subgroup}
	Let $S$ and $T$ be internal subgroups of an internal group $X$ in $\C$.
	The join of $S$ and $T$ as internal groups is the join in $\C$ of the family
	of subobjects $J_0=S\join T$, $J_{n+1} = J_n\join m(J_n\times J_n)$ where
	$m(J_n\times J_n)$ is the image of subobject $J_n\times J_n$ of $X\times X$
	along the multiplication morphism $m\colon X\times X\to X$.
\end{lemma}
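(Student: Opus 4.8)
The plan is to show that the subobject $J_\infty = \bigjoin_{n\geq 0} J_n$ described in the statement is an internal subgroup of $X$, and then that it is the smallest internal subgroup containing both $S$ and $T$; since it visibly contains $J_0 = S\join T$ and any internal subgroup containing $S$ and $T$ contains all the $J_n$ by induction, minimality is automatic once we know $J_\infty$ is an internal subgroup. So the real content is closure under multiplication and inversion.

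First I would fix the setup: $X$ is an internal group in the geometric category $\C$, with multiplication $m\colon X\times X\to X$, unit $e\colon 1\to X$ and inversion $i\colon X\to X$, and $S$, $T$ are internal subgroups, hence in particular closed under $m$, $e$ and $i$ in the appropriate internal sense. The family $(J_n)_{n\geq 0}$ is a chain $J_0\leq J_1\leq J_2\leq\cdots$ of subobjects of $X$ (monotonicity is clear since $J_{n+1}=J_n\join m(J_n\times J_n)\geq J_n$), so it is a directed family and $J_\infty$ is its join. For closure under multiplication, the key computation is
\[
	m(J_\infty\times J_\infty)\leq J_\infty.
\]
Here I would use Lemma~\ref{dist}: the functor $J_\infty\times(-)$, and likewise $(-)\times J_\infty$, preserves joins, so
\[
	J_\infty\times J_\infty = \Bigl(\bigjoin_n J_n\Bigr)\times\Bigl(\bigjoin_k J_k\Bigr) = \bigjoin_{n,k} (J_n\times J_k)\leq \bigjoin_n (J_n\times J_n),
\]
the last inequality because the $J_n$ form a chain (take $n=\max(n,k)$). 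Since $\C$ is geometric, hence regular, the image functor $m(-)$ along $m$ preserves joins of subobjects of $X\times X$ (images are left adjoint to pullback on subobject lattices, or: regular functors preserve jointly extremally epimorphic families), so
\[
	m(J_\infty\times J_\infty)\leq \bigjoin_n m(J_n\times J_n)\leq \bigjoin_n J_{n+1}\leq J_\infty.
\]
This says precisely that $J_\infty$ is closed under $m$.

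It remains to handle the unit and inversion. The unit $e\colon 1\to X$ factors through $S\leq J_0\leq J_\infty$, so that is immediate. For inversion, the cleanest route is to observe that each $J_n$ is closed under $i$: this holds for $J_0=S\join T$ because $S$ and $T$ are, using that $i$ preserves joins (it is an isomorphism, being its own inverse, so it preserves all joins trivially), and it is inherited inductively since $i\circ m = m\circ(i\times i)\circ\tau$ (with $\tau$ the symmetry of $X\times X$), so $i\bigl(m(J_n\times J_n)\bigr) = m\bigl(i(J_n)\times i(J_n)\bigr)\leq m(J_n\times J_n)$ once $i(J_n)\leq J_n$, whence $i(J_{n+1})\leq J_{n+1}$. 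Then $i(J_\infty)=i(\bigjoin_n J_n)=\bigjoin_n i(J_n)\leq\bigjoin_n J_n=J_\infty$. Therefore $J_\infty$ is an internal subgroup of $X$ containing $S$ and $T$, and by the remark above it is the join of $S$ and $T$ in the poset of internal subgroups of $X$.

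The main obstacle I anticipate is bookkeeping the passage between the internal-group-theoretic notion of ``closed under multiplication'' and the purely categorical statement $m(J_\infty\times J_\infty)\leq J_\infty$, together with verifying that the internal subgroup structure on $J_\infty$ is well-defined and uniquely determined by that of $X$ — i.e.\ that a subobject of an internal group stable under $m$, $e$, $i$ carries a (necessarily unique) internal group structure for which the inclusion is a homomorphism. In a protomodular (here, geometric, in particular regular Mal'tsev-like) setting this is routine, but it should be stated carefully; the rest is the join-preservation argument, which is exactly what Lemma~\ref{dist} and regularity of $\C$ are tailored to supply.
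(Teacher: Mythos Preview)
Your proof is correct and follows essentially the same route as the paper's: closure under $m$ via Lemma~\ref{dist} and preservation of joins by images, closure under $i$ by induction on the $J_n$ using that $i$ is an isomorphism, and minimality by a straightforward induction showing every subgroup $U\geq S,T$ contains each $J_n$. The only cosmetic difference is that you invoke the identity $i\circ m = m\circ(i\times i)\circ\tau$ explicitly, whereas the paper passes directly from $i\,m(J_n\times J_n)$ to $m(i(J_n)\times i(J_n))$---which is justified precisely because $\tau$ fixes $J_n\times J_n$.
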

\begin{proof}
	Let the $J_n$ be defined as above and let us write $J$ for their join in $\C$. Trivially, since the unit $e\colon 1 \to X$ factors through $S$, it factors through $J_0$ and hence through~$J$. On the other hand, by Lemma~\ref{dist} and the fact that regular images preserve joins, we have:
	\begin{align*}
		m(J\times J) & = m\bigl(\bigl(\bigjoin_{k\in \N} J_k\bigr)\times \bigl(\bigjoin_{l \in \N} J_l\bigr)\bigr) = m\bigl(\bigjoin_{k\in \N} \bigjoin_{l\in \N} (J_k\times J_l)\bigr) \\
		             & =\bigjoin_{k\in \N} \bigjoin_{l\in \N} m(J_k\times J_l) \leq \bigjoin_{k\in \N} \bigjoin_{l\in \N} m(J_{\max\{k,l\}}\times J_{\max\{k,l\}})                      \\
		             & \leq \bigjoin_{k\in \N} \bigjoin_{l\in \N} J_{\max\{k,l\}+1} = J
	\end{align*}
	and so $m\colon X\times X\to X$ restricts to $J$. Writing $i\colon X\to X$ for the inverse (iso)morphism we note that $i(J_0)=i(S\join T)=i(S)\join i(T) = S\join T=J_0$
	and for each $n$ in $\N$ we have $i(J_{n+1})=i(J_n\join m(J_n\times J_n))=i(J_n)\join i m(J_n\join J_n)=i(J_n)\join m(i(J_n)\times i(J_n))$. Hence by induction it follows that $i(J_{n})=J_{n}$ for each $n$ in $\N$.
	Therefore
	\begin{align*}
		i(J) = i\bigl(\bigjoin_{k\in \N} J_k\bigr) = \bigjoin_{k \in \N} i(J_k) =\bigjoin_{k\in \N} J_k = J
	\end{align*}
	and so $i\colon X\to X$ restricts to $J$. It now follows that $J$ is a subgroup of $X$. 
	
	Suppose that $U$ is a subgroup of $X$ containing $S$ and $T$. We have that $J_0 = S\join T \leq U$. If $J_n \leq U$, then $m(J_n\times J_n)\leq m(U\times U)\leq U$ and hence $J_{n+1}=J_n\join m(J_n\times J_n) \leq U$. It follows by induction that $J_n\leq U$ for all $n\in \N$ and hence $J\leq U$ as desired.
\end{proof}

\begin{lemma}\label{coherence}
	If $B$ is an internal group in a geometric category and if $S$ and~$T$ are sub-$B$-groups of a $B$-group $X$, then the join of $S$ and $T$ as subgroups of $X$ admits a $B$-group structure making it a sub-$B$-group of $X$.
\end{lemma}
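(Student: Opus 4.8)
The plan is to combine the explicit description of the join $S\join T$ of internal subgroups provided by Lemma~\ref{joins_of_subgroup} with the fact that, in a geometric category, both the functor $B\times(-)$ and regular images send joins to joins. Write $a\colon B\times X\to X$ for the action exhibiting $X$ as a $B$-group, so that $S$ and $T$ being sub-$B$-groups means exactly $a(B\times S)\leq S$ and $a(B\times T)\leq T$. Let $J_0=S\join T$ and $J_{n+1}=J_n\join m(J_n\times J_n)$ be the subobjects of $X$ introduced in Lemma~\ref{joins_of_subgroup}, and set $J=\bigjoin_{n\in\N}J_n$; by that lemma $J$ is an internal subgroup of $X$ and is the join of $S$ and $T$ in the poset of internal subgroups of $X$. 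The goal is to show that $a$ restricts along $J\to X$, thereby equipping $J$ with a $B$-action.

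First I would prove by induction on $n$ that $a(B\times J_n)\leq J$. The case $n=0$ follows from Lemma~\ref{dist} and the fact that the regular image along $a$ preserves joins: $a(B\times J_0)=a((B\times S)\join(B\times T))=a(B\times S)\join a(B\times T)\leq S\join T=J_0\leq J$. For the inductive step, assume $a(B\times J_n)\leq J$. Since $B\times(-)$ preserves binary joins and the regular image along $a$ preserves joins, $a(B\times J_{n+1})=a(B\times J_n)\join a(B\times m(J_n\times J_n))$, and the first summand lies in $J$ by hypothesis, so it remains to bound the second. Here I would use the compatibility of the action with the multiplication, namely the identity $a\comp(1_B\times m)=m\comp(a\times a)\comp\langle 1_B\times\pi_1,1_B\times\pi_2\rangle$ between morphisms $B\times(X\times X)\to X$ that is part of the definition of a $B$-group. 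Applying both sides to the subobject $B\times(J_n\times J_n)$ of $B\times(X\times X)$: the left-hand composite carries it onto $a(B\times m(J_n\times J_n))$, because $B\times(-)$ preserves the (regular epi, mono)-factorisation of $m$; the right-hand composite first maps it, via $\langle 1_B\times\pi_1,1_B\times\pi_2\rangle$, into $(B\times J_n)\times(B\times J_n)$, then via $a\times a$ into $a(B\times J_n)\times a(B\times J_n)\leq J\times J$ by the inductive hypothesis, and finally via $m$ into $m(J\times J)\leq J$ since $J$ is a subgroup. As the two composites coincide, so do the images, whence $a(B\times m(J_n\times J_n))\leq J$ and therefore $a(B\times J_{n+1})\leq J$.

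Once the induction is complete, Lemma~\ref{dist} together with preservation of joins by regular images gives $a(B\times J)=a(\bigjoin_{n\in\N}(B\times J_n))=\bigjoin_{n\in\N}a(B\times J_n)\leq J$, so $a$ restricts along the monomorphism $J\to X$ to a morphism $\varphi\colon B\times J\to J$. It then remains to verify that $\varphi$ is a $B$-action and that it makes $J$ a sub-$B$-group of $X$. This is routine: the two action axioms, the compatibility of $\varphi$ with the group structure on $J$ recorded in Lemma~\ref{joins_of_subgroup}, and the $B$-equivariance of the inclusions $S\to J$ and $T\to J$ are all equalities of morphisms with codomain $J$, so they may be checked after composing with the monomorphism $J\to X$, where they reduce to the corresponding identities for $X$.

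I expect the main obstacle to be the inductive step, and within it the two points that the left-hand composite of the distributivity square carries $B\times(J_n\times J_n)$ onto exactly $a(B\times m(J_n\times J_n))$ — which relies on $B\times(-)$ preserving image factorisations — and that the right-hand composite lands inside $J$, for which one feeds in the inductive hypothesis together with the closure of $J$ under $m$. Everything else is formal bookkeeping with joins and monomorphisms.
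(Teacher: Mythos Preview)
Your proof is correct and follows essentially the same strategy as the paper's: describe the join via the sequence $(J_n)_{n\in\N}$ of Lemma~\ref{joins_of_subgroup}, then use Lemma~\ref{dist}, preservation of joins by regular images, and the action--multiplication compatibility to show inductively that $a$ restricts. The only difference is that the paper maintains the slightly stronger invariant $a(B\times J_n)\leq J_n$ (so each $J_n$ is already a sub-$B$-group), whereas you prove $a(B\times J_n)\leq J$ and rely on $J$ being closed under $m$; both versions work.
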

\begin{proof}
	We know that the join can be constructed as $\bigjoin_{k\in \N} J_k$ where the $J_k$ are as in Lemma~\ref{joins_of_subgroup}. We follow the notation of Section~\ref{Section Action Cores} and write $a\colon B\times X\to X$ for the action morphism of $X$. We note that $S$ and $T$ being sub-$B$-groups amounts to $a(B\times S)\leq S$ and $a(B\times T)\leq T$. We have
	$a(B\times J_0) = a(B\times (S\join T)) = a((B\times S) \join (B\times T))=a(B\times S)\join a(B\times T)\leq S\join T=J_0$. Now suppose that $a(B\times J_n) \leq J_n$ for some $n\in \N$. Then
	\begin{align*}
		a(B\times J_{n+1}) & = a(B\times (J_n \join m(J_n\times J_n))) 
		=a((B\times J_n) \join (B\times m(J_n\times J_n)))                         \\
		                   & =a(B\times J_n) \join a(B\times m(J_n\times J_n))     \\ 
		                   & \leq J_n \join m(a(B\times J_n)\times a(B\times J_n)) \\
		                   & \leq J_n \join m(J_n\times J_n) 
		= J_{n+1}.
	\end{align*}
	By induction, it follows that $a(B\times J_n) \leq J_n$ for all $n \in \N$.
\end{proof}

\begin{lemma}\label{directed}
	Suppose $B$ is an internal group in a geometric category $\C$. Suppose that $(S_j)_{j\in I}$ is a directed family of sub-$B$-groups of a $B$-group $X$. The join of $S_i$ as $B$-groups is the join $\bigjoin_{j \in I} S_j$ in $\C$, equipped with the induced structure.
\end{lemma}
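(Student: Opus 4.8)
Write $S$ for the join $\bigjoin_{j\in I}S_j$ computed in $\C$. The plan is to equip $S$ with the structure inherited from $X$ and to check that this makes $S$ into a sub-$B$-group which is the join of the family $(S_j)_{j\in I}$ in the category of sub-$B$-groups of $X$. The key simplification over Lemma~\ref{joins_of_subgroup} and Lemma~\ref{coherence} is that directedness makes the iterative construction of the subobjects $J_n$ unnecessary: any product $S_j\times S_k$ lies below $S_l\times S_l$ for a common upper bound $l$ of $j$ and $k$, so the image $m(S_j\times S_k)$ is already absorbed by $S_l$.

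First I would check that $S$ is an internal subgroup of $X$. Since the empty subset of $I$ has an upper bound, $I$ is non-empty, so the unit $e\colon 1\to X$ factors through some $S_j$ and hence through $S$. For the multiplication, applying Lemma~\ref{dist} (together with the symmetry of binary products) and the fact that regular images preserve joins gives
\[
	m(S\times S) = m\Bigl(\bigjoin_{j\in I}\bigjoin_{k\in I}(S_j\times S_k)\Bigr) = \bigjoin_{j\in I}\bigjoin_{k\in I} m(S_j\times S_k),
\]
exactly as in the proof of Lemma~\ref{joins_of_subgroup}. For each pair $(j,k)$ choose $l\in I$ with $S_j\leq S_l$ and $S_k\leq S_l$; then $m(S_j\times S_k)\leq m(S_l\times S_l)\leq S_l\leq S$ because $S_l$ is a subgroup. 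Hence $m(S\times S)\leq S$, so $m$ restricts to $S$. Similarly, writing $i\colon X\to X$ for the inverse isomorphism, $i(S)=\bigjoin_{j\in I} i(S_j)=\bigjoin_{j\in I} S_j = S$, so $i$ restricts to $S$ as well, and $S$ is an internal subgroup of $X$ with the restricted structure maps.

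Next I would verify that $S$ is a sub-$B$-group: with $a\colon B\times X\to X$ the action morphism of $X$, Lemma~\ref{dist} and preservation of joins by regular images give
\[
	a(B\times S) = a\Bigl(\bigjoin_{j\in I}(B\times S_j)\Bigr) = \bigjoin_{j\in I} a(B\times S_j) \leq \bigjoin_{j\in I} S_j = S,
\]
so $a$ restricts to $S$. The internal group axioms and the action axioms for the restricted structure follow from the corresponding ones for $X$, since $S\to X$ is a monomorphism. Finally, if $U$ is any sub-$B$-group of $X$ containing every $S_j$, then $U$ contains $S$, because $S$ is the join of the $S_j$ in $\C$; since $S$ is itself a sub-$B$-group containing each $S_j$, it is the join of $(S_j)_{j\in I}$ in the category of sub-$B$-groups of $X$.

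I do not expect a serious obstacle here: the argument is a streamlined form of the proof of Lemma~\ref{joins_of_subgroup}, and the only points needing care are the bookkeeping in the iterated-join computation and the explicit appeal to directedness to discharge $m(S_j\times S_k)$ without an induction. One should also note that Lemma~\ref{dist} is being applied with several objects (namely $S$, $S_k$ and $B$) in the role of its fixed object, which is legitimate since its proof only uses that $\C$ is geometric.
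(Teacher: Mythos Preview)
Your argument is correct and follows essentially the same route as the paper: you write $S=\bigjoin_{j\in I}S_j$, use Lemma~\ref{dist} plus the directedness trick $m(S_j\times S_k)\leq m(S_l\times S_l)\leq S_l\leq S$ to show $m$, $i$ and $a$ restrict to $S$, and conclude. Your version is in fact slightly more explicit than the paper's in noting non-emptiness of $I$ and in verifying the universal property of the join among sub-$B$-groups.
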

\begin{proof}
	Let us write $S=\bigjoin_{j \in I} S_j$.
	Since for each $j\in I$ and $k\in \N$ there exists $l \in I$ such that $S_j\leq S_l$
	and $S_k\leq S_l$ it follows that $m(S_j\times S_k)\leq m(S_l\times S_l)\leq S_l\leq S$.
	Trivially, $e\colon 1\to X$ factors through $S$. On the other hand we have
	\begin{align*}
		m(S\times S) & =m\bigl(\bigjoin_{j\in I}S_i \times \bigjoin_{k\in I} S_j\bigr) = m\bigl(\bigjoin_{j\in I}\bigjoin_{k\in I}(S_i \times S_j)\bigr) \\
		             & =\bigjoin_{j\in I}\bigjoin_{k\in I}m(S_i \times S_j) 
		\leq \bigjoin_{j\in I}\bigjoin_{k\in I}S =S,
	\end{align*}
	while
	\begin{align*}
		i(S) & =i\bigl(\bigjoin_{j\in I} S_j\bigr) =\bigjoin_{j\in I} i(S_j) \leq \bigjoin_{j\in I} S_j =S,
	\end{align*}
	and
	\begin{align*}
		a(B\times S) & =a\bigl(B\times \bigjoin_{j\in I} S_j\bigr) =a\bigl(\bigjoin_{j\in I} (B\times S_j)\bigr) =\bigjoin_{j\in I} a(B\times S_j) \leq \bigjoin_{j\in I}S_j =S
	\end{align*}
	which proves that $S$ is a sub-$B$-group of $X$.
\end{proof}

The equivalence between split extensions of groups over $B$ and $B$-groups is Yoneda invariant. Hence it follows that:

\begin{proposition}\label{points=B-groups}
	If $B$ is an internal group in $\C$, then the category $\Pt{\Gp(\C)}{B}$ is
	equivalent to the category of $B$-groups in $\C$.\noproof
\end{proposition}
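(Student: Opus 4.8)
The plan is to deduce this from the classical correspondence for ordinary groups by the Yoneda-invariance argument signalled above. Recall that for an ordinary group $G$ the equivalence between $\Pt{\Gp}{G}$ and the category of $G$-groups is given, in one direction, by sending a point $(\alpha\colon A\to G,\beta)$ to the group $\Ker\alpha$ with the conjugation action $g\cdot x=\beta(g)x\beta(g)^{-1}$, and, in the other direction, by the semidirect product $(X,a)\mapsto(X\rtimes G\to G)$, where $X\rtimes G$ has underlying object $X\times G$ equipped with the usual twisted group structure. What matters here is that both functors, together with the natural isomorphisms $X\rtimes G\cong A$ and $X\cong\Ker(X\rtimes G\to G)$ witnessing that they are quasi-inverse, are assembled---by finite products and composition---from the structure morphisms of the groups involved and from pullbacks (kernels).

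First I would observe that, precisely because of this, the same formulas make sense in $\Gp(\C)$ for any finitely complete $\C$, in particular for the geometric category of this section: the object $X\rtimes B$, with underlying object $X\times B$ of $\C$ and structure morphisms built from those of $X$, $B$ and the action morphism $a\colon B\times X\to X$, is again an internal group in $\C$, and one obtains candidate functors between $\Pt{\Gp(\C)}{B}$ and the category of $B$-groups in $\C$---internal groups equipped with an internal $B$-action, as in Section~\ref{Section Action Cores} and Lemmas~\ref{coherence} and~\ref{directed}.

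Next I would check that these functors are mutually quasi-inverse. Each identity to be verified---functoriality, together with the naturality and invertibility of the two comparison morphisms---is an equation between morphisms of $\C$ built from finite-limit data and group operations; applying the representable functors $\C(T,-)$, for $T$ ranging over the objects of $\C$, turns each such identity into the corresponding---and true---identity of $\Gp$ on generalised elements, so it holds in $\C$ since the functors $\C(T,-)$ are jointly faithful. An equivalent way to run this step is to factor $\Gp(\C)$ through the full, finite-limit-preserving embedding $\Gp(\C)\hookrightarrow\Gp([\C^{\mathrm{op}},\mathsf{Set}])=[\C^{\mathrm{op}},\Gp]$; the target is semi-abelian, being a functor category into $\Gp$, so there the desired equivalence is the standard one between points over $\mathrm{y}B$ and internal $\mathrm{y}B$-actions, and it restricts to the essential image of $\Gp(\C)$ because that image is closed under the finite-limit constructions involved---here one uses that a finite limit of representable presheaves is representable, as $\C$ is finitely complete.

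The main obstacle is exactly this closure bookkeeping: making sure that the semidirect product object and the comparison isomorphisms never leave $\Gp(\C)$, i.e.\ that the classical equivalence really is ``Yoneda invariant''. Once that is granted, the equivalence between $\Pt{\Gp(\C)}{B}$ and the category of $B$-groups in $\C$ is formal, and with it the proposition.
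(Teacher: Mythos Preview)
Your proposal is correct and follows the same approach as the paper: the paper simply invokes the sentence ``The equivalence between split extensions of groups over $B$ and $B$-groups is Yoneda invariant'' and leaves the proposition with no proof, whereas you unpack precisely what that Yoneda-invariance argument amounts to (the semidirect-product construction uses only finite limits and the group operations, so it transports along representable functors, or equivalently along the Yoneda embedding into $[\C^{\mathrm{op}},\Gp]$). There is nothing to add.
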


\begin{proposition}
	The category of internal groups in a geometric category $\C$ admits split extension cores. 
\end{proposition}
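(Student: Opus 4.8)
The plan is to reduce the statement to the existence of \emph{action cores} (Section~\ref{Section Action Cores}) for internal groups, and then to build those directly. Fix an internal group $B$ in $\C$. By Proposition~\ref{points=B-groups} the fibre $\Pt{\Gp(\C)}{B}$ is equivalent to the category of $B$-groups in $\C$, in a way that carries the kernel functor $\Ker\colon\Pt{\Gp(\C)}{B}\to\Gp(\C)$ to the functor forgetting the action. Under this equivalence, a split extension over $B$ corresponds to a $B$-group $X$ in $\C$ with action morphism $a\colon B\times X\to X$; a subobject of a point over $B$ corresponds to a sub-$B$-group; and the split extension core of a monomorphism $s\colon S\to X$ corresponds to the associated action core, namely the \emph{largest sub-$B$-group of $X$ contained in $S$}. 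So it suffices to show that, for every $B$-group $X$ and every subobject $S\leq X$ in $\C$ that is closed under the group operations, such a largest sub-$B$-group exists; transporting it back along Proposition~\ref{points=B-groups} then yields the dashed part of Diagram~\eqref{Equation Explicit split ext core}, whose universal property is precisely the maximality in question (the morphisms $u$ and $v$ being automatically monic, in accordance with Lemma~\ref{Lemma mono}).

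To construct this largest sub-$B$-group I would exploit that, $\C$ being geometric, every pullback functor $f^{*}\colon\Sub{\C}{Y}\to\Sub{\C}{X}$ is a left adjoint (Remark~\ref{Remark Logos}). Applying this to the product projection $\pi_{2}\colon B\times X\to X$ gives a right adjoint $\Pi\colon\Sub{\C}{B\times X}\to\Sub{\C}{X}$ to the functor $\pi_{2}^{*}=B\times(-)$. Writing $a^{-1}(S)\leq B\times X$ for the inverse image of $S$ along $a$, I set $W\coloneq\Pi\bigl(a^{-1}(S)\bigr)\leq X$. Combining the adjunction $\pi_{2}^{*}\dashv\Pi$ with the usual adjunction between direct and inverse image along $a$, one gets, for every subobject $T\leq X$, the equivalence
\[
	T\leq W \quad\Longleftrightarrow\quad B\times T\leq a^{-1}(S)\quad\Longleftrightarrow\quad a(B\times T)\leq S .
\]
In particular, maximality is then immediate: any sub-$B$-group $T\leq X$ with $T\leq S$ satisfies $a(B\times T)\leq T\leq S$, whence $T\leq W$.

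What remains is to check that $W$ itself is a sub-$B$-group of $X$ contained in $S$. That $W\leq S$ follows by precomposing the restricted action $B\times W\hookrightarrow B\times X\xrightarrow{a}X$ with $\langle e_{B}\comp{!_{W}},1_{W}\rangle$ and using the unit axiom, which forces the inclusion $W\hookrightarrow X$ to factor through $a(B\times W)\leq S$. Closure under the group operations and $B$-invariance are verified exactly as in the proofs of Lemmas~\ref{coherence} and~\ref{directed}, by feeding the relevant subobject of $X$ into $a(B\times(-))$ and rewriting with the $B$-action axioms recalled in Section~\ref{Section Action Cores}: for example, $a\bigl(B\times m(W\times W)\bigr)=m\bigl(a(B\times W)\times a(B\times W)\bigr)\leq m(S\times S)\leq S$ gives $m(W\times W)\leq W$, while $a\bigl(B\times a(B\times W)\bigr)=a\bigl(m_{B}(B\times B)\times W\bigr)\leq a(B\times W)\leq S$ gives $a(B\times W)\leq W$; the unit and the inverse of $X$ are treated in the same way, using that $S$ is closed under them.

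I expect the main obstacle to be purely administrative: carefully translating the $B$-action axioms into pullback-stable statements about subobjects, checking the four closure properties of $W$, and making the identifications of Proposition~\ref{points=B-groups} explicit at the level of the dashed diagram of~\eqref{Equation Explicit split ext core}. I also note an alternative route, available whenever $\Gp(\C)$ is well-powered (hence in particular whenever $\C$ is): by Lemmas~\ref{coherence} and~\ref{directed}, the collection of all sub-$B$-groups of $X$ contained in $S$ is closed under binary joins and under directed joins, so---writing every join as a directed join of finite joins---it admits all joins, computed as in $\C$, and its largest member is the desired action core.
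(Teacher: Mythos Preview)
Your reduction via Proposition~\ref{points=B-groups} to the existence of a largest sub-$B$-group of $X$ contained in $S$ is exactly how the paper begins, and your closing ``alternative route'' \emph{is} the paper's proof: take the family of all sub-$B$-groups of $X$ contained in $S$, observe via Lemma~\ref{coherence} that it is directed (binary joins of sub-$B$-groups contained in $S$ remain sub-$B$-groups contained in $S$), and then invoke Lemma~\ref{directed} to see that its join in $\C$ is again a sub-$B$-group. The paper does not argue via well-poweredness and arbitrary joins as you phrase it, but directly from directedness.

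Your main route, however, is genuinely different and rather elegant. Instead of building the core as a join, you write down a closed formula $W=\Pi\bigl(a^{-1}(S)\bigr)$ using the right adjoint $\Pi$ to $\pi_2^{*}=B\times(-)$ supplied by Remark~\ref{Remark Logos}, and then verify directly that $W$ is a sub-$B$-group. This bypasses the description of subgroup joins in Lemma~\ref{joins_of_subgroup} entirely and uses the logos structure in a sharper way (the universal quantifier $\Pi$ rather than infinite distributivity). The trade-off is that the paper's approach reuses Lemmas~\ref{coherence} and~\ref{directed} wholesale, while yours has to redo the closure verifications for $W$ by hand; what you gain is an explicit one-step description of the split extension core. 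One small correction: your displayed equality $a\bigl(B\times m(W\times W)\bigr)=m\bigl(a(B\times W)\times a(B\times W)\bigr)$ is only an inclusion $\leq$ (the axiom passes through the ``diagonal'' $\langle 1_B\times\pi_1,1_B\times\pi_2\rangle$, which is not epic), but $\leq$ is all you need, and the paper's proof of Lemma~\ref{coherence} is careful to write it that way.
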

\begin{proof}
	According to Proposition~\ref{points=B-groups} it is sufficient to show that if $B$ is an internal group in $\C$, $X$ a $B$-group and $S$ a subgroup of $X$, then there is a largest sub-$B$-group of $X$ contained in $S$. Let us write $(W_j)_{j\in I}$ for the family of all sub-$B$-groups of $X$ contained in $S$. By Lemma~\ref{coherence} it follows that this is a directed family: if indeed $W_j$ and $W_k$ are sub-$B$-groups of $X$ contained in $S$, then Lemma~\ref{coherence} implies that their join as subgroups is a sub-$B$-group of $X$ contained in $S$. Lemma~\ref{directed} now implies that the join $\bigjoin_{j\in I} W_j$ in $\C$ is a sub-$B$-group of $X$ contained in $S$.
\end{proof}

\begin{theorem}\label{Theorem Internal Groups}
	The homological category of internal groups in a geometric category is always an algebraic logos.\noproof
\end{theorem}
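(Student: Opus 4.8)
The plan is to deduce the statement directly from the preceding proposition together with Theorem~\ref{Thm Split Ext core vs alg logos}. First I would check that $\Gp(\C)$ is genuinely a homological category. Since a geometric category is in particular regular, and since the category of internal groups in a regular category is again regular (factor a morphism of internal groups through its image in $\C$, using that $\C$ is regular so that $e\times e$ and $e\times e\times e$ remain regular epimorphisms, which lets one transport the group operations along the regular-epi part), $\Gp(\C)$ is regular. It is pointed, the trivial internal group being a zero object, and it is protomodular because the algebraic theory of groups is protomodular and this property passes to categories of internal models in any finitely complete category. Hence $\Gp(\C)$ is pointed, regular and protomodular, i.e.\ homological.

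Next I would invoke the preceding proposition, which asserts that $\Gp(\C)$ admits split extension cores whenever $\C$ is geometric. Finally, Theorem~\ref{Thm Split Ext core vs alg logos}\,(1) says that a pointed protomodular category with split extension cores is an algebraic logos; applying it to $\Gp(\C)$ completes the argument. Note that for this last step only pointed protomodularity and the existence of split extension cores are used, so the full strength of ``homological'' is not needed for the conclusion itself; it is recorded because it is what makes the surrounding theory applicable.

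As for where the real work lies: it has already been carried out in the chain of lemmas leading to the preceding proposition. The crucial ingredient is Lemma~\ref{joins_of_subgroup}, which computes the join of internal subgroups of $X$ as the join in $\C$ of the countable increasing chain $J_0=S\join T$, $J_{n+1}=J_n\join m(J_n\times J_n)$, and verifies, using that $B\times(-)$ preserves joins (Lemma~\ref{dist}) and that regular images preserve joins, that the resulting subobject is closed under multiplication and inversion. Lemmas~\ref{coherence} and~\ref{directed} then upgrade this to $B$-groups: binary joins and directed joins of sub-$B$-groups of a $B$-group are again sub-$B$-groups, so the family of all sub-$B$-groups of $X$ contained in a given $S$ is directed and its join in $\C$ is the desired split extension core. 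Given all that machinery, the present theorem is a short assembly of cited results, and I anticipate no further obstacle at this stage.
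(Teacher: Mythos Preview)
Your proposal is correct and follows exactly the route intended by the paper: the theorem is marked \texttt{\textbackslash noproof} precisely because it is an immediate consequence of the preceding proposition (existence of split extension cores in $\Gp(\C)$) together with Theorem~\ref{Thm Split Ext core vs alg logos}\,(1). Your additional verification that $\Gp(\C)$ is homological, and your summary of how Lemmas~\ref{dist}--\ref{directed} feed into that proposition, are accurate and simply make explicit what the paper leaves implicit.
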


\begin{remark}
	This immediately provides us with an alternative argument towards Example~\ref{Example Compact Hausdorff Algebras}. Unlike that example, though, Theorem~\ref{Theorem Internal Groups} does not immediately adapt to other types of algebras: while a version of Lemma~\ref{joins_of_subgroup} is still available in general, the interpretation of internal actions on which Lemma~\ref{coherence} depends is typical for groups.
\end{remark}

\begin{problem}
This technique does not seem to extend easily to a proof which is valid in the (weaker) coherent case, so (unlike what is claimed in Examples~4.19 of~\cite{acc}) we do not know whether the category of internal groups in a coherent category is algebraically coherent.
\end{problem}

\section*{Acknowledgements}

Thanks to the referee for careful comments and suggestions leading to the current, improved version of the text, and to George Peschke for fruitful discussions on the subject of the article. 


\end{document}